\numberwithin{equation}{section}
\theoremstyle{plain}
\newtheorem{thm}{Theorem}[section]
\newtheorem{lem}{Lemma}[section]
\theoremstyle{definition}
\newtheorem{defn}{Definition}
\newtheorem{rem}{Remark}[section]
\numberwithin{defn}{section}
\begin{document}

\title{Pointwise Bounds and Blow-up for\\ Nonlinear Fractional Parabolic Inequalities}

\author{Steven D. Taliaferro\\
Department of Mathematics\\
Texas A\&M University\\
College Station, TX 77843-3368\\
USA\\
{\tt stalia@math.tamu.edu}}

\date{}
\maketitle	

\begin{abstract}
We investigate pointwise upper bounds for nonnegative
solutions $u(x,t)$ of the nonlinear initial value problem
\begin{equation}\label{0.1}
 0\leq(\partial_t-\Delta)^\alpha u\leq u^\lambda 
\quad\text{ in }\mathbb{R}^n \times\mathbb{R},\,n\geq1,
\end{equation}
\begin{equation}\label{0.2}
 u=0\quad\text{in }\mathbb{R}^n\times(-\infty,0)
\end{equation}
where $\lambda$ and $\alpha$ are positive constants.  To do this we
first give a definition---tailored for our study of \eqref{0.1},
\eqref{0.2}---of  fractional powers of the heat operator
$(\partial_t-\Delta)^\alpha :Y\to X$ where $X$ and
$Y$ are linear spaces whose elements are real valued functions on
$\mathbb{R}^n \times\mathbb{R}$ and $0<\alpha<\alpha_0$ for some
$\alpha_0$ which depends on $n$, $X$ and $Y$.

We then obtain, when they exist, optimal pointwise upper bounds on
$\mathbb{R}^n \times(0,\infty)$ for nonnegative solutions $u\in Y$ of
the initial value problem \eqref{0.1}, \eqref{0.2} with particular
emphasis on those bounds as $t\to0^+$ and as $t\to\infty$.
\medskip 

\noindent 2010 Mathematics Subject Classification. 35B09, 35B33, 35B44, 35B45,
35K58, 35R11, 35R45.

\noindent {\it Keywords}. Blow-up, Pointwise bounds, Fractional heat
operator, Parabolic.
\end{abstract}

\iffalse

\renewcommand{\abstractname}{R\'esum\'e} \begin{abstract} Nous \'etudions
  des majorations ponctuelles pour les solutions positives $u(x,t)$ du
  probl\`eme de valeurs initiales
\begin{equation}\label{0.3}
 0\leq(\partial_t-\Delta)^\alpha u\leq u^\lambda
\quad\text{ dans }\mathbb{R}^n \times\mathbb{R},\,n\geq1,
\end{equation}
\begin{equation}\label{0.4}
 u=0\quad\text{dans }\mathbb{R}^n\times(-\infty,0)
\end{equation}
o\`u $\lambda$ et $\alpha$ sont des constantes strictement
positives. Pour ce faire nous donnons tout d'abord une
d\'efinition--adapt\'ee \`a notre \'etude de \eqref{0.3},
\eqref{0.4}--- des puissances fractionnaires de l'op\'erateur de la
chaleur $(\partial_t-\Delta)^\alpha :Y\to X$ ou $X$ et $Y$ sont des
espaces vectoriels dont les \'el\'ements sont des fonctions \`a
valeurs r\'eelles sur $ \mathbb{R}^n \times\mathbb{R}$ et
$0<\alpha<\alpha_0$ pour $\alpha_0$ d\'ependant de $n$, $X$ et $Y$.

Nous obtenons ensuite, lorsqu'elles existent, des majorations
ponctuelles optimales sur $\mathbb{R}^n \times(0,\infty)$ pour les
solutions positives $u\in Y$ du probl\`eme de valeurs initiales
\eqref{0.3}, \eqref{0.4}, et un soin tout particulier est port\'e \`a
ces majorations lorsque $t\to0^+$ et lorsque $t\to\infty$.
\end{abstract}

\fi

%\tableofcontents

\section{Introduction}\label{sec1}
In this paper we study pointwise upper bounds for nonnegative
solutions $u(x,t)$ of the nonlinear inequalities
\begin{equation}\label{1.2}
 0\leq(\partial_t-\Delta)^\alpha u\leq u^\lambda \quad\text{in }\mathbb{R}^n \times\mathbb{R},\,n\geq1,
\end{equation}
satisfying the initial condition
\begin{equation}\label{1.3}
 u=0\quad \text{in }\mathbb{R}^n \times(-\infty,0)
\end{equation}
where $\lambda$ and $\alpha$ are positive constants.

To do this, we first give in Section \ref{sec2} a definition---appropriate for our analysis of the
initial value problem \eqref{1.2}, \eqref{1.3}---of fractional powers of
the heat operator
\begin{equation}\label{1.1}
 (\partial_t-\Delta)^\alpha :Y\to X
\end{equation}
where $\Delta$ is the Laplacian with respect to $x\in\mathbb{R}^n$,
$X$ and $Y$ are linear spaces whose elements are real valued functions
on $\mathbb{R}^n \times\mathbb{R}$, and $0<\alpha<\alpha_0$ for some
$\alpha_0 >0$ which depends on $n$, $X$ and $Y$.

With the definition of \eqref{1.1} in hand, we obtain, when they exist,
optimal pointwise upper bounds on $\mathbb{R}^n\times(0,\infty)$ for
nonnegative solutions $u\in Y$ of the initial value problem
\eqref{1.2}, \eqref{1.3} with particular emphasis on these bounds as
$t\to0^+$ and as $t\to\infty$. These results are stated in Section
\ref{sec3} and proved in Section \ref{sec8}.

Since the operator \eqref{1.1} is nonlocal, we must require the
initial condition \eqref{1.3} to hold in $\mathbb{R}^n
\times(-\infty,0)$ (not just in $\mathbb{R}^n
\times \{0\}$) and nonnegative solutions of \eqref{1.2}, \eqref{1.3} may
not tend pointwise to zero as $t\to 0^+$ (see Theorem \ref{thm3.5}) even
though they satisfy the initial condition \eqref{1.3}. 

Of course any estimates we obtain for nonnegative solutions of
\eqref{1.2}, \eqref{1.3} also hold for nonnegative solutions of
the initial value problem consisting of
\eqref{1.3} and the {\it equation}
\[
 (\partial_t-\Delta)^\alpha u=u^\lambda \quad \text{in }\mathbb{R}^n \times\mathbb{R}.
\]

According to our results in Section \ref{sec3} there are essentially
only three possibilities for the solutions of \eqref{1.2},
\eqref{1.3} depending on $X$, $Y$, $\lambda$, and $\alpha$:

\begin{enumerate}
\item The only solution is $u\equiv 0$ in $\mathbb{R}^n\times\mathbb{R}$;
\item There exist sharp nonzero pointwise bounds for solutions as
  $t\to 0^+$ and as $t\to\infty$;
\item There do not exist pointwise bounds for solutions as
  $t\to 0^+$ and as $t\to\infty$.
\end{enumerate}
All possiblities can occur. For the precise statements of
possibilities (i), (ii), and (iii) see Theorem \ref{thm3.1}, Theorems
\ref{thm3.2}--\ref{thm3.4}, and Theorems \ref{thm3.5} and
\ref{thm3.6}, respectively.

The operator  \eqref{1.1} is a fully fractional heat operator as
opposed to time fractional heat operators in which the fractional
derivatives are only with respect to $t$, and space fractional heat
operators, in which the fractional derivatives are only with respect to $x$.

Some recent results for nonlinear PDEs containing time (resp. space)
fractional heat operators can be found in \cite{AV,A,ACV,DVV,K,
  KSVZ,M,OD,SS,VZ,ZS}
(resp. \cite{AABP,AMPP,BV,CVW,DS,FKRT,GW,JS,MT,PV,V,VV,VPQR}). We know
of no results for nonlinear PDEs containing the fully fractional heat
operator  \eqref{1.1}. However results for linear PDEs containing
\eqref{1.1}, including in particular 
\[
(\partial_t-\Delta)^\alpha u=f,
\]
where $f$ is a given function, can be found in \cite{ACM,NS,SK,ST}.

\section{Definition and properties of  fully fractional heat operators}\label{sec2}
In this section we give a well-motivated definition of the fully fractional heat operator \eqref{1.1}, suitable for our study of the initial value problem \eqref{1.2}, \eqref{1.3}, and then give some of its properties.

Some of the material in this section is inspired by---and can be viewed
as the parabolic analog of---the material in \cite[Sec. 5.1]{S}
concerning the fractional Laplacian.

Since for functions $u:\mathbb{R}^n \times\mathbb{R}\to\mathbb{R},\,n\geq1$, which are sufficiently smooth and small at infinity we have
%$$((\partial_t-\Delta)u)^{\wedge}(y,s)=(|y|^2 -is)\widehat{u}(y,s),$$
$$((\partial_t-\Delta)u)\ \widehat{ }\ (y,s)=(|y|^2 -is)\widehat{u}(y,s),$$
where $\ \widehat{ }\ $ is the Fourier transform operator on $\mathbb{R}^n \times\mathbb{R}$ given by 
$$\widehat{u}(y,s)=\iint_{\mathbb{R}^n \times\mathbb{R}}e^{i(y,s)\cdot(x,t)}u(x,t)\, dx \, dt,$$
the fractional heat operator $(\partial_t-\Delta)^\alpha ,\,\alpha>0$, is formally defined in \cite[Chapter 2]{SP} by
\begin{equation}\label{2.1}
 ((\partial_t-\Delta u)^\alpha u)\ \widehat{ }\ (y,s)=(|y|^2 -is)^\alpha \widehat{u}(y,s).
\end{equation}
If $f=(\partial_t-\Delta)^\alpha u$ then from \eqref{2.1} and the fact 
(see \cite[Theorem 2.2]{SP} and Theorem \ref{thm2.1}(i) below) that
$$\widehat{\Phi}_\alpha(y,s)=(|y|^2 -is)^{-\alpha}\quad\text{for }0<\alpha<(n+2)/2$$
in the sense of tempered distributions where
\begin{equation}\label{2.2}
 \Phi_\alpha (x,t)=\frac{t^{\alpha-1}}{\Gamma(\alpha)}\,
\frac{1}{(4\pi t)^{n/2}}e^{-|x|^2/(4t)}\raisebox{2pt}{$\chi$}_{(0,\infty)}(t),
\end{equation}
we formally get
$$\widehat{u}=\widehat{\Phi}_\alpha \widehat{f}.$$
Hence by the convolution theorem we formally find that
\begin{equation}\label{2.3}
 u=J_\alpha f:=\Phi_\alpha *f
\end{equation}
where $*$ is the convolution operation in $\mathbb{R}^n \times\mathbb{R}$.  Since $\Phi_\alpha (x,t)=0$ for $t\leq0$ we have
\begin{equation}\label{2.4}
 J_\alpha f(x,t)=\iint_{\mathbb{R}^n \times(-\infty,t)}\Phi_\alpha (x-\xi,t-\tau)f(\xi,\tau)\, d\xi \, d\tau .
\end{equation}

By part (ii) of the following theorem, equations \eqref{2.1} and
\eqref{2.3} are equivalent in the sense that 
\[
(J_\alpha f)\ \widehat{ }\ =(|y|^2 -is)^{-\alpha} \widehat{f}\quad\text{for }f\in
L^1 (\mathbb{R}^n \times\mathbb{R}) \text{ and } 0<\alpha<(n+2)/2
\]
in the sense of
tempered distributions.

\begin{thm}\label{thm2.1}
 Suppose $0<\alpha<(n+2)/2$.
 \begin{enumerate}
 \item[(i)] The Fourier transform of $\Phi_\alpha (x,t)$ is the
   function $(|y|^2 -is)^{-\alpha}$ in the sense that
  $$\iint_{\mathbb{R}^n \times\mathbb{R}}\Phi_\alpha (x,t)\widehat{\varphi}(x,t)\, dx \, dt=\iint_{\mathbb{R}^n \times\mathbb{R}}(|y|^2-is)^{-\alpha}\varphi(y,s)\, dy \, ds$$
  for all $\varphi\in S$ where $S$ is the Schwarz class of rapidly decreasing functions.
  \item[(ii)] The identity $(J_\alpha f)\ \widehat{ }\  (y,t)=(|y|^2 -is)^{-\alpha}\widehat{f}(y,s)$ holds in the sense that
  \begin{equation}\label{2.5}
   \iint_{\mathbb{R}^n \times\mathbb{R}}J_\alpha f(x,t)\widehat{g}(x,t)\, dx \, dt=\iint_{\mathbb{R}^n \times\mathbb{R}}(|y|^2 -is)^{-\alpha}\widehat{f}(y,s)g(y,s)\, dy \, ds
  \end{equation}
  for all $f\in L^1 (\mathbb{R}^n \times\mathbb{R})$ and all $g\in S$.
 \end{enumerate}
\end{thm}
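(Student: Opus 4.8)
The plan is to prove (i) by computing the Fourier transform of a regularized version of $\Phi_\alpha$ and then passing to a limit, and to deduce (ii) from (i) by a translation–modulation argument. For (i), the first point to notice is that $\Phi_\alpha$ itself is \emph{not} in $L^1(\mathbb{R}^n\times\mathbb{R})$: integrating out $x$ gives $\int_{\mathbb{R}^n}\Phi_\alpha(x,t)\,dx=t^{\alpha-1}/\Gamma(\alpha)$ for $t>0$, whose integral over $t\in(0,\infty)$ diverges, so its Fourier transform cannot be taken as an ordinary function and the identity must be read distributionally. To get around this I would insert the convergence factor $e^{-\epsilon t}$ and set $\Phi_\alpha^\epsilon(x,t)=e^{-\epsilon t}\Phi_\alpha(x,t)$, which now lies in $L^1$ since $\int_0^\infty e^{-\epsilon t}t^{\alpha-1}\,dt=\Gamma(\alpha)\epsilon^{-\alpha}<\infty$. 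Its Fourier transform can be computed honestly: the Gaussian integral in $x$ produces the factor $e^{-|y|^2t}$, and the remaining integral in $t$ is the classical Gamma-function representation
\[
\frac{1}{\Gamma(\alpha)}\int_0^\infty t^{\alpha-1}e^{-(|y|^2+\epsilon-is)t}\,dt=(|y|^2+\epsilon-is)^{-\alpha},
\]
valid since $\mathrm{Re}(|y|^2+\epsilon-is)=|y|^2+\epsilon>0$. Because $\Phi_\alpha^\epsilon\in L^1$, the elementary multiplication formula $\iint\Phi_\alpha^\epsilon\,\widehat\varphi=\iint\widehat{\Phi_\alpha^\epsilon}\,\varphi$ holds for every $\varphi\in S$, which is exactly the identity of (i) with $|y|^2$ replaced by $|y|^2+\epsilon$.

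It then remains to let $\epsilon\to0^+$ on both sides by dominated convergence. On the left, $|\Phi_\alpha^\epsilon\,\widehat\varphi|\le\Phi_\alpha\,|\widehat\varphi|$, and the latter is integrable because the rapid decay of $\widehat\varphi\in S$ in $t$ dominates the factor $t^{\alpha-1}$ coming from $\Phi_\alpha$. On the right I would dominate $|(|y|^2+\epsilon-is)^{-\alpha}|=((|y|^2+\epsilon)^2+s^2)^{-\alpha/2}$ by $(|y|^4+s^2)^{-\alpha/2}=|(|y|^2-is)^{-\alpha}|$, so the crux is that this majorant times $|\varphi|$ is integrable. Near the origin $(|y|^4+s^2)^{-\alpha/2}$ is homogeneous of degree $-2\alpha$ under the parabolic dilation $(y,s)\mapsto(ry,r^2s)$, whose volume element carries the weight $n+2$; hence it is locally integrable precisely when $2\alpha<n+2$, i.e.\ exactly under the standing hypothesis $0<\alpha<(n+2)/2$. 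This is the single place where the hypothesis is used, and with it dominated convergence finishes (i).

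For (ii) I would write $J_\alpha f=\Phi_\alpha*f$ explicitly as in \eqref{2.4}, use Fubini to pull the $f$-integration to the outside, and after the substitution $(x,t)\mapsto(x+\xi,t+\tau)$ reduce the inner integral to $\iint\Phi_\alpha(x,t)\,\widehat g(x+\xi,t+\tau)\,dx\,dt$. The function $(y,s)\mapsto e^{i(\xi\cdot y+\tau s)}g(y,s)$ lies in $S$ and has Fourier transform $\widehat g(\,\cdot+\xi,\cdot+\tau)$, so part (i) applied to it converts the inner integral into $\iint(|y|^2-is)^{-\alpha}e^{i(\xi\cdot y+\tau s)}g(y,s)\,dy\,ds$. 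A second application of Fubini then recognizes $\iint f(\xi,\tau)e^{i(\xi\cdot y+\tau s)}\,d\xi\,d\tau=\widehat f(y,s)$ and yields \eqref{2.5}.

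I expect the main obstacle to be analytic bookkeeping rather than any single clever step: justifying the interchange in the $\epsilon\to0^+$ passage and, in (ii), verifying the absolute integrability needed for the two applications of Fubini. For the latter the bound $|\widehat g(x+\xi,t+\tau)|\le C_N(1+|t+\tau|)^{-N}$, combined with $\int_{\mathbb{R}^n}\Phi_\alpha(x,t)\,dx=t^{\alpha-1}/\Gamma(\alpha)$ and $f\in L^1$, controls everything once $N$ is large; but the slow temporal decay of $\Phi_\alpha$ and the origin singularity of the symbol $(|y|^2-is)^{-\alpha}$ are precisely the features that make these estimates the delicate part of the argument.
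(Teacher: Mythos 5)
Your overall strategy is sound and genuinely different from the paper's. For part (i) the paper simply cites Sampson's thesis and gives no proof, whereas you give a self-contained argument by inserting the convergence factor $e^{-\epsilon t}$, computing $\widehat{\Phi^\epsilon_\alpha}(y,s)=(|y|^2+\epsilon-is)^{-\alpha}$ exactly, applying the $L^1$ multiplication formula, and passing to the limit; the domination of the symbol by $(|y|^4+s^2)^{-\alpha/2}$ together with the parabolic-scaling count showing local integrability precisely when $2\alpha<n+2$ is exactly the fact the paper records as \eqref{6.1.5}, so this part is correct and arguably more informative than the citation. For part (ii) the paper argues in two steps: first, for $f\in S$, it fixes $(x,t)$ and applies (i) to $\widehat\varphi$ where $\varphi(y,s)=f(x+y,t+s)$, obtaining a pointwise formula for $(2\pi)^{n+1}J_\alpha f(x,t)$, and then pairs with $\widehat g$; second, it approximates a general $f\in L^1$ by Schwartz functions and passes to the limit on both sides of \eqref{2.5}. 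You instead apply (i) directly to the modulated function $e^{i(\xi\cdot y+\tau s)}g(y,s)$, whose Fourier transform is $\widehat g(\cdot+\xi,\cdot+\tau)$, and use Fubini twice; this eliminates the density step entirely and treats $f\in L^1$ in one pass, which is a real simplification.

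The one place your argument is not yet airtight is the justification of the first Fubini interchange in (ii). You propose to control the inner integral by combining $\int_{\mathbb{R}^n}\Phi_\alpha(x,t)\,dx=t^{\alpha-1}/\Gamma(\alpha)$ with the temporal decay $|\widehat g(x+\xi,t+\tau)|\le C_N(1+|t+\tau|)^{-N}$. Integrating out $x$ first discards all spatial decay, and the resulting majorant $\int_0^\infty t^{\alpha-1}(1+|t+\tau|)^{-N}\,dt$ grows like $(1+|\tau|)^{\alpha-1}$ as $\tau\to-\infty$, since the integrand is of size $|\tau|^{\alpha-1}$ on a unit neighborhood of $t=-\tau$. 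For $1<\alpha<(n+2)/2$ this is unbounded in $\tau$, and a general $f\in L^1$ cannot absorb such a weight, so the domination you name does not suffice. The fix is either to keep the joint space--time decay $|\widehat g(x,t)|\le C_N(1+|x|+|t|)^{-N}$, or to do what the paper does: split $\Phi_\alpha=\Phi_\alpha\chi_{\{0<t<1\}}+\Phi_\alpha\chi_{\{t>1\}}$, note that the first piece is in $L^1$ by \eqref{6.2.5} and the second is in $L^\infty$ because $t^{\alpha-1-n/2}$ is bounded for $t>1$ exactly when $\alpha\le(n+2)/2$, and conclude $\Phi_\alpha*|\widehat g|\in L^\infty$ by Young's inequality. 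This is also a second, independent place where the hypothesis $\alpha<(n+2)/2$ enters in part (ii), beyond the local integrability of the symbol.
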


Motivated by these formal calculations, we will now define the operator $(\partial_t-\Delta)^\alpha$ as the inverse of a linear operator
\begin{equation}\label{2.6}
 J_\alpha :X\to Y
\end{equation}
where $J_\alpha$ is defined by \eqref{2.4} and \eqref{2.2} and $X$ and
$Y$ are linear spaces whose elements are functions
$f:\mathbb{R}^n \times\mathbb{R}\to\mathbb{R}$ such that the operator
\eqref{2.6} has the following properties:
\begin{enumerate}
 \item[(P1)] it makes sense because the integral in \eqref{2.4}
   defines a real valued measurable function on
   $\mathbb{R}^n\times\mathbb{R}$ for all $f\in X$, 
 \item[(P2)] it is one-to-one and onto, and
 \item[(P3)] if $u=J_\alpha f$ then $f=0$ in $\mathbb{R}^n \times(-\infty,0)$ if and only if $u=0$ in $\mathbb{R}^n \times(-\infty,0)$.
\end{enumerate}
Property (P3) will be needed to handle the initial condition \eqref{1.3}.  The domain of $J_\alpha$ is usually taken to be $L^p (\mathbb{R}^n \times\mathbb{R}),\,1\leq p<\frac{n+2}{2\alpha}$ (see \cite[Section 9.2]{SK}).  However since the region of integration
for the integral \eqref{2.4} is not $\mathbb{R}^n \times\mathbb{R}$ but rather $\mathbb{R}^n \times(-\infty,t)$, we see that more natural and less restrictive choices for the domain and range of $J_\alpha$ are
\begin{align}\label{2.9}
 X^p &:=\bigcap_{T\in\mathbb{R}}L^p (\mathbb{R}^n \times\mathbb{R}_T )\\
\label{2.10}
 Y^p_\alpha &:=J_\alpha (X^p )
\end{align}
respectively, where $\mathbb{R}_T =(-\infty,T)$.  By \eqref{2.9} we
mean $X^p$ is the set of all measurable functions
$f:\mathbb{R}^n \times\mathbb{R}\to\mathbb{R}$ such that
$$\| f\|_{L^p (\mathbb{R}^n \times\mathbb{R}_T )}<\infty\quad\text{for all }T\in\mathbb{R}.$$
The notation in \eqref{2.9} should be interpreted similarly elsewhere in this paper.

According to the following two theorems the formal operator
\begin{equation}\label{2.11}
 J_\alpha :X^p \to Y^p_\alpha ,
\end{equation}
where $X^p$ and $Y^p_\alpha$ are defined in \eqref{2.9} and \eqref{2.10}, satisfies properties (P1)--(P3) provided either 
\begin{equation}\label{2.12}
 \left(p>1 \text{ and }  0<\alpha<\frac{n+2}{2p}\right)\quad\text{or}
\quad\left(p=1 \text{ and }  0<\alpha\leq\frac{n+2}{2p}\right).
\end{equation}

When $p$ and $\alpha$ satisfy \eqref{2.12}, part (i) of the following theorem shows that the operator \eqref{2.11} satisfies (P1) and parts (ii) and (iii) give some of its properties.

\begin{thm}\label{thm2.2}
 Suppose $p$ and $\alpha$ are real numbers satisfying \eqref{2.12} and $f\in X^p$.  Then
 \begin{enumerate}
  \item[(i)] $J_\alpha f,\,J_\alpha |f|\in
    L^{p}_{\text{loc}}(\mathbb{R}^n \times\mathbb{R})$ and
  \item[(ii)] $J_\beta (J_\gamma f)=J_\alpha f$ in $L^{p}_{\text{\rm loc}}
    (\mathbb{R}^n \times\mathbb{R})$ whenever $\beta>0,\,\gamma>0$,
    and $\beta+\gamma=\alpha$.
\end{enumerate}
If in addition $\alpha>1$ then
\begin{enumerate}  
\item[(iii)] $HJ_\alpha f=J_{\alpha-1}f$ in $\mathcal{D}^\prime (\mathbb{R}^n \times\mathbb{R})$ where $H=\partial_t-\Delta$ is the heat operator.
\end{enumerate}
\end{thm}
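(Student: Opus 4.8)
The plan is to establish (i) first, since the absolute-convergence estimate it provides is exactly what powers the Fubini arguments in (ii) and (iii). For (i), fix a compact set $K\subset\mathbb{R}^n\times\mathbb{R}$ and choose $T$ with $K\subset\mathbb{R}^n\times(-\infty,T)$. Because $\Phi_\alpha$ vanishes for $t\le0$, the values of $J_\alpha f$ on $K$ depend only on $f$ in $\mathbb{R}^n\times(-\infty,T)$, so I would replace $f$ by $g:=f\,\chi_{\mathbb{R}^n\times(-\infty,T)}$, which satisfies $g\in L^p(\mathbb{R}^n\times\mathbb{R})$ with $\|g\|_{L^p(\mathbb{R}^n\times\mathbb{R})}=\|f\|_{L^p(\mathbb{R}^n\times\mathbb{R}_T)}<\infty$ and $J_\alpha f=J_\alpha g$ on $K$. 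Next I would split the kernel as $\Phi_\alpha=\Phi_\alpha\chi_{\{0<t<1\}}+\Phi_\alpha\chi_{\{t\ge1\}}=:\Phi_\alpha^{\mathrm{near}}+\Phi_\alpha^{\mathrm{far}}$. Carrying out the Gaussian integral in $x$ gives $\int_{\mathbb{R}^n}\Phi_\alpha(x,t)\,dx=t^{\alpha-1}/\Gamma(\alpha)$, so $\Phi_\alpha^{\mathrm{near}}\in L^1(\mathbb{R}^n\times\mathbb{R})$ for every $\alpha>0$ and Young's inequality yields $\Phi_\alpha^{\mathrm{near}}*g\in L^p(\mathbb{R}^n\times\mathbb{R})$. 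For the far part the same Gaussian integral reduces $\iint_{\{t\ge1\}}\Phi_\alpha(x,t)^{p'}\,dx\,dt$ to a constant multiple of $\int_1^\infty t^{\,p'(\alpha-1-n/2)+n/2}\,dt$, which is finite exactly when $\alpha<(n+2)/(2p)$; together with the analogous bound $\sup_{t\ge1}\Phi_\alpha(0,t)<\infty\iff\alpha\le(n+2)/2$ for the case $p=1$, this is precisely where condition \eqref{2.12} enters. Hölder's inequality then gives the uniform bound $|\Phi_\alpha^{\mathrm{far}}*g|\le\|\Phi_\alpha^{\mathrm{far}}\|_{L^{p'}}\|g\|_{L^p}$, so $\Phi_\alpha^{\mathrm{far}}*g\in L^\infty\subset L^p(K)$, and adding the two pieces gives $J_\alpha f\in L^p_{\mathrm{loc}}$. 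Repeating the argument with $|f|$ in place of $f$ gives $J_\alpha|f|\in L^p_{\mathrm{loc}}$, so in particular the integral \eqref{2.4} converges absolutely for a.e.\ $(x,t)$.

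For (ii) the engine is the semigroup identity $\Phi_\beta*\Phi_\gamma=\Phi_{\beta+\gamma}$ for $\beta,\gamma>0$. I would prove this by writing $\Phi_\alpha(x,t)=\frac{t^{\alpha-1}}{\Gamma(\alpha)}\Phi_1(x,t)$, performing the $\xi$-integration via the heat-semigroup property $\int_{\mathbb{R}^n}\Phi_1(x-\xi,t-\tau)\Phi_1(\xi,\tau)\,d\xi=\Phi_1(x,t)$, and evaluating the surviving $\tau$-integral $\int_0^t(t-\tau)^{\beta-1}\tau^{\gamma-1}\,d\tau$ as the Beta integral $t^{\beta+\gamma-1}\Gamma(\beta)\Gamma(\gamma)/\Gamma(\beta+\gamma)$. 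Granting this, the operator identity is formally $J_\beta(J_\gamma f)=\Phi_\beta*(\Phi_\gamma*f)=(\Phi_\beta*\Phi_\gamma)*f=J_\alpha f$, and the only substantive point is the interchange of the order of integration. Here (i) does the work: applying Tonelli to the nonnegative integrand and collapsing the inner $(\xi,\tau)$-integration by the semigroup identity turns the fourfold integral of absolute values into $J_\alpha|f|(x,t)$, which is finite a.e.; hence Fubini is justified and the identity holds a.e., that is, in $L^p_{\mathrm{loc}}$.

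For (iii), I would use that $\Phi_1$ is the Gauss--Weierstrass kernel, the fundamental solution of $H=\partial_t-\Delta$, so $H\Phi_1=\delta$ in $\mathcal{D}'(\mathbb{R}^n\times\mathbb{R})$. Part (ii) with $\beta=1$ and $\gamma=\alpha-1$ (both positive since $\alpha>1$) gives $J_\alpha f=J_1(J_{\alpha-1}f)$, so it suffices to prove $HJ_1h=h$ in $\mathcal{D}'$ for $h:=J_{\alpha-1}f$. I would verify this by testing against $\varphi\in\mathcal{D}(\mathbb{R}^n\times\mathbb{R})$: writing $\langle HJ_1h,\varphi\rangle=\iint J_1h\,(-\partial_t-\Delta)\varphi\,dx\,dt$, applying Fubini (legitimate by the absolute convergence from (i)), and identifying the inner integral $\iint\Phi_1(x-\zeta,t-\sigma)(-\partial_t-\Delta)\varphi(x,t)\,dx\,dt=\varphi(\zeta,\sigma)$ as the statement $H\Phi_1=\delta$ applied to the translated test function. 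This collapses $\langle HJ_1h,\varphi\rangle$ to $\iint h\,\varphi=\langle h,\varphi\rangle$, giving $HJ_\alpha f=J_{\alpha-1}f$.

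I expect the main obstacle to be (i): the borderline integrability of $\Phi_\alpha^{\mathrm{far}}$ is exactly what forces \eqref{2.12}, and the threshold must be tracked with care, including the inclusion of equality when $p=1$ and its exclusion when $p>1$, as well as the separate treatment of $p=1$ and $p>1$ in Hölder's inequality. By comparison the interchange-of-integration steps in (ii) and (iii) are routine once the bound $J_\alpha|f|\in L^p_{\mathrm{loc}}$ from (i) is available.
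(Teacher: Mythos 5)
Your proposal is correct and follows essentially the same route as the paper: truncate $f$ in time to land in $L^p(\mathbb{R}^n\times\mathbb{R})$, split the kernel at $t=1$ and use Young's inequality on the near part and H\"older/Young on the far part (with the separate $p=1$ and $p>1$ cases producing exactly condition \eqref{2.12}), then drive (ii) and (iii) by the semigroup identity $\Phi_\beta*\Phi_\gamma=\Phi_{\beta+\gamma}$, Tonelli/Fubini justified by the finiteness of $J_\alpha|f|$, and the fundamental-solution property of $\Phi_1$. The only cosmetic difference is that you prove the semigroup identity by a direct Chapman--Kolmogorov/Beta-integral computation rather than the paper's partial Fourier transform in $x$, and you organize (iii) as $HJ_1(J_{\alpha-1}f)=J_{\alpha-1}f$ rather than expanding the triple convolution directly; both variants are equivalent in substance.
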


\begin{rem}\label{rem2.1}
 Theorem \ref{thm2.2}(i) can be improved to $J_\alpha f\in L^{q}_{\text{loc}}(\mathbb{R}^n \times\mathbb{R})$ when 
 $$1<p<\frac{n+2}{2\alpha} \quad\text{ and }\quad  \frac{1}{q}=\frac{1}{p}-\frac{2\alpha}{n+2}.$$
 This can be seen by applying Gopala Rao \cite[Theorem 3.1]{GR} to the function $f_T$ defined in the proof of Theorem \ref{thm2.2} in Section \ref{sec6}.
\end{rem}

According to the following theorem, if $p$ and $\alpha$ satisfy \eqref{2.12} then the operator \eqref{2.11} satisfies properties (P2) and (P3) where $X^p$ and $Y^p_\alpha$ are defined by \eqref{2.9} and \eqref{2.10}.

\begin{thm}\label{thm2.3}
 Suppose $p$ and $\alpha$ are real numbers satisfying \eqref{2.12}.  Then
 \begin{enumerate}
  \item[(i)] the operator \eqref{2.11} is one-to-one and onto, and
  \item[(ii)] if
  \begin{equation}\label{2.13}
   f\in X^p  \text{ and }  T\in\mathbb{R}
  \end{equation}
  then
  $$f|_{\mathbb{R}^n \times\mathbb{R}_T} =0\quad\text{if and only if}
\quad (J_\alpha f)|_{\mathbb{R}^n \times\mathbb{R}_T}=0.$$
 \end{enumerate}
\end{thm}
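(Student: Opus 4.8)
The plan is to reduce the whole theorem to a single statement, the reverse implication in (ii): if $J_\alpha f=0$ on $\mathbb{R}^n\times\mathbb{R}_T$ then $f=0$ there. Everything else is comparatively soft. Surjectivity in (i) is immediate, since $Y^p_\alpha$ is \emph{defined} as $J_\alpha(X^p)$. The forward implication in (ii) is pure causality: because $\Phi_\alpha$ is supported in $\{t>0\}$, the value $J_\alpha f(x,t)$ with $t<T$ involves only $f(\xi,\tau)$ for $\tau<t<T$ (see \eqref{2.4}), so $f=0$ on $\mathbb{R}^n\times\mathbb{R}_T$ forces $J_\alpha f=0$ there. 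Finally, injectivity in (i) follows from the reverse implication of (ii) by letting $T\to\infty$: if $J_\alpha f=0$ on all of $\mathbb{R}^n\times\mathbb{R}$ then $f=0$ on every $\mathbb{R}^n\times\mathbb{R}_T$, hence $f\equiv0$.

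To prove the reverse implication I would first normalise and truncate. By the time-translation invariance of $\Phi_\alpha$ we may take $T=0$. Set $g:=f\chi_{\mathbb{R}^n\times(-\infty,0)}$; by the causality just noted $J_\alpha g=J_\alpha f=0$ on $\mathbb{R}^n\times(-\infty,0)$, while now $g\in L^p(\mathbb{R}^n\times\mathbb{R})$ with $\operatorname{supp}g\subseteq\{t\le0\}$, so the goal becomes $g\equiv0$. I would then rewrite $J_\alpha$ in semigroup form: using $\Phi_\alpha(x,t)=\frac{t^{\alpha-1}}{\Gamma(\alpha)}(4\pi t)^{-n/2}e^{-|x|^2/(4t)}\chi_{(0,\infty)}(t)$ one has
\[
 J_\alpha g(\cdot,t)=\frac{1}{\Gamma(\alpha)}\int_{-\infty}^{t}(t-\tau)^{\alpha-1}\,e^{(t-\tau)\Delta}g(\cdot,\tau)\,d\tau,
\]
a Riemann--Liouville fractional integral in $t$ of the heat-propagated source. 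Conceptually $u:=J_\alpha g$ thus solves the fractional heat equation $(\partial_t-\Delta)^\alpha u=g=0$ in $\{t>0\}$ with vanishing history $u=0$ in $\{t<0\}$, and the claim is a uniqueness statement for this nonlocal Cauchy problem, exactly paralleling the elementary $\alpha=1$ case where $u$ would solve the ordinary heat equation with zero initial data.

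The concrete engine for that uniqueness is a one-variable convolution argument. Taking the partial Fourier transform in $x$ and writing $a=|\eta|^2$, the vanishing of $J_\alpha g$ for $t<0$ becomes, for a.e.\ $\eta$, the homogeneous equation
\[
 \int_{-\infty}^{t}(t-\tau)^{\alpha-1}e^{-a(t-\tau)}\,\widehat{g}(\eta,\tau)\,d\tau=0\qquad\text{for all }t<0,
\]
where $\tau\mapsto\widehat{g}(\eta,\tau)$ is supported in $(-\infty,0]$. This is a Volterra convolution whose kernel $s\mapsto s^{\alpha-1}e^{-as}\chi_{(0,\infty)}(s)$ is strictly positive on $(0,\infty)$ and has Fourier--Laplace symbol $(a+i\zeta)^{-\alpha}$, which has no zeros. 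By the Titchmarsh convolution theorem---equivalently, a Paley--Wiener analytic-continuation argument on the one-sided transforms---a kernel with no interval of vanishing annihilates no nontrivial one-sidedly supported function, so $\widehat{g}(\eta,\cdot)\equiv0$ for a.e.\ $\eta$ and therefore $g\equiv0$.

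The main obstacle is precisely this last inversion, carried out in the regularity and support class forced on us by $X^p$. Two points need care. First, $g$ lies only in $L^p$ (not $L^1$) and its time support is unbounded below, so the partial transform and the interchange of integrations must be justified, and the classical Titchmarsh theorem---stated for functions supported on a bounded interval---must be reached either by exhausting with finite windows $(-L,0)$ or by replacing it with the analytic-continuation argument for the one-sided Fourier--Laplace transforms. Second, it is essential that one cannot simply take a global Fourier transform of $f$ itself: elements of $X^p$ may grow as $t\to+\infty$, which is exactly why the proof must exploit the causal (one-sided) structure of $\Phi_\alpha$ rather than global multiplier theory. The nonvanishing of the symbol $(a+i\zeta)^{-\alpha}$ is what ultimately makes the half-space vanishing propagate to $g\equiv0$.
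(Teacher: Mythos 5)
Your overall architecture matches the paper's: everything reduces to the reverse implication of (ii) (surjectivity is definitional, the forward implication is causality of $\Phi_\alpha$, and injectivity follows from (ii)), and you correctly normalise to $T=0$ and truncate to $g=f\raisebox{2pt}{$\chi$}_{\mathbb{R}^n\times(-\infty,0)}$. For $p=1$ your route is viable and genuinely different in its final step: where you invoke the Titchmarsh convolution theorem (or a Paley--Wiener argument) for the Volterra kernel $s^{\alpha-1}e^{-as}\raisebox{2pt}{$\chi$}_{(0,\infty)}(s)$, the paper instead integrates the convolution identity against $\int_t^0(\zeta-t)^{-\alpha}\cos b\zeta\,d\zeta$, uses the beta-function identity to ``undo'' the fractional integral, and concludes from the injectivity of the Fourier sine transform on $L^1(-\infty,0)$. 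Both accomplish the same one-dimensional uniqueness, and your version arguably requires less computation, though you would still need to supply Lemma \ref{lem5.3} (the interchange of the $x$-Fourier transform with $J_\alpha$), which the paper proves only for $f\in L^1$.

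The genuine gap is the case $p>1$, which your Fourier-analytic reduction cannot reach and which you do not resolve. The hypothesis \eqref{2.12} allows every $p\in(1,\infty)$ with $\alpha<\frac{n+2}{2p}$; for $p>2$ the partial Fourier transform $\widehat{g}(\cdot,\tau)$ of $g(\cdot,\tau)\in L^p(\mathbb{R}^n)$ is in general only a tempered distribution, not a function, so the statement ``for a.e.\ $\eta$ the Volterra equation holds'' does not make sense, and even for $1<p\le2$ one must justify that $\widehat{g}(\eta,\cdot)$ is locally integrable in $\tau$ for a.e.\ fixed $\eta$ before any convolution theorem applies. You flag this as a point ``needing care,'' but the fix is not a routine density or exhaustion argument: mollifying or truncating in $x$ does not commute with $J_\alpha$ in a way that lands you back in $L^1$. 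The paper sidesteps Fourier analysis entirely for $p>1$ and instead invokes the Nogin--Rubin inversion theorem (Theorem \ref{thmA.1}): writing $u=J_\alpha f_T$ with $f_T=f\raisebox{2pt}{$\chi$}_{\mathbb{R}^n\times\mathbb{R}_T}\in L^p$, the hypersingular approximations $J^{-\alpha}_\varepsilon u$ are built from the backward finite differences \eqref{A.2}, which are causal, so $u=0$ on $\mathbb{R}^n\times\mathbb{R}_T$ forces $J^{-\alpha}_\varepsilon u=0$ there, and letting $\varepsilon\to0^+$ recovers $f_T=0$ in $L^p$. Some substitute for this $L^p$ inversion machinery (together with the reduction to $0<\alpha\le1$ via Theorem \ref{thm2.2}(iii), which the paper uses to keep the $p=1$ case elementary) is needed to close your argument.
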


By the results in this section, the following definition is natural and makes sense.
\begin{defn}\label{def2.1}
 Suppose $p$ and $\alpha$ are real numbers satisfying \eqref{2.12} and $X^p$ and $Y^p_\alpha$ are defined by \eqref{2.9} and \eqref{2.10}.  Then the operator
 \begin{equation}\label{2.14}
  (\partial_t-\Delta)^\alpha :Y^p_\alpha \to X^p
 \end{equation}
 is defined to be the inverse of the operator \eqref{2.11}.
\end{defn}

\begin{rem}\label{rem2.2}
The functions $\mu_T:X^p\to\mathbb{R}$, $T\in\mathbb{R}$, defined by
$\mu_T(f)=\|f\|_{L^p(\mathbb{R}^n \times\mathbb{R}_T)}$, form a
separating family of seminorms on $X^p$ which turns $X^p$ into a
locally convex topological vector space (see for example \cite[Theorem
1.37]{R}). Thus assuming \eqref{2.12} and defining a subset $O'$ of
$Y^p_\alpha$ to be open if $O'=J_\alpha(O)$ for some open set $O\in
X^p$, we see by Theorem \ref{2.3}(i) that $Y^p_\alpha$ is also a
locally convex topological vector space and the operator \eqref{2.14}
is a homeomorphism.
\end{rem}

We conclude this section by investigating
\[\lim_{a\to0^+}(\partial_t -a^2 \Delta)^\alpha \quad\text{ and }\quad\lim_{b\to0^+}(b\partial_t -\Delta)^\alpha\]
where $\alpha>0$.

To do this we first repeat the above procedure with
$\partial_t -\Delta$ replaced with $b\partial_t -a^2 \Delta$ where $a$
and $b$ are positive constants.  The end result after defining
\begin{equation}\label{J1}
 J_{\alpha,a,b}:X^p \to Y^{p}_{\alpha,a,b}:=J_{\alpha,a,b}(X^p )
\end{equation}
by
\[J_{\alpha,a,b}f=\Phi_{\alpha,a,b}*f,\]
where $a,b,\alpha,p$ are positive constants satisfying \eqref{2.12} and
$$\Phi_{\alpha,a,b}(x,t)=\frac{1}{a^n b}\Phi_\alpha \left(\frac{x}{a},\frac{t}{b}\right),$$
is the following modified version of Definition 2.1.

\begin{defn}
  Suppose $a,b,p$ and $\alpha$ are positive constants satsfying
  \eqref{2.12} and $X^p$ 
and $Y^{p}_{\alpha,a,b}$ are
defined in \eqref{2.9} and \eqref{J1}.  Then the operator
 $$(b\partial_t -a^2 \Delta)^\alpha :Y^{p}_{\alpha,a,b}\to X^p$$
 is defined to be the inverse of the operator \eqref{J1}.
\end{defn}

The following theorem states in what sense 
\[(\partial_t -a^2 \Delta)^\alpha \to\partial^{\alpha}_{t}\quad\text{ as }a\to0^+\]
where we formally define the equation
\[\partial^{\alpha}_{t}u=f\]
to mean
\[u=J_{\alpha,0,1}f\]
where
\[(J_{\alpha,0,1}f)(x,t):=\int^{t}_{-\infty}\frac{(t-\tau)^{\alpha-1}}{\Gamma(\alpha)}f(x,\tau)\,d\tau\]
is the Riemann-Liouville integral of $f$ with respect to $t$ of order
$\alpha$ with base point $-\infty$.

\begin{thm}\label{thm2.4}
  Suppose $\alpha>0$ and
  $f:\mathbb{R}^n \times\mathbb{R}\to\mathbb{R}$ is a continuous
  function with compact support.  Then
 \[J_{\alpha,a,1}f\to J_{\alpha,0,1}f\quad\text{ as }a\to0^+\]
 uniformly on compact subsets of $\mathbb{R}^n \times\mathbb{R}$.
\end{thm}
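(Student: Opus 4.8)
The plan is to read $J_{\alpha,a,1}$ as the composition of the Riemann--Liouville time integral defining $J_{\alpha,0,1}$ with a spatial convolution against a Gaussian that concentrates to a point mass as $a\to0^+$. Unwinding the definitions, for a time-lag $s>0$ one has
\[
\Phi_{\alpha,a,1}(x,s)=\frac{s^{\alpha-1}}{\Gamma(\alpha)}\,G_a(x,s),\qquad G_a(x,s):=\frac{1}{a^n(4\pi s)^{n/2}}e^{-|x|^2/(4a^2s)},
\]
so that
\[
(J_{\alpha,a,1}f)(x,t)=\int_{-\infty}^{t}\frac{(t-\tau)^{\alpha-1}}{\Gamma(\alpha)}\Big[\int_{\mathbb{R}^n}G_a(x-\xi,t-\tau)f(\xi,\tau)\,d\xi\Big]\,d\tau,
\]
while $(J_{\alpha,0,1}f)(x,t)=\int_{-\infty}^t\frac{(t-\tau)^{\alpha-1}}{\Gamma(\alpha)}f(x,\tau)\,d\tau$. (Both integrals converge absolutely since $f$ is bounded with compact support and $\alpha>0$.) First I would record the elementary normalization $\int_{\mathbb{R}^n}G_a(\eta,s)\,d\eta=1$ for every $a,s>0$, obtained by the change of variables $\eta=au$; this identifies $G_a(\cdot,s)$ as a probability density in the space variable and lets me rewrite the difference $J_{\alpha,a,1}f-J_{\alpha,0,1}f$ as
\[
\int_{-\infty}^{t}\frac{(t-\tau)^{\alpha-1}}{\Gamma(\alpha)}\int_{\mathbb{R}^n}G_a(\eta,t-\tau)\big[f(x-\eta,\tau)-f(x,\tau)\big]\,d\eta\,d\tau .
\]

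Next I would make the scaling explicit by substituting $\eta=2a\sqrt{s}\,v$ with $s=t-\tau$, which converts $G_a(\eta,s)\,d\eta$ into the fixed Gaussian $\pi^{-n/2}e^{-|v|^2}\,dv$ and replaces the spatial increment by $f(x-2a\sqrt{s}\,v,\tau)-f(x,\tau)$. Introducing the modulus of continuity $\omega(\delta):=\sup\{|f(p)-f(q)|:|p-q|\le\delta\}$, which is nondecreasing, bounded by $2\|f\|_\infty$, and tends to $0$ as $\delta\to0^+$ because $f$ is uniformly continuous with compact support, the inner integral is bounded by $\int_{\mathbb{R}^n}\pi^{-n/2}e^{-|v|^2}\,\omega(2a\sqrt{s}\,|v|)\,dv$.

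The decisive step is to control this bound uniformly. Since $f$ has compact support in time, say in $\mathbb{R}^n\times[-M,M]$, the integrand vanishes unless $\tau\in[-M,M]$, and as $(x,t)$ ranges over a compact set $K$ with $|t|\le T_0$ the time-lag obeys $0<s=t-\tau\le T_0+M=:S_0$. Monotonicity of $\omega$ then gives $\omega(2a\sqrt{s}\,|v|)\le\omega(2a\sqrt{S_0}\,|v|)$, so the inner integral is at most $\varepsilon_a:=\int_{\mathbb{R}^n}\pi^{-n/2}e^{-|v|^2}\omega(2a\sqrt{S_0}\,|v|)\,dv$, which is independent of $(x,t)$ and of $s$ and satisfies $\varepsilon_a\to0$ as $a\to0^+$ by dominated convergence (dominant $2\|f\|_\infty\,\pi^{-n/2}e^{-|v|^2}$). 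Integrating the weight $s^{\alpha-1}/\Gamma(\alpha)$ over $s\in(0,S_0]$ (finite precisely because $\alpha>0$) yields
\[
\big|(J_{\alpha,a,1}f)(x,t)-(J_{\alpha,0,1}f)(x,t)\big|\le \varepsilon_a\,\frac{S_0^{\alpha}}{\Gamma(\alpha+1)}\qquad\text{for all }(x,t)\in K,
\]
which tends to $0$ uniformly on $K$.

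I expect the only genuine obstacle to be this uniformity: ensuring the spatial-smoothing error is simultaneously small for every admissible time-lag $s$ and every $(x,t)\in K$. This is exactly where both hypotheses on $f$ enter—compact support bounds the range of $s$ and forces uniform continuity—while the monotonicity of $\omega$ together with $s\le S_0$ is what collapses the otherwise awkward $s$-dependence into the single quantity $\varepsilon_a$.
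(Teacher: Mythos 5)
Your proof is correct and follows essentially the same route as the paper's: both exploit that the time-lag is confined to a bounded interval by the compact support of $f$, that $\int_{\mathbb{R}^n}\Phi_{\alpha,a,1}(\xi,\tau)\,d\xi=\tau^{\alpha-1}/\Gamma(\alpha)$, and that the rescaled spatial Gaussian concentrates as $a\to0^+$ while $f$ is uniformly continuous. The only difference is a matter of bookkeeping: the paper splits the spatial integral at $|\xi|=\delta$ and estimates the near part by uniform continuity and the far part by the Gaussian tail, whereas you rescale to a fixed Gaussian and apply dominated convergence to the modulus of continuity --- an interchangeable device.
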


The following theorem states in what sense
$$(b\partial_t -\Delta)^\alpha \to(-\Delta)^\alpha \quad\text{ as }b\to0^+$$
where we formally define the equation
$$(-\Delta)^\alpha u=f$$
to mean
$$u=J_{\alpha,1,0}f$$
where
\[(J_{\alpha,1,0}f)(x,t):=\frac{1}{\gamma(n,\alpha)}\int_{\mathbb{R}^n}\frac{f(y,t)\,dy}{|x-y|^{n-2\alpha}}\]
is the Riesz potential of $f$ with respect to $x$ of order $\alpha$.  Here
\begin{equation}\label{R0}
 \gamma(n,\alpha)=\frac{4^\alpha \pi^{n/2}\Gamma(\alpha)}{\Gamma(n/2-\alpha)}.
\end{equation}

\begin{thm}\label{thm2.5}
  Suppose $0<2\alpha<n$ and
  $f:\mathbb{R}^n \times\mathbb{R}\to\mathbb{R}$ is a continuous
  function with compact support.  Then
 $$J_{\alpha,1,b}f\to J_{\alpha,1,0}f\quad\text{ as }b\to0^+$$
 uniformly on compact subsets of $\mathbb{R}^n \times\mathbb{R}$.
\end{thm}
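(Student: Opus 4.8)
The plan is to reduce the statement to a dominated–convergence argument by a single rescaling of the time variable. Writing $H_\sigma(z)=\frac{\sigma^{\alpha-1}}{\Gamma(\alpha)(4\pi\sigma)^{n/2}}e^{-|z|^2/(4\sigma)}$ for $\sigma>0$ and computing $\Phi_{\alpha,1,b}$ explicitly from its definition, one finds after the substitution $s=t-\tau$ in \eqref{2.4} that
$$J_{\alpha,1,b}f(x,t)=\int_0^\infty\int_{\mathbb{R}^n}\Phi_{\alpha,1,b}(x-\xi,s)\,f(\xi,t-s)\,d\xi\,ds.$$
A further substitution $s=b\sigma$ absorbs all the powers of $b$ carried by $\Phi_{\alpha,1,b}$ and yields the clean expression
$$J_{\alpha,1,b}f(x,t)=\int_0^\infty\int_{\mathbb{R}^n}H_\sigma(x-\xi)\,f(\xi,t-b\sigma)\,d\xi\,d\sigma,$$
in which $b$ now appears only through the time argument of $f$. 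This reduction is the crux of the argument; all manipulations are justified since $f$ is continuous with compact support and the kernel is locally integrable, and everything else is estimating the difference against the $b=0$ integrand.

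Next I would identify the limit. Since $f$ is continuous, $f(\xi,t-b\sigma)\to f(\xi,t)$ as $b\to0^+$, so formally the limit of $J_{\alpha,1,b}f(x,t)$ is $\int_0^\infty\int_{\mathbb{R}^n}H_\sigma(x-\xi)f(\xi,t)\,d\xi\,d\sigma$. To see that this equals $J_{\alpha,1,0}f(x,t)$ I would evaluate the subordination integral $\int_0^\infty H_\sigma(z)\,d\sigma$ by the substitution $u=|z|^2/(4\sigma)$, which turns it into a Gamma integral and gives $\int_0^\infty H_\sigma(z)\,d\sigma=\frac{1}{\gamma(n,\alpha)}|z|^{-(n-2\alpha)}$ with $\gamma(n,\alpha)$ as in \eqref{R0}; this is exactly where the hypothesis $2\alpha<n$ enters, since the resulting factor $\Gamma(n/2-\alpha)$ must converge. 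Fubini then identifies the formal limit with the Riesz potential $J_{\alpha,1,0}f(x,t)$.

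Finally I would upgrade the pointwise limit to uniform convergence on a fixed compact set $K_0\ni(x,t)$ by splitting $\int_0^\infty=\int_0^A+\int_A^\infty$ and estimating the two pieces separately. On $[0,A]$ I would use the uniform continuity of $f$: since the time increment $b\sigma$ is at most $bA$ there, $|f(\xi,t-b\sigma)-f(\xi,t)|$ is bounded by the modulus of continuity $\omega(bA)$, while $\int_0^A\int_{|\xi|\le R}H_\sigma(x-\xi)\,d\xi\,d\sigma\le\frac{1}{\gamma(n,\alpha)}\int_{|\xi|\le R}|x-\xi|^{-(n-2\alpha)}\,d\xi$ stays bounded uniformly for $x$ in a compact set (with $R$ bounding the spatial support of $f$), so this piece is $O(\omega(bA))\to0$ for fixed $A$. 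For the tail I would bound $e^{-|x-\xi|^2/(4\sigma)}\le1$ to get $\int_A^\infty H_\sigma(x-\xi)\,d\sigma\le C\,A^{\alpha-n/2}$ uniformly in $x,\xi$, and then integrate over the compact spatial support of $f$; this tends to $0$ as $A\to\infty$ precisely because $\sigma^{\alpha-1-n/2}$ is integrable at infinity when $2\alpha<n$. Choosing $A$ large and then $b$ small makes both pieces small uniformly in $(x,t)\in K_0$. The main obstacle is this tail estimate: the support of $\sigma\mapsto f(\xi,t-b\sigma)$ drifts out to $\sigma\sim 1/b$ as $b\to0^+$, so naive domination on a fixed $\sigma$-interval fails, and one genuinely needs the integrability of the kernel at $\sigma=\infty$ supplied by $2\alpha<n$ to control the contribution from large times.
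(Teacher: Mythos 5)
Your argument is correct and is essentially the paper's proof: both rest on the subordination identity $\int_0^\infty\Phi_{\alpha,1,b}(\xi,\tau)\,d\tau=|\xi|^{2\alpha-n}/\gamma(n,\alpha)$, the uniform continuity of $f$ in time, a tail bound on the kernel mass at large times (which is exactly where $2\alpha<n$ enters), and the compact spatial support to control the $\xi$-integral against the Riesz kernel. The only differences are cosmetic: the paper splits the time integral at a fixed $\delta$ chosen from uniform continuity and lets $b\to0^+$ kill the tail via the estimate $C(b/\delta)^{n/2-\alpha}$, whereas after your rescaling $s=b\sigma$ you split at $\sigma=A$ (i.e.\ $\tau=bA$), make the tail small by choosing $A$ large uniformly in $b$, and then let $b\to0^+$ handle the near piece through $\omega(bA)$.
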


\section{Results for fully fractional initial value problems}\label{sec3}
In this section we state our results concerning pointwise bounds for nonnegative solutions
\begin{equation}\label{3.1}
 u\in Y^p_\alpha
\end{equation}
of the fully fractional initial value problem
\begin{equation}\label{3.2}
 0\leq(\partial_t-\Delta)^\alpha u\leq u^\lambda \quad\text{in } \mathbb{R}^n \times\mathbb{R},\,n\geq1,
\end{equation}
\begin{equation}\label{3.3}
 u=0\quad\text{in } \mathbb{R}^n \times(-\infty,0)
\end{equation}
where $\lambda>0$ and, as in the Definition \ref{def2.1} of the operator \eqref{2.14}, $\alpha$ and $p$ satisfy \eqref{2.12}.

\begin{rem}\label{rem3.1}
 If $\alpha$ and $p$ satisfy \eqref{2.12} and $u$ satisfies \eqref{3.1} and the first inequality in \eqref{3.2} then
 $$f:=(\partial_t-\Delta)^\alpha u\geq0\quad\text{in } \mathbb{R}^n \times\mathbb{R}$$
 and hence $u=J_\alpha f\geq0$ in $\mathbb{R}^n \times\mathbb{R}$ by
 \eqref{2.4}.  Thus the assumption that $u$ be nonnegative can be
 omitted when studying \eqref{3.1}--\eqref{3.3}.
\end{rem}

In order to state our results we first note that for each fixed
$p\geq1$ the open first quadrant of the $\lambda\alpha$-plane is the
union of the following pairwise disjoint sets.
\begin{align*}
 &A=\left\{(\lambda,\alpha):\lambda\geq1 \text{ and }  \alpha>\dfrac{n+2}{2p}\left(1-\dfrac{1}{\lambda}\right)\right\}\\
 &B=\left\{(\lambda,\alpha):0<\lambda<1 \text{ and }  \alpha>0\right\}\\
 &C=\left\{(\lambda,\alpha):\lambda>1 \text{ and }  0<\alpha<\dfrac{n+2}{2p}\left(1-\dfrac{1}{\lambda}\right)\right\}\\
 &D=\left\{(\lambda,\alpha):\lambda>1 \text{ and }  \alpha=\dfrac{n+2}{2p}\left(1-\dfrac{1}{\lambda}\right)\right\}.
\end{align*}

Note that $A$, $B$, and $C$ are two dimensional regions in the
$\lambda\alpha$-plane whereas $D$ is the curve separating $A$ and $C$.
(See Figure \ref{fig1}.) Our results in this section deal with
solutions of \eqref{3.1}--\eqref{3.3} when $(\lambda,\alpha)$ is in
$A$, $B$, or $C$. We have no results when $(\lambda,\alpha)\in D$.

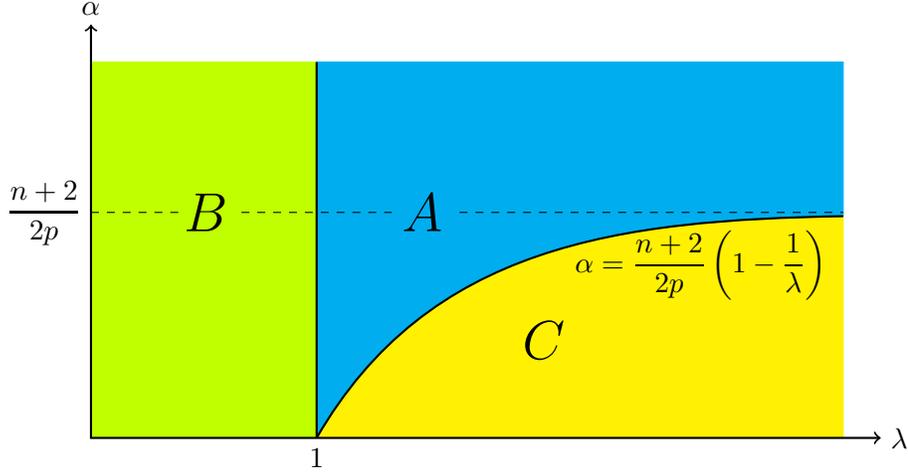
\begin{figure} 

\begin{tikzpicture}
\draw [fill=lime, lime] (0,0) rectangle (3,5);
\draw [fill=yellow, yellow] (3,0) to [out=60, in=181] (10,2.95) -- (10,0)
-- (3,0);
\draw [fill=cyan, cyan] (3,0) to [out=60, in=181] (10,2.95) -- (10,5)
-- (3,5) -- (3,0);

\draw [<->] [thick] (0,5.5) -- (0,0) -- (10.5,0);
\draw [thick] (3,0) -- (3,5);
\node [left] at (0,2.99) {$\hskip 0.6in \dfrac{n+2}{2p}$};
\node [below] at (3,0) {1};
\draw [dashed] (0,3) -- (1.2,3);
\draw [dashed] (2,3) -- (4,3);
\draw [dashed] (4.9,3) -- (10,3);

\draw [thick] (3,0) to [out=60, in=181] (10,2.95);
\node [right] at (6.3,2.3) {$\alpha=\dfrac{n+2}{2p}\left(1-\dfrac{1}{\lambda}\right)$};
\node [right] at (10.5,0) {$\lambda$};
\node [above] at (0,5.5) {$\alpha$};
\node [right] at (1.1,3) {\huge $B$};
\node [right] at (4.0,3) {\huge $A$};
\node [right] at (5.6,1.3) {\huge $C$};
\end{tikzpicture}

\caption{Graphs of the regions $A$, $B$, and $C$.}
\label{fig1}

\end{figure}

The following theorem deals with the case that $(\lambda,\alpha)\in A$.

\begin{thm}\label{thm3.1}
 Suppose $\alpha$ and $p$ satisfy \eqref{2.12}, $(\lambda, \alpha)\in A$, and $u$ satisfies \eqref{3.1}--\eqref{3.3}.  Then
 $$u=(\partial_t-\Delta)^\alpha u=0\quad\text{almost everywhere in }\mathbb{R}^n \times\mathbb{R}.$$
\end{thm}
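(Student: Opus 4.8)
The plan is to reduce the problem to a single scalar integral inequality in time and then run a self‑improving integrability (bootstrap) argument whose exact threshold is the defining inequality of the region $A$.

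First I would reduce. By Remark \ref{rem3.1}, $f:=(\partial_t-\Delta)^\alpha u\ge0$ and $u=J_\alpha f\ge0$, while the initial condition \eqref{3.3} together with Theorem \ref{thm2.3}(ii) forces $f=0$ in $\mathbb{R}^n\times(-\infty,0)$. Since the kernel $\Phi_\alpha$ is nonnegative, $J_\alpha$ is order preserving, so the second inequality in \eqref{3.2}, namely $f\le u^\lambda$, yields the self‑referential integral inequality
\[
0\le u(x,t)\le J_\alpha(u^\lambda)(x,t)=\iint_{\mathbb{R}^n\times(-\infty,t)}\Phi_\alpha(x-\xi,t-\tau)\,u(\xi,\tau)^\lambda\,d\xi\,d\tau
\]
for a.e.\ $(x,t)$, with $u\equiv0$ for $t<0$. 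It then suffices to prove $u\equiv0$, for then $f=(\partial_t-\Delta)^\alpha u=0$ as well.

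Next comes the main step, a bootstrap on the integrability of $u$ on each time slab $\mathbb{R}^n\times(0,T)$. Since $J_\alpha g(\cdot,t)$ for $t<T$ depends only on $g$ restricted to $\{\tau<T\}$, the parabolic Hardy--Littlewood--Sobolev estimate of Gopala Rao used in Remark \ref{rem2.1} applies slabwise: if $u\in L^q(\mathbb{R}^n\times(0,T))$ then $u^\lambda\in L^{q/\lambda}$ and, provided $\lambda/q>2\alpha/(n+2)$,
\[
\|u\|_{L^{q'}(\mathbb{R}^n\times(0,T))}\le\|J_\alpha(u^\lambda)\|_{L^{q'}(\mathbb{R}^n\times(0,T))}\le C\,\|u\|_{L^{q}(\mathbb{R}^n\times(0,T))}^{\lambda},\qquad \frac1{q'}=\frac{\lambda}{q}-\frac{2\alpha}{n+2}.
\]
Writing $s=1/q$, the reciprocal exponents obey the affine recursion $s\mapsto\lambda s-\tfrac{2\alpha}{n+2}$, whose fixed point is $s^\ast=\tfrac{2\alpha}{(n+2)(\lambda-1)}$ (with $s^\ast=+\infty$ when $\lambda=1$). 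Starting from $s_0=\tfrac1p-\tfrac{2\alpha}{n+2}$, the integrability of $u=J_\alpha f$ furnished by Remark \ref{rem2.1}, a direct computation shows that $(\lambda,\alpha)\in A$ is \emph{exactly} the condition $s_0<s^\ast$; since the recursion has slope $\lambda\ge1$, this forces $s_k$ to decrease strictly to $-\infty$, so after finitely many steps $\lambda s_k<2\alpha/(n+2)$ and the last application of $J_\alpha$ lands in $L^\infty$. Hence $u\in L^\infty(\mathbb{R}^n\times(0,T))$ for every $T$.

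Finally I would close with a scalar argument. Let $M(t)=\operatorname*{ess\,sup}_{\mathbb{R}^n\times(0,t)}u$, which is finite and nondecreasing by the previous step. Using $\int_{\mathbb{R}^n}\Phi_\alpha(x-\xi,t-\tau)\,d\xi=\tfrac{(t-\tau)^{\alpha-1}}{\Gamma(\alpha)}$ and $u^\lambda\le M(\tau)^\lambda$ in the integral inequality gives
\[
M(t)\le\frac1{\Gamma(\alpha)}\int_0^t(t-\tau)^{\alpha-1}M(\tau)^\lambda\,d\tau=:I_\alpha[M^\lambda](t).
\]
Fixing $T_0>0$ and using $M(\tau)^\lambda\le M(T_0)^{\lambda-1}M(\tau)$ on $[0,T_0]$ linearizes this to $M\le M(T_0)^{\lambda-1}I_\alpha[M]$; iterating with the Riemann--Liouville semigroup $I_\alpha^{\,k}=I_{k\alpha}$ and the bound $I_{k\alpha}[M](t)\le M(T_0)\,t^{k\alpha}/\Gamma(k\alpha+1)$ yields $M(t)\le M(T_0)\,\bigl(M(T_0)^{\lambda-1}t^{\alpha}\bigr)^{k}/\Gamma(k\alpha+1)\to0$ as $k\to\infty$. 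Thus $M\equiv0$ on $[0,T_0]$, and as $T_0$ is arbitrary, $u\equiv0$.

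The main obstacle I expect is making the nonlocal bootstrap fully rigorous: justifying the slabwise Hardy--Littlewood--Sobolev estimates with control of the (finitely many) constants, handling the endpoint passage from an $L^q$-gain to $L^\infty$ at the step where $\lambda s_k$ crosses $2\alpha/(n+2)$, and treating the borderline case $p=1$ of \eqref{2.12}, where the Sobolev gain is only of weak type and the first bootstrap step must be argued separately.
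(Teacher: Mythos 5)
Your proposal is essentially correct, and it splits naturally into two halves that compare differently with the paper. The reduction and the bootstrap are the paper's own route: the paper passes to $f=(\partial_t-\Delta)^\alpha u$ and the inequality $0\le f\le K(J_\alpha f)^\lambda$ (Section \ref{sec4}), and its Lemma \ref{lem7.3} is exactly your slabwise integrability bootstrap, except that instead of the sharp parabolic Hardy--Littlewood--Sobolev exponent it uses the non-endpoint estimate of Lemma \ref{lem7.2} with an $\varepsilon$ of room, showing the gain ratio $\frac{q/\lambda}{p_0}$ stays bounded above $1$; this sidesteps precisely the weak-type endpoint at $p=1$ that you flag, and the remaining case $p=1$, $\alpha=\frac{n+2}{2}$ is handled there by the kernel comparison $\Phi_\alpha\le C(T)\Phi_{\widehat\alpha}$ on bounded time intervals. (Your fixed-point identification of $A$ as the condition $s_0<s^*$ is a nice conceptual reformulation of why the iteration terminates, but you should also note that $\lambda s_0<\frac1p\le 1$ under condition $A$, so all intermediate exponents $q_k/\lambda$ exceed $1$.) The closing step is where you genuinely diverge: the paper locates the first time $t_0$ at which $\|f\|_{L^\infty(\mathbb{R}^n\times\mathbb{R}_t)}$ becomes positive and derives the contradiction
$K^{-1}\le\bigl(\tfrac{(b-t_0)^\alpha}{\Gamma(\alpha+1)}\bigr)^\lambda\|f\|^{\lambda-1}_{L^\infty(\Omega_b)}\to0$ as $b\to t_0^+$, using Lemma \ref{lem7.1} and $\lambda\ge1$; you instead linearize $M^\lambda\le M(T_0)^{\lambda-1}M$ and run the fractional Gronwall iteration $M\le c^kI_{k\alpha}[M]\le M(T_0)c^kt^{k\alpha}/\Gamma(k\alpha+1)\to0$, which uses $\lambda\ge1$ in exactly the same place and is equally valid (the semigroup identity $I_\alpha^k=I_{k\alpha}$ is the paper's \eqref{5.3}). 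The paper's ending is shorter given Lemma \ref{lem7.1}; yours is more quantitative and avoids the ``first positive time'' argument, at the cost of the Mittag-Leffler estimate. No essential gap, but to make the bootstrap airtight you should either adopt the paper's $\varepsilon$-room version of the Sobolev step or supply the separate endpoint arguments you yourself identify.
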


The following three theorems deal with the case $(\lambda,\alpha)\in B$.

\begin{thm}\label{thm3.2}
 Suppose $\alpha$ and $p$ satisfy \eqref{2.12}, $(\lambda,\alpha)\in B$, and $u$ satisfies \eqref{3.1}--\eqref{3.3}.  Then for all $T>0$ we have
 \begin{equation}\label{3.4}
  \| u\|_{L^\infty (\mathbb{R}^n \times(0,T))}\leq(MT^\alpha )^{\frac{1}{1-\lambda}}
 \end{equation}
 and
 \begin{equation}\label{3.5}
  \|(\partial_t-\Delta)^\alpha u\|_{L^\infty (\mathbb{R}^n \times(0,T))}\leq(MT^\alpha )^{\frac{\lambda}{1-\lambda}}
 \end{equation}
 where
 \begin{equation}\label{3.6}
  M=M(\alpha,\lambda)=\frac{\Gamma(\frac{\alpha\lambda}{1-\lambda}+1)}{\Gamma(\alpha+\frac{\alpha\lambda}{1-\lambda}+1)}
 \end{equation}
where $\Gamma$ is the Gamma function.
\end{thm}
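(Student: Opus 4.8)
The plan is to set $f:=(\partial_t-\Delta)^\alpha u$, so that $u=J_\alpha f$ by Definition \ref{def2.1}, and to reduce everything to a scalar Volterra integral inequality. By Remark \ref{rem3.1} the first inequality in \eqref{3.2} gives $f\ge 0$; by property (P3) (Theorem \ref{thm2.3}(ii)) the initial condition \eqref{3.3} forces $f=0$ on $\mathbb{R}^n\times(-\infty,0)$; and the second inequality in \eqref{3.2} gives $0\le f\le u^\lambda$. Since the kernel $\Phi_\alpha$ in \eqref{2.4} is nonnegative, $u=J_\alpha f\le J_\alpha(u^\lambda)$, and since $\int_{\mathbb{R}^n}\Phi_\alpha(x,s)\,dx=s^{\alpha-1}/\Gamma(\alpha)$ for $s>0$ by \eqref{2.2}, writing $g(t):=\|u\|_{L^\infty(\mathbb{R}^n\times(0,t))}$ (nondecreasing, with the integral in \eqref{2.4} now running only over $\tau\in(0,t)$ because $f=0$ for $\tau<0$), I would obtain, after taking essential suprema in $x$ and checking that the right-hand side is nondecreasing in $t$, the inequality
\[
 g(t)\le\int_0^t\frac{(t-\tau)^{\alpha-1}}{\Gamma(\alpha)}\,g(\tau)^\lambda\,d\tau,\qquad t>0.
\]

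Next I would observe that $\psi(t):=(Mt^\alpha)^{1/(1-\lambda)}=c\,t^\beta$, with $\beta=\alpha/(1-\lambda)$ and $c=M^{1/(1-\lambda)}$, solves the corresponding equation with equality: the Beta identity $\int_0^t(t-\tau)^{\alpha-1}\tau^{\beta\lambda}\,d\tau=t^{\alpha+\beta\lambda}\,\Gamma(\alpha)\Gamma(\beta\lambda+1)/\Gamma(\alpha+\beta\lambda+1)$ forces exactly $\beta(1-\lambda)=\alpha$ and $c^{1-\lambda}=M$ with $M$ as in \eqref{3.6}, identifying $\psi$ as the sharp candidate. To prove $g\le\psi$ I would iterate the inequality: inserting a bound $g(t)\le a_k t^{\gamma_k}$ on $(0,T]$ yields $g(t)\le a_{k+1}t^{\gamma_{k+1}}$ with $\gamma_{k+1}=\alpha+\lambda\gamma_k$ and $a_{k+1}=a_k^\lambda\,\Gamma(\gamma_k\lambda+1)/\Gamma(\gamma_{k+1}+1)$. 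Starting from $\gamma_0=0$, $a_0=g(T)$, one gets $\gamma_k=\alpha(1-\lambda^k)/(1-\lambda)\to\beta$, while taking logarithms in the coefficient recursion gives an affine recursion of slope $\lambda\in(0,1)$ whose inhomogeneity converges to $\log M$, so $a_k\to M^{1/(1-\lambda)}=c$. Letting $k\to\infty$ for each fixed $t$ gives $g(t)\le c\,t^\beta=\psi(t)$, which is \eqref{3.4}; then \eqref{3.5} is immediate from $0\le f\le u^\lambda$, since $\|f\|_{L^\infty(\mathbb{R}^n\times(0,T))}\le g(T)^\lambda\le(MT^\alpha)^{\lambda/(1-\lambda)}$.

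I expect the main obstacle to be supplying the base case, namely the local boundedness $g(T)<\infty$, since a priori Theorem \ref{thm2.2}(i) only yields $u\in L^p_{\mathrm{loc}}$. Here the sublinearity is decisive: from $0\le f\le u^\lambda$, local $L^q$ integrability of $u$ gives local $L^{q/\lambda}$ integrability of $f$ with $q/\lambda>q$ because $\lambda<1$, and Remark \ref{rem2.1} upgrades this to strictly better local integrability of $u=J_\alpha f$. Tracking reciprocal exponents, the recursion reads $r_{k+1}=\lambda r_k-2\alpha/(n+2)$, so each step decreases the index by at least $2\alpha/(n+2)>0$; after finitely many iterations the integrability index of $f$ exceeds $(n+2)/(2\alpha)$ and a final application of the potential estimate places $u$ in $L^\infty_{\mathrm{loc}}(\mathbb{R}^n\times\mathbb{R})$, whence $g(T)<\infty$ for every $T$. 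Making this bootstrap rigorous—in particular accommodating the endpoint case $p=1$ of \eqref{2.12} by passing to $f\in L^{1/\lambda}_{\mathrm{loc}}$ after one step, and handling the terminal step where the index crosses $(n+2)/(2\alpha)$—is the part demanding the most care.
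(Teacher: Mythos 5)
Your overall strategy is the one the paper uses: reduce to $f=(\partial_t-\Delta)^\alpha u$, $u=J_\alpha f$ with $f\ge 0$ vanishing for $t<0$, establish local boundedness on slabs $\mathbb{R}^n\times(0,T)$, and then iterate the Volterra inequality through the Beta identity \eqref{5.3} to converge to the sharp constant. Your iteration is organized slightly differently from the paper's (the paper's proof of Theorem \ref{thm4.2} first extracts the correct power $t^{\alpha\lambda/(1-\lambda)}$ in one shot from Lemma \ref{lem7.1} and then iterates only the multiplicative constant $\gamma_j\to\bar M^{\lambda/(1-\lambda)}$, whereas you start from the constant bound $g(T)$ and let both the exponent $\gamma_k\to\alpha/(1-\lambda)$ and the coefficient $a_k\to M^{1/(1-\lambda)}$ converge), but both versions are correct and rest on the same computation, namely \eqref{8.6}.

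The genuine gap is in your base case. Hypothesis \eqref{2.12} permits $p=1$ with $\alpha=\frac{n+2}{2}$, i.e. $\alpha\ge\frac{n+2}{2p}$, and in that case your Sobolev-type bootstrap cannot even start: the smoothing estimate of Remark \ref{rem2.1} (and likewise Lemma \ref{lem7.2}) requires the starting exponent $p_0$ to satisfy $p_0<\frac{n+2}{2\alpha}$, which here equals $1$, so no admissible $p_0\ge 1$ exists; and your suggested repair of ``passing to $f\in L^{1/\lambda}_{\mathrm{loc}}$ after one step'' does not help, since one would then need $1/\lambda<\frac{n+2}{2\alpha}=1$, i.e. $\lambda>1$, contradicting $(\lambda,\alpha)\in B$. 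The paper closes this case (Lemma \ref{lem7.3}, Case II) with a separate idea you are missing: on a bounded time slab $\mathbb{R}^n\times(0,T)$ one has the pointwise kernel comparison
\[
\frac{\Phi_\alpha(x-\xi,t-\tau)}{\Phi_{\widehat\alpha}(x-\xi,t-\tau)}
=(t-\tau)^{\alpha-\widehat\alpha}\,\Gamma(\widehat\alpha)/\Gamma(\alpha)\le C(T,\alpha,\widehat\alpha)
\]
for any $\widehat\alpha\in(0,\frac{n+2}{2p})$, so $J_\alpha f\le C\,J_{\widehat\alpha}f$ there and the inequality $0\le f\le (J_\alpha f)^\lambda$ self-improves to $0\le f\le C^\lambda (J_{\widehat\alpha}f)^\lambda$ with a subcritical $\widehat\alpha$ (and $(\lambda,\widehat\alpha)$ still in $B$ since $B$ only constrains $\lambda$); the bootstrap then runs as in the subcritical case. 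With that supplement, and with the endpoint smoothing estimate replaced by the strict-inequality version of Lemma \ref{lem7.2} (costing an $\varepsilon$ in the exponent gain, exactly as in the paper's Case I), your argument is complete.
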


By the following theorem the bounds \eqref{3.4} and \eqref{3.5} in 
Theorem \ref{thm3.2} are optimal.

\begin{thm}\label{thm3.3}
 Suppose $\alpha$ and $p$ satisfy \eqref{2.12}, $(\lambda,\alpha)\in B,\,T>0$, and $N<M$ where $M$ is given by \eqref{3.6}.  Then there exists a solution
 $$u\in Y^p_\alpha \cap C(\mathbb{R}^n \times\mathbb{R})$$
 of \eqref{3.2}, \eqref{3.3} such that
 $$(\partial_t-\Delta)^\alpha u\in L^p (\mathbb{R}^n \times\mathbb{R})\cap C(\mathbb{R}^n \times\mathbb{R}),$$
 $$u(0,t)\geq(Nt^\alpha )^{\frac{1}{1-\lambda}}\quad\text{for }0<t<T$$
 and
 $$(\partial_t-\Delta)^\alpha u(0,t)=(Nt^\alpha )^{\frac{\lambda}{1-\lambda}}\quad\text{for }0<t<T.$$
\end{thm}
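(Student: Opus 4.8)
The plan is to construct the solution explicitly as $u=J_\alpha f$ for a carefully chosen nonnegative source $f$, using the strict inequality $N<M$ as the ``room'' needed to pass from the (non-$L^p$) $x$-independent extremal solution to a genuine element of $Y^p_\alpha$. The starting observation is that when $f$ depends on $t$ alone the Gaussian factor in $\Phi_\alpha$ integrates to $1$, so $J_\alpha f$ collapses to the Riemann--Liouville integral $J_{\alpha,0,1}f$; combined with the Beta-function identity $\frac{1}{\Gamma(\alpha)}\int_0^t(t-\tau)^{\alpha-1}\tau^\beta\,d\tau=\frac{\Gamma(\beta+1)}{\Gamma(\alpha+\beta+1)}t^{\alpha+\beta}$, this shows that the pure power $(Nt^\alpha)^{\lambda/(1-\lambda)}$ produces, under $J_\alpha$, the function $M\,N^{\lambda/(1-\lambda)}t^{\alpha/(1-\lambda)}=\tfrac{M}{N}(Nt^\alpha)^{1/(1-\lambda)}$, where $M$ is exactly the constant \eqref{3.6}. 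Thus the pure power yields the desired boundary datum for $(\partial_t-\Delta)^\alpha u$ at $x=0$ while overshooting the target value $(Nt^\alpha)^{1/(1-\lambda)}$ of $u(0,t)$ by the factor $M/N>1$; this overshoot is the slack that will survive localization.

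With this in mind I would take the ansatz
\[
f(x,t)=(Nt^\alpha)^{\lambda/(1-\lambda)}\,\theta(t)\,e^{-|x|^2/(4a)},
\]
where $\theta$ is continuous, $0\le\theta\le1$, $\theta\equiv1$ on $(0,T)$, and compactly supported in $(0,\infty)$, and $a>0$ is a large parameter. The Gaussian spatial factor lies in every $L^p(\mathbb{R}^n)$ and the power times $\theta$ is bounded with compact $t$-support, so $f\in L^p(\mathbb{R}^n\times\mathbb{R})\cap C(\mathbb{R}^n\times\mathbb{R})$ with $f=0$ for $t\le0$; hence $u:=J_\alpha f\in Y^p_\alpha$, $u\ge0$, $u=0$ for $t\le0$ (giving \eqref{3.3} via Theorem~\ref{thm2.3}(ii)), and $(\partial_t-\Delta)^\alpha u=f$ by Definition~\ref{def2.1}. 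Since $\theta(t)=1$ and $e^{0}=1$, the boundary identity $(\partial_t-\Delta)^\alpha u(0,t)=f(0,t)=(Nt^\alpha)^{\lambda/(1-\lambda)}$ holds automatically for $0<t<T$. Writing $u$ out with the convolution-of-Gaussians identity $(G_s*e^{-|\cdot|^2/(4a)})(x)=(a/(s+a))^{n/2}e^{-|x|^2/(4(s+a))}$ for the heat kernel $G_s$, and using $s=t-\tau\in(0,t)$ to bound both factors below, I obtain
\[
u(x,t)\ge\Big(\tfrac{a}{t+a}\Big)^{n/2}e^{-|x|^2/(4a)}\int_0^t\frac{(t-\tau)^{\alpha-1}}{\Gamma(\alpha)}\,\theta(\tau)\,(N\tau^\alpha)^{\lambda/(1-\lambda)}\,d\tau .
\]

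For $0<t<T$ the integral equals $M\,N^{\lambda/(1-\lambda)}t^{\alpha/(1-\lambda)}$ by the Beta identity, so choosing $a$ so large that $(a/(T+a))^{n/2}\ge N/M$ (possible since $N/M<1$) yields $u(0,t)\ge(Nt^\alpha)^{1/(1-\lambda)}$, the required lower bound. For the differential inequality $f\le u^\lambda$ I would raise the displayed lower bound to the power $\lambda$ and cancel the resulting $e^{-\lambda|x|^2/(4a)}$ against the $e^{-|x|^2/(4a)}$ in $f$; here the hypothesis $\lambda<1$ is used, since $e^{-(1-\lambda)|x|^2/(4a)}\le1$ eliminates $x$ entirely and reduces the inequality to the purely temporal statement
\[
(Nt^\alpha)^{\lambda/(1-\lambda)}\theta(t)\le\Big(\tfrac{a}{t+a}\Big)^{\lambda n/2}\Big(\int_0^t\tfrac{(t-\tau)^{\alpha-1}}{\Gamma(\alpha)}\theta(\tau)(N\tau^\alpha)^{\lambda/(1-\lambda)}d\tau\Big)^{\lambda}.
\]
On $(0,T)$ this is once more the Beta computation and collapses to the same condition $(a/(T+a))^{n/2}\ge N/M$, so a single large value of $a$ simultaneously secures the boundary lower bound and the inequality on $(0,T)$.

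The most delicate bookkeeping, and the step I expect to be the main obstacle, is verifying $f\le u^\lambda$ for \emph{all} $(x,t)$, in particular in the cutoff zone $t\ge T$. There the right-hand integral above stays bounded below by a positive constant on the compact support of $\theta$, while $\theta$ itself may be taken to decrease; by the strict margin already present at $t=T$ (where the inequality holds with room to spare) and by choosing $\theta$ to decay sufficiently fast past $T$, the inequality persists. The Gaussian spatial profile is precisely what removes $x$ from the estimate, the strict gap $N<M$ is what makes the large-$a$ choice feasible on $(0,T)$, and the behaviour on $t\ge T$ must be controlled by hand through $\theta$. It remains only to record the routine facts that $u=\Phi_\alpha*f$ is continuous (from $f\in C$ with compact $t$-support and $\Phi_\alpha\in L^1_{\text{loc}}$) and that $f\in L^p(\mathbb{R}^n\times\mathbb{R})\cap C$, completing the membership requirements $u\in Y^p_\alpha\cap C(\mathbb{R}^n\times\mathbb{R})$ and $(\partial_t-\Delta)^\alpha u\in L^p(\mathbb{R}^n\times\mathbb{R})\cap C(\mathbb{R}^n\times\mathbb{R})$.
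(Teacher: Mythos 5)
Your construction is correct, and it reaches the same goal as the paper by a technically different route. The paper (proof of Theorem \ref{thm4.3}, after reducing to $K=T=1$ via Remark \ref{rem8.2}) also starts from the exact $x$-independent profile $g(t)=(Mt^\alpha)^{\lambda/(1-\lambda)}$ of Remark \ref{rem8.1}, cuts it off in time, and damps it spatially by an $L^p$ profile scaled by $(N/M)^{\lambda/(1-\lambda)}$; but its spatial factor is the slowly varying exponential $\varphi(\varepsilon x)=e^{-(\sqrt{1+|\varepsilon x|^2}-1)}$, and the key technical device is the two-parameter estimate $\varphi(\varepsilon\xi)/\varphi(\varepsilon x)\ge e^{-\varepsilon\gamma\sqrt2}$ for $|\xi-x|<\gamma\sqrt2$ together with the heat-kernel mass $I(\gamma)\to1$, choosing first $\gamma$ large and then $\varepsilon$ small (see \eqref{8.9}--\eqref{8.10}). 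You replace this with the exact semigroup identity $e^{s\Delta}e^{-|\cdot|^2/(4a)}=(a/(s+a))^{n/2}e^{-|\cdot|^2/(4(s+a))}$, which turns the whole spatial analysis into the single clean factor $(a/(a+T))^{n/2}\to1$; that is arguably more elementary, and both arguments use $\lambda<1$ in the same way to absorb the spatial profile when raising to the power $\lambda$ (your $e^{-(1-\lambda)|x|^2/(4a)}\le1$ versus the paper's $\varphi^{\lambda-1}\ge1$). The treatments of the temporal cutoff also differ: the paper shows directly that $J_\alpha g_\delta\ge\sqrt{N/M}\,J_\alpha g$ on the transition band $[1,1+\delta]$ for $\delta$ small (inequality \eqref{8.8}), whereas you rely on the strict margin at $t=T$ (choose $a$ with $(a/(a+T))^{n/2}>N/M$ strictly) and continuity of $t\mapsto\int_0^{\min(t,T)}\frac{(t-\tau)^{\alpha-1}}{\Gamma(\alpha)}(N\tau^\alpha)^{\lambda/(1-\lambda)}d\tau$ to push the inequality onto a band $[T,T+\eta]$ where $\theta$ descends from $1$ to $0$; this is the one place your write-up is loose (``decay sufficiently fast'' is not quite the point, since $\theta\approx1$ just past $T$ by continuity), but the margin-plus-continuity mechanism you name does close it, and for $t\ge T+\eta$ the inequality is vacuous because $f=0$ there. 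Both proofs then obtain the lower bound on $u(0,t)$ either by direct computation or, as the paper does for \eqref{4.14}, from $u\ge f^{1/\lambda}$.
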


Although the estimates \eqref{3.4} and \eqref{3.5} are optimal there still
remains the question as to whether there is a {\it single} solution which
has the same size as these estimates as $t\to\infty$.  By the
following theorem there is such a solution.

\begin{thm}\label{thm3.4}
 Suppose $\alpha$ and $p$ satisfy \eqref{2.12} and $(\lambda,\alpha)\in B$.  Then there exists $N>0$ and $u\in Y^p_\alpha$ satisfying \eqref{3.2}, \eqref{3.3} such that
 $$u(x,t)\geq(Nt^\alpha )^{\frac{1}{1-\lambda}}\quad\text{for }(x,t)\in\Omega$$ 
 and
 $$(\partial_t-\Delta)^\alpha u(x,t)\geq(Nt^\alpha )^{\frac{\lambda}{1-\lambda}}\quad\text{for }(x,t)\in\Omega$$
 where $\Omega=\{(x,t)\in\mathbb{R}^n\times\mathbb{R} :|x|^2 <t\}$.
\end{thm}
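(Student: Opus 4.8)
The plan is to build the solution by prescribing its source $f=(\partial_t-\Delta)^\alpha u$ and then setting $u=J_\alpha f$. The correct growth exponents are suggested by the $x$-independent case: if $f$ depends only on $t$, integrating the Gaussian in \eqref{2.4} over $\mathbb{R}^n$ collapses $J_\alpha$ to the Riemann--Liouville integral, and since that integral sends $\tau^b$ to $\frac{\Gamma(b+1)}{\Gamma(\alpha+b+1)}t^{\alpha+b}$, the self-consistent choice for $f=u^\lambda$ is $u\sim t^a$, $f\sim t^b$ with $a=\frac{\alpha}{1-\lambda}$ and $b=\lambda a=\frac{\lambda\alpha}{1-\lambda}$, so that $a=\alpha+b$; this is exactly the profile realizing the extremal constant $M$ of Theorem~\ref{thm3.2}. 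The obstruction is that $t^b$ does not decay in $x$ and so fails to lie in $X^p$. I therefore localize the source to the paraboloid and set
\[
 f(x,t)=c\,t^b\,\chi_\Omega(x,t),\qquad \Omega=\{|x|^2<t\},
\]
where $c>0$ will be fixed at the end and $\chi_\Omega$ is the indicator of $\Omega$.

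First I verify the structural requirements. Since $f$ is supported in $\{t>0\}$ it vanishes for $t\le0$, and for each $T$ one has
\[
 \|f\|_{L^p(\mathbb{R}^n\times\mathbb{R}_T)}^p=c^p\int_0^T t^{pb}\,\bigl|\{|x|^2<t\}\bigr|\,dt=c^p\,\omega_n\int_0^T t^{pb+n/2}\,dt<\infty,
\]
where $\omega_n=|\{|x|<1\}|$, so $f\in X^p$. Hence $u:=J_\alpha f\in Y^p_\alpha$, and $u\ge0$ because $\Phi_\alpha\ge0$ and $f\ge0$. By Definition~\ref{def2.1}, $(\partial_t-\Delta)^\alpha u=f\ge0$, which is the left inequality of \eqref{3.2}; and since $f=0$ in $\mathbb{R}^n\times\mathbb{R}_0$, Theorem~\ref{thm2.3}(ii) gives $u=0$ in $\mathbb{R}^n\times(-\infty,0)$, i.e.\ \eqref{3.3}.

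The heart of the argument is a lower bound for $u$ on $\Omega$, obtained from the parabolic homogeneity $\Phi_\alpha(\mu y,\mu^2 s)=\mu^{2\alpha-2-n}\Phi_\alpha(y,s)$. Substituting $\xi=\sqrt t\,\eta$, $\tau=t\sigma$ in \eqref{2.4} and writing $\zeta=x/\sqrt t$, all powers of $t$ combine to $t^{\alpha+b}=t^a$, giving
\[
 u(x,t)=c\,t^a\,G\!\left(\tfrac{x}{\sqrt t}\right),\qquad
 G(\zeta):=\iint_{\{|\eta|^2<\sigma<1\}}\Phi_\alpha(\zeta-\eta,\,1-\sigma)\,\sigma^b\,d\eta\,d\sigma .
\]
The defining integral converges (near $\sigma=1$ it behaves like $\int(1-\sigma)^{\alpha-1}\,d\sigma$, integrable for $\alpha>0$), $G$ is continuous in $\zeta$, and $G(\zeta)>0$ for every $\zeta$ since the integrand is strictly positive on a set of positive measure (e.g.\ $\sigma$ near $1$ with $|\eta|$ small). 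Hence $K:=\min_{|\zeta|\le1}G(\zeta)>0$ by compactness, and because $(x,t)\in\Omega$ forces $|x/\sqrt t|<1$, we conclude $u(x,t)\ge Kc\,t^a$ on $\Omega$.

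Finally I close the estimates with $c=K^{\lambda/(1-\lambda)}$ and $N=K$, so that $Kc=K^{1/(1-\lambda)}$. On $\Omega$ this gives $u\ge Kc\,t^a=(Nt^\alpha)^{1/(1-\lambda)}$, the first asserted bound, while $f=c\,t^b=(Nt^\alpha)^{\lambda/(1-\lambda)}$ on $\Omega$, the second. For the right inequality of \eqref{3.2} we have $f=0\le u^\lambda$ off $\Omega$, and on $\Omega$, using $u\ge Kc\,t^a$,
\[
 u^\lambda\ge(Kc)^\lambda t^{a\lambda}=(Kc)^\lambda t^b=K^{\lambda/(1-\lambda)}t^b=c\,t^b=f .
\]
Thus $u$ satisfies \eqref{3.1}--\eqref{3.3} together with both lower bounds. \emph{The main obstacle is the crux lower bound} $u\ge Kc\,t^a$ on $\Omega$: everything rests on showing that the self-similar profile $G$ is finite, continuous, and bounded below by a positive constant on the closed unit ball, where one uses the integrability of $\Phi_\alpha(\cdot,1-\sigma)\sigma^b$ near $\sigma=1$ and the strict positivity of the integrand.
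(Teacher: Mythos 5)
Your construction is correct and is essentially the paper's own proof (Theorem \ref{thm4.4} via Remarks \ref{rem8.1} and \ref{rem8.2}): the source is the same power $t^{\alpha\lambda/(1-\lambda)}$ cut off to the paraboloid $\Omega$, the entire argument reduces to the lower bound $J_\alpha f\geq C\,t^{\alpha/(1-\lambda)}$ on $\Omega$, and the constant is then fixed by the same algebra. The only difference is how that crux bound is obtained---the paper restricts the $\tau$-integral to $(t/4,3t/4)$ and uses the explicit Gaussian estimate of Lemma \ref{lem7.4}, whereas you use parabolic self-similarity together with continuity, positivity, and compactness of the profile $G$ on $\overline{B_1(0)}$; this works, though the continuity of $G$ (asserted without proof) should be justified, e.g.\ by integrating out $\eta$ first so that the resulting function of $\sigma$ is continuous in $\zeta$ and dominated by $(1-\sigma)^{\alpha-1}\sigma^{b}/\Gamma(\alpha)$.
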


According to the following theorem, if $(\lambda,\alpha)\in C$ then
there exist bounds as $t\to0^+$ for solutions of
\eqref{3.1}--\eqref{3.3} in neither the pointwise (i.e. $L^\infty$)
sense nor in the $L^q$ sense when $q>p$.

Moreover by Theorem \ref{thm3.6} the same is true as $t\to\infty$ provided $q\in[q_0 ,\infty]$ for some $q_0 =q_0(n,\alpha,\lambda)>p$.

\begin{thm}\label{thm3.5}
 Suppose $\alpha$ and $p$ satisfy \eqref{2.12} 
 $$(\lambda,\alpha)\in C \quad\text{ and }\quad  q\in(p,\infty].$$
 Then there exists a solution $u\in Y^p_\alpha$ of \eqref{3.2}, \eqref{3.3} and a sequence $\{t_j \}\subset(0,1)$ such that
 $$\lim_{j\to\infty}t_j =0$$
 and
 $$\|u^\lambda\|_{L^q(R_j )}=\|(\partial_t-\Delta)^\alpha u\|_{L^q (R_j )}=\infty\quad\text{for }j=1,2,...,$$
 where
 \begin{equation}\label{3.7}
  R_j =\{(x,t)\in\mathbb{R}^n \times\mathbb{R}:|x|<\sqrt{t_j} \text{ and }  t_j <t<2t_j \}.
 \end{equation}
\end{thm}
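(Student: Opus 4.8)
The plan is to realize the solution as a superposition of singular ``bumps'' that are critical for $L^p$ but supercritical for every $L^q$ with $q>p$, arranging for exactly one bump to sit inside each region $R_j$. By Remark~\ref{rem3.1} the sign hypothesis is automatic, and by Definition~\ref{def2.1} together with Theorem~\ref{thm2.3}(ii) it suffices to produce $f\in X^p$ with $f\ge 0$, $f=0$ for $t<0$, and $f\le (J_\alpha f)^\lambda$, and then set $u=J_\alpha f$. Indeed $u\in Y^p_\alpha$, the identity $(\partial_t-\Delta)^\alpha u=f$ gives $0\le f\le u^\lambda$, and $u=0$ for $t<0$. Since $0\le f\le u^\lambda$ pointwise forces $\|u^\lambda\|_{L^q(R_j)}\ge\|f\|_{L^q(R_j)}$, it is enough to make $\|f\|_{L^q(R_j)}=\infty$.

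The exponent of the singularity is chosen first. Write $\rho$ for the parabolic distance $\rho(x,t)=\max\{|x|,\sqrt{|t|}\}$; a profile $\rho^{-\beta}$ is in $L^p_{\mathrm{loc}}$ exactly when $\beta<\frac{n+2}{p}$ and fails to be in $L^q_{\mathrm{loc}}$ once $\beta\ge\frac{n+2}{q}$. Because $(\lambda,\alpha)\in C$ means precisely $\frac{2\alpha\lambda}{\lambda-1}<\frac{n+2}{p}$, and $q>p$ gives $\frac{n+2}{q}<\frac{n+2}{p}$, I can fix
\[
\beta\in\Bigl(\max\bigl\{\tfrac{2\alpha\lambda}{\lambda-1},\tfrac{n+2}{q}\bigr\},\ \tfrac{n+2}{p}\Bigr)
\]
(dropping the term $\frac{n+2}{q}$ when $q=\infty$). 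Here $\beta>\frac{2\alpha\lambda}{\lambda-1}$ is what makes the nonlinear inequality hold with room to spare, $\beta\ge\frac{n+2}{q}$ forces the $L^q$ blow-up, and $\beta<\frac{n+2}{p}$ keeps each bump in $L^p$. I then set $t_j=4^{-j}$ and $s_j=\tfrac32 t_j$, so that the points $(0,s_j)$ lie in the interiors of the pairwise disjoint $R_j$, and define
\[
f_j(\xi,\tau)=\rho_j(\xi,\tau)^{-\beta}\,\chi_{\{\rho_j<r_j,\ \tau<s_j\}},\qquad f=\sum_j f_j ,
\]
where $\rho_j$ is the parabolic distance to $(0,s_j)$ and $r_j\downarrow 0$ is small enough that the (disjoint) supports obey $\operatorname{supp}f_j\subset R_j$.

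The crux is a uniform pointwise lower bound for the parabolic potential,
\[
J_\alpha f_j(z)\ge c\,\rho_j(z)^{2\alpha-\beta}\qquad\text{for }\rho_j(z)<\delta r_j ,
\]
with $c,\delta>0$ independent of $j$. This is obtained by restricting the convolution $\Phi_\alpha*f_j$ in \eqref{2.4} to the parabolic annulus at scale $\rho_j(z)$ lying in the causal past of $z$, where $\Phi_\alpha$ is comparable to $\rho_j(z)^{2\alpha-2-n}$ and $f_j$ to $\rho_j(z)^{-\beta}$, the annulus having measure comparable to $\rho_j(z)^{n+2}$, so the product integrates to $\rho_j(z)^{2\alpha-\beta}$. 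I expect this estimate to be the main obstacle: the delicate point is keeping the constants scale- and $j$-independent while honoring both the cutoff defining $f_j$ and the causal restriction $\tau<t$ built into $J_\alpha$.

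Granting the lower bound, the remaining verifications are short. On $\operatorname{supp}f_j$ one has $f_j=\rho_j^{-\beta}\le\bigl(c\,\rho_j^{2\alpha-\beta}\bigr)^\lambda\le (J_\alpha f_j)^\lambda\le u^\lambda$, the first inequality holding for $\rho_j$ below a $j$-independent threshold $\rho_0$ because $(\lambda-1)\beta-2\alpha\lambda>0$; taking $r_j\le\rho_0$ yields $f\le u^\lambda$ everywhere (off the supports $f=0$). Since $(0,s_j)$ is interior to $R_j$ and $\beta q\ge n+2$, we get $\|f\|_{L^q(R_j)}\ge\|f_j\|_{L^q(R_j)}=\infty$, whence $\|u^\lambda\|_{L^q(R_j)}=\infty$, with the case $q=\infty$ following because $f_j$ is unbounded near $(0,s_j)$. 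Finally $\beta<\frac{n+2}{p}$ gives $\|f_j\|_{L^p(\mathbb{R}^n\times\mathbb{R})}^p\le C_\beta\, r_j^{\,n+2-\beta p}$ with $n+2-\beta p>0$, so choosing, say, $r_j=\min\{\rho_0,\sqrt{t_j/2},2^{-j}\}$ makes $\sum_j\|f_j\|_{L^p}^p<\infty$; hence $f\in X^p$ and $u=J_\alpha f\in Y^p_\alpha$, completing the construction.
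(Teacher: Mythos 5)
Your reduction to the $J_\alpha$ problem, the choice of the exponent $\beta$, and the $L^p$/$L^q$ bookkeeping are all fine, and your self-interaction estimate $J_\alpha f_j(z)\ge c\,\rho_j(z)^{2\alpha-\beta}$ is correct \emph{where you state it}, namely for $\rho_j(z)<\delta r_j$ with some necessarily strict $\delta<1$. The gap is that you then apply it on all of $\operatorname{supp}f_j=\{\rho_j<r_j,\ \tau<s_j\}$. On the outer shell $\delta r_j\le\rho_j(z)<r_j$, and in particular near the earliest times of the support, the inequality $f_j\le(J_\alpha f_j)^\lambda$ is not merely unproved---it is false. Writing $a:=s_j-r_j^2$ for the bottom of the support, Remark \ref{rem7.1} and Lemma \ref{lem7.1} give, for $t\in(a,a+\epsilon)$,
\[
J_\alpha f_j(x,t)\le\frac{\epsilon^\alpha}{\Gamma(\alpha+1)}\,\|f_j\|_{L^\infty(\mathbb{R}^n\times(a,a+\epsilon))}\le C\,\frac{\epsilon^\alpha}{\Gamma(\alpha+1)}\,r_j^{-\beta}\longrightarrow 0\quad\text{as }\epsilon\to0^+,
\]
while $f_j\approx r_j^{-\beta}>0$ on that slab. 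Since $\lambda>1$, any nonzero $h$ that vanishes for $\tau<a$ and satisfies $h\le(J_\alpha h)^\lambda$ must be \emph{unbounded} in every strip $\mathbb{R}^n\times(a,a+\epsilon)$ (this is exactly the mechanism behind Theorem \ref{thm4.1}); your $f_j$ is bounded near the bottom of its support, so a single bump can never dominate itself there. Consequently the chain $f=f_0+\sum f_j\le\sum(J_\alpha f_j)^\lambda\le(J_\alpha f)^\lambda$ breaks on a set of positive measure inside every $R_j$.

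This is precisely the difficulty the paper's proof of Theorem \ref{thm4.5} is engineered to avoid: it superposes a background profile $f_0(x,t)=t^{-r}\chi_{\{|x|^2<t<1\}}$ whose potential satisfies $J_\alpha f_0\ge C\,t^{-(r-\alpha)}$ on all of $\Omega_0$ (see \eqref{8.17}), and on the lower portion $\Omega_j^-$ of each bump region it controls the bump by $f_j\le(J_\alpha f_0)^\lambda$ (inequality \eqref{8.28}) rather than by the bump's own potential, which is used only on the upper portion $\Omega_j^+$ where it has had time to build up (Lemma \ref{lem7.7} and \eqref{8.26}). Your construction is likely repairable in the same spirit---either add such a background term, or prove a cross-interaction estimate showing that with $r_j\approx\sqrt{t_j}$ and $t_{j+1}=t_j/4$ the potential $J_\alpha f_{j+1}$ of the next bump is bounded below by $c\,r_j^{2\alpha-\beta}$ uniformly on $\operatorname{supp}f_j$, which would cover the outer shell---but either repair requires an estimate on the interaction between different pieces of $f$ that your write-up does not contain.
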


\begin{thm}\label{thm3.6}
 Suppose $\alpha$ and $p$ satisfy \eqref{2.12},
 $$(\lambda,\alpha)\in C \quad\text{ and }\quad  q\in\left[\frac{n+2}{2\alpha}\left(1-\frac{1}{\lambda}\right),\infty\right].$$
 Then there exists a solution $u\in Y^p_\alpha$ of \eqref{3.2}, \eqref{3.3} and a sequence $\{t_j \}\subset(1,\infty)$ such that
 $$\lim_{j\to\infty}t_j =\infty$$
 and
 $$\|u^\lambda\|_{L^q(R_j )}=\|(\partial_t-\Delta)^\alpha u\|_{L^q
   (R_j )}=\infty
\quad\text{for } j=1,2,...,$$
 where $R_j$ is given in \eqref{3.7}.
\end{thm}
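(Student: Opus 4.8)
The plan is to reduce the theorem to the construction of a single nonnegative ``source'' $f$, and to build $f$ out of explicitly translated and truncated copies of the kernel $\Phi_\beta$ at a \emph{critical} order $\beta$. First I would reduce: by Remark \ref{rem3.1} and Definition \ref{def2.1} it suffices to produce $f\in X^p$ with $f\ge 0$, $f=0$ on $\mathbb{R}^n\times(-\infty,0)$, and $f\le (J_\alpha f)^\lambda$, for then $u:=J_\alpha f\in Y^p_\alpha$ solves \eqref{3.2}, \eqref{3.3} with $(\partial_t-\Delta)^\alpha u=f$. Moreover $u^\lambda\ge f\ge 0$, so it is enough to arrange $\|f\|_{L^q(R_j)}=\infty$; this forces both $\|(\partial_t-\Delta)^\alpha u\|_{L^q(R_j)}=\|f\|_{L^q(R_j)}$ and $\|u^\lambda\|_{L^q(R_j)}\ge\|f\|_{L^q(R_j)}$ to be infinite.

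Put $q_0=\frac{n+2}{2\alpha}\bigl(1-\frac1\lambda\bigr)$ and choose the critical order $\beta=1+\frac n2-\frac{\lambda\alpha}{\lambda-1}$. Since $(\lambda,\alpha)\in C$ gives $\alpha<\frac{n+2}{2p}\bigl(1-\frac1\lambda\bigr)\le\frac{n+2}2\bigl(1-\frac1\lambda\bigr)$, we have $\beta>0$, so $\Phi_\beta$ is defined by \eqref{2.2}, and the semigroup identity $\Phi_\alpha*\Phi_\beta=\Phi_{\alpha+\beta}$ (the convolution form of Theorem \ref{thm2.2}(ii), immediate from \eqref{2.2}) gives $J_\alpha\Phi_\beta=\Phi_{\alpha+\beta}$. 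The point of this $\beta$ is scaling: writing $\theta=t-t_0$, on a parabolic cone $|x|^2\le\theta$ the powers of $\theta$ in $\Phi_\beta(x,\theta)$ and in $\Phi_{\alpha+\beta}(x,\theta)^\lambda$ agree exactly, so
\[
\Phi_\beta\le C_0\,\Phi_{\alpha+\beta}^\lambda\qquad\text{on }\{|x|^2\le\theta\}
\]
with a \emph{scale-independent} constant $C_0$. The same exponent bookkeeping shows that near its singularity $\Phi_\beta\in L^r$ precisely when $r<q_0$ and $\Phi_\beta\notin L^r$ when $r\ge q_0$; this is exactly where the threshold $q_0$ and the hypothesis $p<q_0$ (equivalent to $(\lambda,\alpha)\in C$) enter.

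Next I would place the spikes. Fix a well-separated sequence $t_j\to\infty$ in $(1,\infty)$ (e.g. $t_j=2^j$) and small $\delta_j>0$ so that the pairwise disjoint sets $E_j:=\{(x,t):0<t-t_j<\delta_j,\ |x|^2<t-t_j\}$ satisfy $E_j\subset R_j$. Set $f_j=A\,\Phi_\beta(x,t-t_j)\chi_{E_j}$ and $f=\sum_j f_j$, with $A$ a large constant fixed below. Since $f\le A\sum_j\Phi_\beta(\cdot,\cdot-t_j)$ and only finitely many $E_j$ meet $\mathbb{R}^n\times(-\infty,T)$, the $L^r$ computation above with $r=p<q_0$ gives $f\in X^p$; and $f$ is supported in $t>1$, so \eqref{3.3} holds. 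On $E_j$ we have $f=A\Phi_\beta$, and because $q\ge q_0$ the same computation yields $\|f\|_{L^q(R_j)}\ge\|A\Phi_\beta(\cdot,\cdot-t_j)\|_{L^q(E_j)}=\infty$, which is the desired blow-up.

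It remains to verify $f\le(J_\alpha f)^\lambda$. Off $\bigcup_j E_j$ this is trivial. On $E_j$, since $J_\alpha f\ge J_\alpha f_j=:u_j\ge0$ (all terms are nonnegative), it suffices to prove a lower bound $u_j\ge c\,\Phi_{\alpha+\beta}(\cdot,\cdot-t_j)$ on $E_j$ for some $c>0$, for then $u_j^\lambda\ge c^\lambda\Phi_{\alpha+\beta}^\lambda\ge c^\lambda C_0^{-1}\Phi_\beta\ge f_j$ as soon as $A^{\lambda-1}\ge C_0c^{-\lambda}$. This lower bound is the main obstacle. Here $u_j=\Phi_\alpha*(\Phi_\beta\chi_{E_j})$, and relative to the full convolution $\Phi_{\alpha+\beta}=\Phi_\alpha*\Phi_\beta$ it discards the contribution of the Gaussian tail $\{|\xi|^2\ge t-t_j\}$ of $\Phi_\beta$; I would show, by a direct heat-kernel estimate using that the mass of $\Phi_\beta(\cdot,\sigma)$ concentrated in $\{|\xi|^2<\sigma\}$ is a fixed positive fraction of its total mass for every $\sigma$, that the discarded part is at most $(1-c)\Phi_{\alpha+\beta}$ on $E_j$. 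The genuinely delicate point is making this fraction bound uniform up to the lateral boundary $|x|^2\approx t-t_j$ of the cone, where $\Phi_{\alpha+\beta}$ is itself small; this is handled by widening the aperture of the cone supporting $f_j$ while proving the lower bound only on the narrower cone $\{|x|^2<t-t_j\}$, so that at the evaluation points the Gaussian factors of both $\Phi_\alpha$ and $\Phi_\beta$ keep the retained mass comparable to $\Phi_{\alpha+\beta}$. Choosing $A$ large then closes the argument.
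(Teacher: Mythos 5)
Your proposal is correct, and it reaches Theorem \ref{thm3.6} by a genuinely different construction than the paper's. The paper proves the $J_\alpha$ reformulation (Theorem \ref{thm4.6}) by superposing a global self-similar \emph{background} $f_0(x,t)=t^{-(\frac{n+2}{2p}-\gamma)}\raisebox{2pt}{$\chi$}_{\{|x|^2<t\}}$ (Lemma \ref{lem7.6}, with $\gamma$ chosen so that $f_0\le C(J_\alpha f_0)^\lambda$ everywhere) with spikes $f_j=(T_j-t)^{-(\frac{n+2}{2p}-\gamma)}\raisebox{2pt}{$\chi$}_{\Omega_j}$ on \emph{backward} parabolic cones blowing up at the terminal times $T_j^-$; those spikes self-dominate only on the upper halves $\Omega_j^+$ (Lemma \ref{lem7.7}) and must be absorbed by the background on the lower halves $\Omega_j^-$, which is what the chain of estimates \eqref{8.36}--\eqref{8.37} accomplishes. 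You instead place the singularity at the \emph{initial} time of a forward cone; since $J_\alpha$ averages over the past, each spike has already accumulated its most singular mass by the time it is evaluated, so it dominates itself on its entire support and no background is needed. Your critical order $\beta=1+\frac n2-\frac{\lambda\alpha}{\lambda-1}$ makes $\Phi_\beta\sim\theta^{-\frac{n+2}{2q_0}}$ on the cone with $q_0=\frac{n+2}{2\alpha}(1-\frac1\lambda)$, which is exactly the exponent $\frac{n+2}{2p}-\gamma=\frac{n+2}{2q_0}$ appearing at the end of the paper's proof, so the integrability threshold is identical. The lower bound you flag as the main obstacle is in fact exactly the paper's Lemma \ref{lem7.4}: for $|x|^2<\theta$ and $\theta/4<\sigma<3\theta/4$ the retained mass $\int_{|\xi|^2<\sigma}\Phi_1(x-\xi,\theta-\sigma)\,d\xi$ is bounded below by $C(n)$, and integrating $(\theta-\sigma)^{\alpha-1}\sigma^{\beta-1-n/2}$ over that middle range gives $J_\alpha f_j\ge cA\,\theta^{\alpha+\beta-1-n/2}$ on the full cone, so no widening of the aperture is actually required. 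What each approach buys: yours is more modular (disjointly supported spikes, no interaction estimates, no $\Omega_j^\pm$ splitting, and with $\delta_j$ summably small it even yields $f\in L^p(\mathbb{R}^n\times\mathbb{R})$ rather than merely $f\in X^p$), while the paper's background-plus-spikes template is reused almost verbatim to prove the $t\to0^+$ blow-up result (Theorems \ref{thm3.5}/\ref{thm4.5}), so one construction serves two theorems. The only cosmetic point is that the equivalence of \eqref{3.3} with $f=0$ on $\mathbb{R}^n\times(-\infty,0)$ rests on Theorem \ref{thm2.3}(ii), which you should cite alongside Definition \ref{def2.1} in your reduction.
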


\section{$J_\alpha$ version of fully fractional initial value problems}\label{sec4}
In order to prove our results stated in Section \ref{sec3}, we will
first reformulate them in terms of the inverse $J_\alpha$ of the fractional heat operator \eqref{2.14} as follows.

Suppose that $\lambda>0$ and, as assumed in Definition \ref{def2.1}
and Theorems \ref{thm3.1}--\ref{thm3.6}, that $p$ and $\alpha$ satisfy
\eqref{2.12}.  Then, by Theorem \ref{thm2.3}, $u$ satisfies
\eqref{3.1}--\eqref{3.3} if and only if
$f:=(\partial_t-\Delta)^\alpha u$ satisfies
\begin{equation}\label{4.1}
 f\in X^p
\end{equation}
\begin{equation}\label{4.2}
 0\leq f\leq(J_\alpha f)^\lambda \quad\text{in } \mathbb{R}^n \times\mathbb{R}
\end{equation}
\begin{equation}\label{4.3}
 f=0\quad\text{in } \mathbb{R}^n \times(-\infty,0).
\end{equation}

Thus the two problems \eqref{3.1}--\eqref{3.3} and
\eqref{4.1}--\eqref{4.3} are equivalent under the transformation
$u=J_\alpha f$ when $p$ and $\alpha$ satisfy \eqref{2.12}.  This
restriction on $p$ and $\alpha$ was imposed so that $J_\alpha f$ would
be defined pointwise in $\mathbb{R}^n \times\mathbb{R}$ for all
$f\in X^p$.  If $p\geq1$ and $\alpha>0$ do not satisfy \eqref{2.12},
that is, if 
\begin{equation}\label{4.3.5}
\left(p>1  \text{ and }   \alpha\geq\frac{n+2}{2p}\right)\quad\text{or} \quad \left(p=1
  \text{ and }  
\alpha>\frac{n+2}{2p}\right) 
\end{equation}
then $J_\alpha f$ is generally not defined pointwise as an extended
real valued function for 
$f\in X^p$.  (However
it can be defined for all $f$ in the subspace $L^p (\mathbb{R}^n \times\mathbb{R})$
of $X^p$
as a distribution on a certain subspace of the Schwarz space $S$
(see \cite[Sec 9.2.5]{SK}).

Even though $J_\alpha f$ is generally not defined pointwise as and extended
real valued function for $f\in X^p$
when $p$ and $\alpha$ satisfy \eqref{4.3.5}, it is defined pointwise
as a nonnegative extended real value function for all {\it
nonnegative} functions $f\in X^p$ for all $p\geq1$ and $\alpha>0$
because then the integrand of $J_\alpha f$ is a nonnegative function.
Hence, since $f$ is nonnegative in the problem
\eqref{4.1}--\eqref{4.3}, we see that the problem
\eqref{4.1}--\eqref{4.3} makes sense for all $p\geq 1$ and $\alpha>0$
when $J_\alpha$ is defined in the pointwise sense, which is the sense
in which we will define it in this section.  However
$J_\alpha$, when restricted to the set $X^{p}_{+}$ of all nonnegative
functions $f\in X^p$, is not one-to-one when $p$ and $\alpha$ satisfy
\eqref{4.3.5}.  Thus our results in this section for the problem
\eqref{4.1}--\eqref{4.3} when $p\geq1$ and $\alpha>0$ will yield
corresponding results for the problem \eqref{3.1}--\eqref{3.3} only
when $p$ and $\alpha$ satisfy \eqref{2.12}.

In view of these remarks, we will consider in this section solutions
\begin{equation}\label{4.4}
 f\in X^p
\end{equation}
of the following $J_\alpha$ version of the fully fractional initial
value problem \eqref{3.2}, \eqref{3.3}: 
\begin{equation}\label{4.5}
 0\leq f\leq K(J_\alpha f)^\lambda \quad\text{in } \mathbb{R}^n \times\mathbb{R},\,n\geq1
\end{equation}
\begin{equation}\label{4.6}
 f=0\quad\text{in } \mathbb{R}^n \times(-\infty, 0)
\end{equation}
where
\begin{equation}\label{4.7}
 p\in[1,\infty) \quad\text{ and }\quad   K,\lambda,\alpha\in(0,\infty)
\end{equation}
are constants, $X^p$ is defined by \eqref{2.9}, and $J_\alpha$ is given by \eqref{2.4}.

Under the equivalence of problems \eqref{3.1}--\eqref{3.3} and \eqref{4.1}--\eqref{4.3} discussed above, the following Theorems \ref{thm4.1}--\ref{thm4.6}, when restricted to the case that $p$ and $\alpha$ satisfy \eqref{2.12} and $ K=1$, clearly imply Theorems \ref{thm3.1}--\ref{thm3.6} in 
Section \ref{sec3}.  We will prove Theorems \ref{thm4.1}--\ref{thm4.6} in Section \ref{sec8}.

\begin{thm}\label{thm4.1}
 Suppose $(\lambda,\alpha)\in A$ and $f,p$, and $ K$ satisfy \eqref{4.4}--\eqref{4.7}.  Then
 \begin{equation}\label{4.8}
  f=J_\alpha f=0\quad\text{almost everywhere in }\mathbb{R}^n \times\mathbb{R}.
 \end{equation}
\end{thm}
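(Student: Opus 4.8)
The plan is to pass to the potential $u:=J_\alpha f$, which by \eqref{4.5} and the nonnegativity of $f$ is a well-defined nonnegative (extended real valued) function satisfying $u\le K\,J_\alpha(u^\lambda)$, and which vanishes for $t\le 0$ because $\Phi_\alpha$ is supported in $\{t>0\}$ (see \eqref{2.2}) and $f=0$ there by \eqref{4.6}. The whole difficulty is to show $u\equiv 0$; once this is known, $0\le f\le Ku^\lambda=0$ gives $f=0$ a.e. as well, which is \eqref{4.8}.

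First I would run a bootstrap on the integrability of $f$ in the scale $X^{p_k}$. Writing $c=\tfrac{2\alpha}{n+2}$, the Gopala Rao estimate quoted in Remark \ref{rem2.1} gives, for $f\in L^{p_k}(\mathbb{R}^n\times\mathbb{R}_T)$ with $1/p_k>c$, that $u\in L^{q_k}(\mathbb{R}^n\times\mathbb{R}_T)$ with $1/q_k=1/p_k-c$, and then $f\le Ku^\lambda$ forces $f\in L^{p_{k+1}}(\mathbb{R}^n\times\mathbb{R}_T)$ with $1/p_{k+1}=\lambda(1/p_k-c)$. Setting $x_k=1/p_k$ and $x^\ast=\lambda c/(\lambda-1)$, one computes $x_k-x^\ast=\lambda^k(x_0-x^\ast)$; the hypothesis $(\lambda,\alpha)\in A$ is, for $\lambda>1$, exactly the statement $x_0=1/p<x^\ast$, which makes $x_k$ strictly decreasing to $-\infty$ (for $\lambda=1$ region $A$ reads $\alpha>0$, and $x_k$ decreases arithmetically by $c$, so the bootstrap runs automatically). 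Hence after finitely many steps $x_k<c$, i.e. $\alpha>\tfrac{n+2}{2p_k}$. At that stage a direct computation shows $\Phi_\alpha(x-\cdot,t-\cdot)\in L^{p_k'}(\mathbb{R}^n\times(0,T))$ uniformly in $x$ (the borderline being precisely $\alpha=\tfrac{n+2}{2p_k}$), so Hölder's inequality applied to \eqref{4.5} yields the genuinely uniform bound $u\in L^\infty(\mathbb{R}^n\times(0,T))$ for every $T>0$. Choosing each $p_{k+1}$ slightly below its optimal value lets me overshoot to a strictly supercritical exponent and dodge the critical corner; the endpoint $p=1$ needs the same small care.

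With $u$ bounded on time strips I would reduce to a scalar fractional integral inequality. Put $\Psi(t)=\|u\|_{L^\infty(\mathbb{R}^n\times(0,t))}$, which is finite and nondecreasing. Bounding $u(\xi,\tau)^\lambda\le\Psi(t)^\lambda$ for $\tau\le t$ and using $\int_{\mathbb{R}^n}\Phi_\alpha(x-\xi,t-\tau)\,d\xi=\tfrac{(t-\tau)^{\alpha-1}}{\Gamma(\alpha)}$ in \eqref{4.5} gives
\[
\Psi(t)\le K\int_0^t\frac{(t-\tau)^{\alpha-1}}{\Gamma(\alpha)}\,\Psi(\tau)^\lambda\,d\tau\le \frac{K\,t^\alpha}{\Gamma(\alpha+1)}\,\Psi(t)^\lambda .
\]
When $\lambda>1$ this dichotomy shows that at each $t$ either $\Psi(t)=0$ or $\Psi(t)\ge(\Gamma(\alpha+1)/K)^{1/(\lambda-1)}t^{-\alpha/(\lambda-1)}$; since $\Psi$ is nondecreasing and finite, the second alternative is impossible for small $t$, so $\Psi\equiv 0$ near $0$. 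When $\lambda=1$ the first inequality is a fractional Gronwall inequality; iterating it $m$ times (using the semigroup property of the Riemann--Liouville integral) gives $\Psi(t)\le (Kt^\alpha)^m\Psi(t)/\Gamma(m\alpha+1)\to 0$, so again $\Psi\equiv 0$. Finally a continuation argument---setting $t^\ast=\sup\{T:u=0\text{ a.e. on }\mathbb{R}^n\times(-\infty,T)\}$ and repeating the local estimate with base point $t^\ast$ (replacing $t^\alpha$ by $(t-t^\ast)^\alpha$)---shows $t^\ast=\infty$, whence $u=J_\alpha f\equiv 0$ and $f=0$ a.e.

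The main obstacle is the bootstrap of the second paragraph: one must check that the integrability indices improve at every step and terminate at $L^\infty$ after finitely many steps, and this is exactly where the defining inequality $\alpha>\tfrac{n+2}{2p}(1-1/\lambda)$ of region $A$ enters, since it is equivalent to $x_0<x^\ast$, i.e. to the first iteration being a strict gain. The remaining points are analytic rather than conceptual: keeping all bounds uniform in $x$ (handled by working on the strips $\mathbb{R}^n\times\mathbb{R}_T$, on which the Gopala Rao estimate is global in $x$) and avoiding the critical exponents at the two ends of the iteration.
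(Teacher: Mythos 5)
Your argument is correct and follows essentially the same route as the paper: your integrability bootstrap is the paper's Lemma \ref{lem7.3} (iterating the $L^p\to L^q$ mapping of the kernel, with the defining inequality of $A$ guaranteeing a strict gain at each step and an $\varepsilon$-perturbation to dodge critical exponents), and your short-time estimate $\Psi(t)\le K t^\alpha\Psi(t)^\lambda/\Gamma(\alpha+1)$ combined with $\lambda\ge1$ and a continuation argument is exactly the paper's use of Lemma \ref{lem7.1} at the first time $f$ becomes nonzero, phrased as a dichotomy rather than a contradiction. The only cosmetic differences are that you work with $u=J_\alpha f$ instead of $f$ and treat $\lambda=1$ by an iterated fractional Gronwall inequality, neither of which changes the substance.
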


\begin{thm}\label{thm4.2}
 Suppose $(\lambda,\alpha)\in B$ and $f,p$, and $ K$ satisfy \eqref{4.4}--\eqref{4.7}.  Then for all $b>0$ we have
 \begin{equation}\label{4.9}
  \| f\|_{L^\infty (\mathbb{R}^n \times(0,b))}\leq K^{\frac{1}{1-\lambda}}(Mb^\alpha )^{\frac{\lambda}{1-\lambda}}
 \end{equation}
 and
 \begin{equation}\label{4.10}
  \| J_\alpha f\|_{L^\infty (\mathbb{R}^n \times(0,b))}\leq K^{\frac{1}{1-\lambda}}(Mb^\alpha )^{\frac{1}{1-\lambda}}
 \end{equation}
 where
 \begin{equation}\label{4.11}
  M=M(\alpha,\lambda)=\frac{\Gamma(\frac{\alpha\lambda}{1-\lambda}+1)}{\Gamma(\alpha+\frac{\alpha\lambda}{1-\lambda}+1)}.
 \end{equation}
\end{thm}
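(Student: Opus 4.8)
The plan is to reduce the fully fractional inequality to a one‑dimensional (in time) fractional integral inequality, identify the explicit self‑similar extremal which produces the constant $M$, and then squeeze an arbitrary solution against this extremal by a monotone iteration.

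Write $u=J_\alpha f\ge 0$, so that $u$ vanishes for $t<0$ and, for $t>0$,
\[
u(x,t)=\int_0^t\!\!\int_{\mathbb{R}^n}\Phi_\alpha(x-\xi,t-\tau)f(\xi,\tau)\,d\xi\,d\tau .
\]
The first point is that $\int_{\mathbb{R}^n}\Phi_\alpha(y,s)\,dy=s^{\alpha-1}/\Gamma(\alpha)$ for $s>0$ (the spatial Gaussian has unit mass), so $J_\alpha$ acts on functions of $t$ alone as the Riemann–Liouville integral $I^\alpha h(t)=\frac{1}{\Gamma(\alpha)}\int_0^t(t-\tau)^{\alpha-1}h(\tau)\,d\tau$, which satisfies $I^\alpha t^\gamma=\frac{\Gamma(\gamma+1)}{\Gamma(\gamma+1+\alpha)}t^{\gamma+\alpha}$. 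A direct check using this identity shows that $V(t)=(KM)^{1/(1-\lambda)}t^{\alpha/(1-\lambda)}$ solves $V=K I^\alpha(V^\lambda)$ exactly, and that $V(b)$ and $KV(b)^\lambda$ are precisely the right‑hand sides of \eqref{4.10} and \eqref{4.9}; this is the origin of the constant $M$ in \eqref{4.11}. Feeding the constraint $0\le f\le K u^\lambda$ into the formula for $u$, bounding $u(\xi,\tau)^\lambda$ by $v(\tau)^\lambda$ with $v(\tau):=\operatorname{ess\,sup}_{x}u(x,\tau)$, and using the unit‑mass identity to integrate out $\xi$, I obtain the scalar inequality $v(t)\le K I^\alpha(v^\lambda)(t)$.

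The technical heart — and the step I expect to be the main obstacle — is to first show that $u\in L^\infty(\mathbb{R}^n\times(0,b))$ for every $b>0$, so that $v$ is finite and $I^\alpha(v^\lambda)$ makes sense. A priori $f$ only lies in $X^p$, and \eqref{2.12} puts us in exactly the regime where the crude Hölder/Young bound on $J_\alpha f$ fails to give boundedness, so the gain must come from the nonlinearity. The plan is a bootstrap: starting from $f\in L^p$ on the strip, Theorem \ref{thm2.2}(i) together with the Gopala Rao smoothing estimate of Remark \ref{rem2.1} upgrades the integrability of $u=J_\alpha f$, and since $\lambda<1$ the bound $f\le Ku^\lambda$ upgrades the integrability of $f$ by a further factor $1/\lambda>1$. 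Iterating, the reciprocal exponents decrease strictly and cross the threshold $1/p=2\alpha/(n+2)$ after finitely many steps, past which the explicit Gaussian decay of $\Phi_\alpha$ yields a bound on $u$ that is uniform in $x$ on each finite time strip. The delicate points are keeping the gain global in $x$ (Remark \ref{rem2.1} is stated locally) and treating the endpoint $p=1$ separately.

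Once $v\le A$ on $(0,b]$ is available, the sharp constants come out by a monotone iteration. Let $T[w]:=K I^\alpha(w^\lambda)$, which is monotone increasing since $s\mapsto s^\lambda$ is increasing and $I^\alpha$ is positivity preserving. From $v\le T[v]$ and $v\le A$ one gets $v\le T^k[A]$ for all $k$ by induction, and the scaling identity for $I^\alpha$ gives $T^k[A](t)=C_k t^{\beta_k}$ with $\beta_{k+1}=\lambda\beta_k+\alpha$, $\beta_0=0$, and $C_{k+1}=K C_k^\lambda\,\Gamma(\lambda\beta_k+1)/\Gamma(\lambda\beta_k+1+\alpha)$. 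Since $0<\lambda<1$ we have $\beta_k\uparrow\alpha/(1-\lambda)$, and the recursion for $\log C_k$ is linear with contraction factor $\lambda$ and convergent forcing term, so $C_k\to(KM)^{1/(1-\lambda)}$. Letting $k\to\infty$ for each fixed $t\in(0,b]$ yields $v(t)\le(KM)^{1/(1-\lambda)}t^{\alpha/(1-\lambda)}=V(t)$, which is \eqref{4.10} after taking the supremum in $t$ (the bound is increasing in $t$, so the strip supremum is attained at $t=b$). Finally, inserting this into $f\le Ku^\lambda\le Kv^\lambda$ gives \eqref{4.9} by the same monotonicity.
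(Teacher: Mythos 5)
Your strategy is essentially the paper's: establish an a priori $L^\infty$ bound on each time strip by bootstrapping integrability exponents (the gain coming from $\lambda<1$), reduce to a scalar Riemann--Liouville inequality by integrating out the spatial Gaussian, and iterate toward the exact self-similar solution $V(t)=(KMt^\alpha)^{1/(1-\lambda)}$ to extract the sharp constant $M$; your monotone iteration (evolving exponent $\beta_k\uparrow\alpha/(1-\lambda)$ from a constant starting bound) is a harmless variant of the paper's, which first uses the crude bound $\|J_\alpha f\|_{L^\infty}\le \frac{b^\alpha}{\Gamma(\alpha+1)}\|f\|_{L^\infty}$ self-consistently to lock in the correct power $t^{\alpha\lambda/(1-\lambda)}$ at the first step and then iterates only the constant $\gamma_{j+1}=(\bar M\gamma_j)^\lambda$. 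The one place your write-up is not self-contained is exactly the step you flag as the ``technical heart'': the paper does not use the sharp local Gopala Rao smoothing of Remark \ref{rem2.1} at all, but instead proves a \emph{global-in-$x$} strip estimate (Lemma \ref{lem7.2}) by Young's inequality applied to the truncated kernel $\Phi_\alpha\raisebox{2pt}{$\chi$}_{(0,b-a)}$, valid for any $0\le \frac1p-\frac1q<\frac{2\alpha}{n+2}$ including $p=1$ and $q=\infty$; the sub-sharp gain $\frac{2\alpha}{n+2}-\varepsilon'$ still terminates the bootstrap in finitely many steps precisely because $\lambda<1$ gives a multiplicative margin. This simultaneously disposes of both of your ``delicate points.'' One further case your sketch omits: Theorem \ref{thm4.2} is stated under \eqref{4.7}, not \eqref{2.12}, so $\alpha\ge\frac{n+2}{2p}$ is allowed; the paper reduces this to the previous case on finite time strips via the pointwise comparison $\Phi_\alpha(x,t)\le C(T,\alpha,\widehat\alpha)\,\Phi_{\widehat\alpha}(x,t)$ for $0<t<T$ with $\widehat\alpha<\frac{n+2}{2p}$ (Case II of Lemma \ref{lem7.3}). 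With those two repairs your argument is complete and correct.
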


\begin{thm}\label{thm4.3}
 Suppose $p$ and $ K$ satisfy \eqref{4.7}, $(\lambda,\alpha)\in B$, $T>0$, and $0<N<M$ where $M$ is given by \eqref{4.11}.  Then there exists a solution
 \begin{equation}\label{4.12}
  f\in L^p (\mathbb{R}^n \times\mathbb{R})\cap C(\mathbb{R}^n \times\mathbb{R})
 \end{equation}
 of \eqref{4.5}, \eqref{4.6} such that
 \begin{equation}\label{4.12.5}
  J_\alpha f\in C(\mathbb{R}^n \times\mathbb{R})
 \end{equation}
 \begin{equation}\label{4.13}
  f(0,t)= K^{\frac{1}{1-\lambda}}(Nt^\alpha )^{\frac{\lambda}{1-\lambda}}\quad\text{for }0<t<T
 \end{equation}
 and
 \begin{equation}\label{4.14}
  J_\alpha f(0,t)\geq K^{\frac{1}{1-\lambda}}(Nt^\alpha )^{\frac{1}{1-\lambda}}\quad\text{for }0<t<T.
 \end{equation}
\end{thm}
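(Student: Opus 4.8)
The plan is to reduce the construction to a one-dimensional (in time) self-similar model, solve that model exactly, and then localize in order to restore the $L^p$ integrability, using the strict inequality $N<M$ to absorb the localization error.

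First I would record the reduction that makes the problem tractable. Since $\int_{\mathbb{R}^n}\Phi_\alpha(x-\xi,t-\tau)\,d\xi=\frac{(t-\tau)^{\alpha-1}}{\Gamma(\alpha)}$ for $\tau<t$ (the spatial Gaussian integrates to $1$), any $f$ independent of $x$, say $f(\cdot,\tau)=h(\tau)$ with $h$ supported in $(0,\infty)$, satisfies $J_\alpha f(x,t)=J_{\alpha,0,1}h(t)$, the Riemann--Liouville integral. Setting $\beta:=\frac{\alpha\lambda}{1-\lambda}>0$ and using the beta-integral identity $J_{\alpha,0,1}(t^\beta)=M\,t^{\alpha+\beta}$ with $M$ as in \eqref{4.11} and $\alpha+\beta=\frac{\alpha}{1-\lambda}$, the power function $h(t)=c\,t^\beta$ gives $J_\alpha f(x,t)=cM\,t^{\alpha+\beta}$, and inequality \eqref{4.5} at such $f$ collapses to the single scalar condition $c^{1-\lambda}\le KM^\lambda$, with equality at the extremal coefficient $c_M:=K^{1/(1-\lambda)}M^{\lambda/(1-\lambda)}$. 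This exhibits the extremal profile, but it is constant in $x$ and hence not in $L^p(\mathbb{R}^n\times\mathbb{R})$; this is precisely why the theorem asks only for $N<M$ and for an inequality (rather than equality) in \eqref{4.14}.

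For $N<M$ I would take the subcritical coefficient $c_N:=K^{1/(1-\lambda)}N^{\lambda/(1-\lambda)}<c_M$ and set $f(x,t)=c_N\,t^\beta\,\chi_{(0,\infty)}(t)\,\zeta(x,t)$, where $0\le\zeta\le 1$ is a smooth cutoff equal to $1$ on a large parabolic region containing a neighborhood of the segment $\{0\}\times(0,T)$ together with its backward heat history, and supported so that $f\in L^p(\mathbb{R}^n\times\mathbb{R})\cap C$. Since $\beta>0$, $f$ is continuous across $t=0$ and vanishes for $t<0$, giving \eqref{4.6} and the continuity in \eqref{4.12}; continuity of $J_\alpha f$ in \eqref{4.12.5} is then routine for $\Phi_\alpha*(\cdot)$. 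Because $\zeta(0,t)=1$ for $0<t<T$, identity \eqref{4.13} holds exactly. The key algebraic point, which I would isolate next, is that $(c_N/K)^{1/\lambda}=(KN)^{1/(1-\lambda)}=:c_N'$ and $c_NM=\frac{M}{N}\,c_N'$: this shows that both the differential inequality \eqref{4.5} at every point where $\zeta=1$ and the required lower bound \eqref{4.14} reduce to the single estimate $J_\alpha f(x,t)\ge c_N'\,t^{\alpha+\beta}$ on the support of $f$, while at points where $\zeta<1$ the requirement is only weaker, since $\zeta^{1/\lambda}\le\zeta$.

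The remaining and principal task is therefore to bound $J_\alpha f$ from below. Writing $J_\alpha f(x,t)\ge\iint_{\{\zeta=1\}\cap\{\tau<t\}}\Phi_\alpha(x-\xi,t-\tau)\,c_N\tau^\beta\,d\xi\,d\tau$ and comparing with the full space--time integral that produced $c_NM\,t^{\alpha+\beta}$ in the model, the localization costs only a factor tending to $1$ as the region $\{\zeta=1\}$ is enlarged; since the target carries the strictly smaller constant $c_N'=\frac{N}{M}\,c_NM$, the slack factor $M/N>1$ closes the estimate once the cutoff is chosen large and gentle enough. I expect this uniform lower bound on $J_\alpha f$ over the whole support of $f$ to be the main obstacle: it must hold simultaneously near the axis (where it yields \eqref{4.14}), in the interior (where it yields \eqref{4.5}), near the lateral and temporal boundaries of $\{\zeta=1\}$, and for large $t$, where $f$ is forced to decay for the sake of $L^p$ while $J_\alpha f$ retains its earlier mass through the memory of the kernel. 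Reconciling the flatness of $f$ near the axis (needed for the pointwise value and bound) with its decay at infinity (needed for integrability) is exactly where the hypothesis $N<M$ is used, and establishing the convolution lower bound uniformly across these regimes is the technical heart of the argument; the integrability $f\in L^p$ and the continuity statements are then routine.
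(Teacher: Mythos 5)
Your overall strategy coincides with the paper's: solve the $x$-independent model exactly (the power profile $g(t)=(Mt^\alpha)^{\lambda/(1-\lambda)}$ of Remark \ref{rem8.1}, which is your $c_M t^\beta$ up to the scaling of Remark \ref{rem8.2}), drop to the subcritical constant $N<M$ to create multiplicative slack, cut off in time past $t=T$, and damp in $x$ to gain $L^p$ integrability. Your algebraic reductions (the identity $J_\alpha(ct^\beta)=cMt^{\alpha+\beta}$, the equivalence of \eqref{4.5} at $\zeta=1$ and of \eqref{4.14} with the single bound $J_\alpha f\ge c_N' t^{\alpha+\beta}$, and the observation $\zeta^{1/\lambda}\le\zeta$) are all correct and match the paper's computations \eqref{8.8}--\eqref{8.10}.

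However, the one estimate that carries all the difficulty is left unproved, and the mechanism you propose for it does not work as stated. You claim that ``the localization costs only a factor tending to $1$ as the region $\{\zeta=1\}$ is enlarged.'' That is true at points well inside $\{\zeta=1\}$, but \eqref{4.5} must be verified at \emph{every} point of the support of $f$, in particular throughout the transition region where $\zeta$ is small; there the fraction of the heat-kernel mass captured by $\{\zeta=1\}$ (or by $\{\zeta\ge\zeta(x,t)\}$) does \emph{not} tend to $1$ --- it tends to $0$ as one approaches the boundary of $\{\zeta>0\}$ --- so the comparison with the full-space integral gives nothing there. The slack $\zeta^{1/\lambda}\le\zeta$ is the right extra ingredient, but it only closes the estimate if $\zeta$ varies slowly in a \emph{multiplicative} sense, i.e.\ $\zeta(\xi,\tau)/\zeta(x,t)$ is bounded below uniformly for $(\xi,\tau)$ in a parabolic neighborhood of $(x,t)$; no compactly supported cutoff has this property, and you never formulate or verify it. The paper resolves exactly this point by taking the spatial factor to be globally positive and exponentially decaying, $\varphi(\varepsilon x)=e^{-(\sqrt{1+|\varepsilon x|^2}-1)}$, which satisfies $\varphi(\varepsilon\xi)/\varphi(\varepsilon x)\ge e^{-\varepsilon\gamma\sqrt2}$ for $|\xi-x|<\gamma\sqrt2$; one then first chooses $\gamma$ large so that $(M/N)^{\lambda/2}I(\gamma)^\lambda>1$ (where $I(\gamma)\to1$ is the truncated Gaussian mass) and afterwards $\varepsilon$ small so that the loss $e^{-\varepsilon\gamma\lambda\sqrt2}$ is absorbed, giving \eqref{8.10}$\,\ge1$ uniformly on $\mathbb{R}^n\times(0,1+\delta)$. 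To complete your argument you would need either to adopt such a strictly positive, multiplicatively slowly varying weight, or to carry out a genuine boundary-layer analysis showing that the captured kernel mass decays no faster than $\zeta^{1/\lambda-1}$ near $\partial\{\zeta>0\}$; neither is present in the proposal.
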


\begin{thm}\label{thm4.4}
 Suppose $p$ and $ K$ satisfy \eqref{4.7} and $(\lambda,\alpha)\in B$.  Then there exists $N>0$ and
 $$f\in X^p$$
 satisfying \eqref{4.5}, \eqref{4.6} such that
 \begin{equation}\label{4.15}
  f(x,t)\geq K^{\frac{1}{1-\lambda}}(Nt^\alpha )^{\frac{\lambda}{1-\lambda}}\quad\text{for }|x|^2 <t
 \end{equation}
 and
 \begin{equation}\label{4.16}
  J_\alpha f(x,t)\geq K^{\frac{1}{1-\lambda}}(Nt^\alpha )^{\frac{1}{1-\lambda}}\quad\text{for }|x|^2 <t.
 \end{equation}
\end{thm}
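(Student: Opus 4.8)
The plan is to exhibit an explicit, almost self-similar solution. The starting point is the observation that on functions of $t$ alone the operator $J_\alpha$ reduces to the Riemann--Liouville integral $J_{\alpha,0,1}$, and $J_{\alpha,0,1}(t^\beta)=Mt^{\alpha+\beta}$ with $M$ as in \eqref{4.11} and $\beta:=\frac{\alpha\lambda}{1-\lambda}$. Since $(\lambda,\alpha)\in B$ gives $0<\lambda<1$, one checks $\beta>0$, $\alpha+\beta=\frac{\alpha}{1-\lambda}$, and $\lambda(\alpha+\beta)=\beta$, so the spatially constant profile $\bar f(x,t)=c\,t^\beta$ satisfies $\bar f=K(J_\alpha\bar f)^\lambda$ with equality when $c=K^{1/(1-\lambda)}M^{\lambda/(1-\lambda)}$. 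This profile is not in $X^p$, so I would localize it to $\Omega=\{|x|^2<t\}$: set
\[
f_0(x,t)=t^\beta\,\chi_{\Omega}(x,t),\qquad f=c'f_0 ,
\]
with $c'>0$ chosen below. Then $f_0=0$ on $\mathbb{R}^n\times(-\infty,0)$, so \eqref{4.6} holds, and $f_0\in X^p$ for every $p\in[1,\infty)$ because $\int_{\mathbb{R}^n\times\mathbb{R}_T}f_0^p=\omega_n\int_0^T t^{\beta p+n/2}\,dt<\infty$, the exponent $\beta p+n/2$ being $>-1$.

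The heart of the matter is a lower bound for $J_\alpha f_0$ on $\Omega$. I would first note that $J_\alpha f_0$ is parabolically homogeneous: using the change of variables $(\xi,\tau)\mapsto(\sigma\xi,\sigma^2\tau)$ in \eqref{2.4} together with $\Phi_\alpha(\sigma x,\sigma^2t)=\sigma^{2\alpha-2-n}\Phi_\alpha(x,t)$ and $f_0(\sigma x,\sigma^2 t)=\sigma^{2\beta}f_0(x,t)$, one gets $J_\alpha f_0(\sigma x,\sigma^2t)=\sigma^{2(\alpha+\beta)}J_\alpha f_0(x,t)$, hence
\[
J_\alpha f_0(x,t)=t^{\alpha+\beta}\,G\!\left(x/\sqrt t\,\right),\qquad G(z):=J_\alpha f_0(z,1).
\]
Bounding the inner $\xi$-integral by the full heat mass gives $0\le G(z)\le\frac{1}{\Gamma(\alpha)}\int_0^1(1-\tau)^{\alpha-1}\tau^\beta\,d\tau=M<\infty$, so $J_\alpha f_0$ is finite (note $\alpha$ need not satisfy \eqref{2.12}, so this finiteness must be checked by hand). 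Since the heat kernel is strictly positive on $\mathbb{R}^n$, the integrand defining $G(z)$ is positive on the positive-measure set $\{|\xi|^2<\tau<1\}$, whence $G(z)>0$ for every $z$. A dominated-convergence argument (the dominating function $\frac{1}{\Gamma(\alpha)}(1-\tau)^{\alpha-1}\tau^\beta$ is $\tau$-integrable, and $z\mapsto\int_{|\xi|<\sqrt\tau}\Phi_\alpha(z-\xi,1-\tau)\,d\xi$ is continuous as a convolution of an $L^1$ with an $L^\infty$ function) shows $G$ is continuous. Therefore $A:=\min_{|z|\le1}G(z)\in(0,M]$ is attained and positive, giving $J_\alpha f_0(x,t)\ge A\,t^{\alpha+\beta}$ for $(x,t)\in\Omega$.

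With this in hand I would set $c'=(KA^\lambda)^{1/(1-\lambda)}$ and $N=A$, and verify everything directly. Inequality \eqref{4.5} is trivial off $\Omega$ (there $f=0\le K(J_\alpha f)^\lambda$ since $J_\alpha f\ge0$) and for $t\le0$; on $\Omega$ it reduces, via $J_\alpha f\ge c'A\,t^{\alpha+\beta}$ and $\lambda(\alpha+\beta)=\beta$, to $c'\le K(c')^\lambda A^\lambda$, i.e. $(c')^{1-\lambda}\le KA^\lambda$, which holds with equality by the choice of $c'$. For the lower bounds, on $\Omega$ one has $f=c't^\beta=K^{1/(1-\lambda)}A^{\lambda/(1-\lambda)}t^\beta=K^{1/(1-\lambda)}(Nt^\alpha)^{\lambda/(1-\lambda)}$, which is exactly \eqref{4.15}, and $J_\alpha f\ge c'A\,t^{\alpha+\beta}=K^{1/(1-\lambda)}A^{1/(1-\lambda)}t^{\alpha+\beta}=K^{1/(1-\lambda)}(Nt^\alpha)^{1/(1-\lambda)}$, which is \eqref{4.16}. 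The same $f$ lies in $X^p$ for all admissible $p$, so no $p$-dependent adjustment is needed, and $N=A<M$ is consistent with the upper bound in Theorem \ref{thm4.2}.

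The main obstacle is the key lower bound $A>0$, that is, establishing continuity and strict positivity of the homogeneous profile $G$ on the closed unit ball; once $f$ is localized to $\Omega$ the self-similar scaling does all the exponent bookkeeping and the constants collapse cleanly to $N=A$. An alternative to the compactness argument, should continuity prove delicate, is to obtain a quantitative $A$ by restricting the integral in $J_\alpha f_0(x,t)$ to a fixed sub-parabola where $\Phi_\alpha$ is bounded below, yielding an explicit positive constant.
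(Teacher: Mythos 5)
Your proposal is correct and follows essentially the same route as the paper: both localize the exact self-similar profile $t^{\alpha\lambda/(1-\lambda)}$ (the function $g$ of Remark \ref{rem8.1}) to the parabolic region $\{|x|^2<t\}$ and then rescale by a constant so that \eqref{4.5} holds. The only difference is in how the key lower bound $J_\alpha f_0\geq A\,t^{\alpha+\beta}$ on that region is obtained---you use parabolic homogeneity plus continuity and compactness of the profile $G$, whereas the paper gets an explicit constant from Lemma \ref{lem7.4} by restricting to $t/4<\tau<3t/4$, which is exactly the quantitative alternative you mention at the end.
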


\begin{thm}\label{thm4.5}
 Suppose $p$ and $ K$ satisfy \eqref{4.7},
\begin{equation}\label{4.17}
(\lambda,\alpha)\in C\qquad and \qquad q\in(p,\infty].
\end{equation} 
Then there exists a solution
\begin{equation}\label{4.18}
  f\in L^p (\mathbb{R}^n \times\mathbb{R})
 \end{equation}
 of \eqref{4.5}, \eqref{4.6} and a sequence $\{t_j \}\subset(0,1)$ such that
 $$\lim_{j\to\infty}t_j =0$$ 
 and
 \begin{equation}\label{4.19}
  \| f\|_{L^q (R_j )}=\infty\quad\text{for }j=1,2,...,
 \end{equation}
 where
\begin{equation}\label{4.19.5} 
R_j =\{(x,t)\in\mathbb{R}^n
  \times\mathbb{R}:|x|<\sqrt{t_j} \text{ and }  t_j <t<2t_j \}.
\end{equation}
\end{thm}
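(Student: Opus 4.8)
The plan is to reduce the theorem to the construction of a single compactly supported \textbf{building block} and then generate $f$ by parabolic rescaling and summation. Set $Q_0=\{(x,t):|x|^2<t-1,\ 1<t<2\}$. I would first construct a nonnegative $\Psi\in L^p(\mathbb{R}^n\times\mathbb{R})$, supported in $Q_0$, with $\|\Psi\|_{L^q}=\infty$, and satisfying the pointwise inequality $\Psi\le K(J_\alpha\Psi)^\lambda$ in $\mathbb{R}^n\times\mathbb{R}$. Granting such a $\Psi$, put $t_j=2^{-j}$, $c_j=t_j^{-\alpha\lambda/(\lambda-1)}$, and define
\[
f=\sum_{j=1}^\infty f_j,\qquad f_j(x,t)=c_j\,\Psi\!\left(\tfrac{x}{\sqrt{t_j}},\tfrac{t}{t_j}\right).
\]
Since $|x|^2<t-t_j<t_j$ forces $|x|<\sqrt{t_j}$, each $f_j$ is supported in $R_j$; hence the $f_j$ have pairwise disjoint supports and $f=0$ for $t\le0$, so \eqref{4.6} holds.

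The verification of \eqref{4.5}, \eqref{4.18}, \eqref{4.19} rests on the parabolic scaling identity $J_\alpha[\Psi(\mu\,\cdot\,,\mu^2\cdot)](x,t)=\mu^{-2\alpha}(J_\alpha\Psi)(\mu x,\mu^2 t)$, which follows from the homogeneity $\Phi_\alpha(\mu x,\mu^2 t)=\mu^{2\alpha-n-2}\Phi_\alpha(x,t)$. With $\mu=t_j^{-1/2}$ this gives $J_\alpha f_j=c_j t_j^{\alpha}(J_\alpha\Psi)(x/\sqrt{t_j},t/t_j)$, so at a point of $R_j$, writing $y=(x/\sqrt{t_j},t/t_j)\in Q_0$, the bound $\Psi(y)\le K(J_\alpha\Psi(y))^\lambda$ yields $f_j=c_j\Psi(y)\le K c_j^\lambda t_j^{\alpha\lambda}(J_\alpha\Psi(y))^\lambda=K(J_\alpha f_j)^\lambda$ precisely because $c_j^{\lambda-1}t_j^{\alpha\lambda}=1$; since $J_\alpha f\ge J_\alpha f_j\ge0$ and $f=f_j$ a.e.\ on $R_j$, inequality \eqref{4.5} follows. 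A change of variables gives $\|f_j\|_{L^p}^p=c_j^p t_j^{(n+2)/2}\|\Psi\|_{L^p}^p$, whose $t_j$-exponent $\frac{n+2}{2}-\frac{p\alpha\lambda}{\lambda-1}$ is positive \textbf{exactly} when $(\lambda,\alpha)\in C$, so $\sum_j\|f_j\|_{L^p}^p<\infty$ and $f\in L^p(\mathbb{R}^n\times\mathbb{R})$. Finally $\|f\|_{L^q(R_j)}\ge\|f_j\|_{L^q(R_j)}=c_j t_j^{(n+2)/(2q)}\|\Psi\|_{L^q}=\infty$.

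For the building block I would take the forward--cone profile $\Psi=A\,(t-1)^{-\gamma/2}\,\Theta\!\big(|x|^2/(t-1)\big)$ on $Q_0$ (and $0$ elsewhere), where $\Theta$ is a smooth cutoff equal to $1$ on $[0,1/2]$ and supported in $[0,1)$, the exponent $\gamma$ is chosen in the interval $\big(\max\{\tfrac{2\alpha\lambda}{\lambda-1},\tfrac{n+2}{q}\},\tfrac{n+2}{p}\big)$, and $A\ge1$ is large. This interval is nonempty: $\tfrac{n+2}{q}<\tfrac{n+2}{p}$ because $q>p$, and $\tfrac{2\alpha\lambda}{\lambda-1}<\tfrac{n+2}{p}$ because $(\lambda,\alpha)\in C$. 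The choice $\gamma<\tfrac{n+2}{p}$ gives $\Psi\in L^p$, while $\gamma\ge\tfrac{n+2}{q}$ gives $\|\Psi\|_{L^q}=\infty$. The inequality $\Psi\le K(J_\alpha\Psi)^\lambda$ I would obtain from a lower bound $J_\alpha\Psi(x,t)\ge cA\,(t-1)^{-(\gamma-2\alpha)/2}$ valid on $\operatorname{supp}\Psi$ near the vertex $(0,1)$: combined with $\Psi\le A(t-1)^{-\gamma/2}$ and the relation $\gamma>\tfrac{2\alpha\lambda}{\lambda-1}$ (equivalently $\lambda(\gamma-2\alpha)>\gamma$), this forces the right side to dominate as $t\to1^+$ for every $A\ge1$; on the remaining compact part of $\operatorname{supp}\Psi$, where $\Psi$ is bounded and $J_\alpha\Psi$ is bounded below by a positive constant (positivity of the kernel plus lower semicontinuity), the inequality holds once $A$ is large, using $\lambda>1$.

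The main obstacle is establishing this lower bound on $J_\alpha\Psi$, because $J_\alpha$ is nonlocal and \textbf{backward in time}: the kernel $\Phi_\alpha(x-\xi,t-\tau)$ only sees $\tau<t$, so a symmetric (parabolic--ball) singularity would fail at points lying just above the singular set, where $\Psi$ is large but its temporal past is empty. Confining the profile to the forward paraboloid $\{|x|^2<t-1\}$ is exactly what repairs this. For $(x,t)$ in the cone, with $h=t-1$, I would integrate $\Phi_\alpha$ only over the slab $\tau\in(1+\tfrac{h}{4},1+\tfrac{3h}{4})$, on which $\Psi\gtrsim A h^{-\gamma/2}$ on a spatial ball of radius $\sim\sqrt h$ reached by the Gaussian with an $O(1)$ weight; the $\xi$- and $\tau$-integrations then produce the factor $h^{\alpha-\gamma/2}=(t-1)^{-(\gamma-2\alpha)/2}$, and the elementary inequality $\sqrt{\tau-1}+\sqrt{t-\tau}\ge\sqrt{t-1}$ shows the same past mass is reached from points out to the lateral boundary of the cone. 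Verifying that this estimate is uniform up to $\partial(\operatorname{supp}\Psi)$ and assembling it with the near-vertex asymptotics into the global inequality is the technical heart of the argument.
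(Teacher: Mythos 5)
Your argument is correct, and it is architecturally different from the paper's. The paper builds its solution as $f_0+\sum_j f_j$ where $f_0=t^{-r}\raisebox{2pt}{$\chi$}_{\{|x|^2<t<1\}}$ is a global forward-paraboloid profile (Lemma \ref{lem7.6}) and the $f_j=(T_j-t)^{-r}\raisebox{2pt}{$\chi$}_{\Omega_j}$ are \emph{backward} paraboloids singular at their top times $T_j\to 0$; since Lemma \ref{lem7.7} yields the lower bound on $J_\alpha f_j$ only on the upper halves $\Omega_j^+$, each $f_j$ fails to be a subsolution on its own near the bottom of its support and must be dominated there by $(J_\alpha f_0)^\lambda$ — this coupling forces the subsequence extractions in \eqref{8.25}, \eqref{8.26}, \eqref{8.28} and the preliminary reduction to $q\in(p,p+\varepsilon)$ so that the single exponent $r=\frac{n+2}{2q}$ can serve simultaneously for the $L^q$ blow-up and the subsolution property. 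Your construction inverts the geometry: each block is a \emph{forward} paraboloid singular at its bottom vertex, which is exactly the configuration in which the backward-in-time kernel sees a populated past (your slab estimate over $\tau\in(1+\tfrac h4,1+\tfrac{3h}4)$ is the same mechanism as Lemmas \ref{lem7.4} and \ref{lem7.6}), so each block is a genuine subsolution by itself; the exact self-similar normalization $c_j=t_j^{-\alpha\lambda/(\lambda-1)}$ then makes every rescaled copy a subsolution with the \emph{same} constant $K$, the disjointness of supports makes the sum a subsolution with no cross terms, and the hypothesis $(\lambda,\alpha)\in C$ appears in exactly one place, as positivity of the exponent $\frac{n+2}{2}-\frac{p\alpha\lambda}{\lambda-1}$ governing $\sum_j\|f_j\|_{L^p}^p$. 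Decoupling the profile exponent $\gamma$ from $\frac{n+2}{2q}$ also lets you treat all $q\in(p,\infty]$ at once, dispensing with the paper's reduction step. What you give up is reusability: the paper's two lemmas serve both Theorem \ref{thm4.5} and Theorem \ref{thm4.6}, whereas your purely self-similar, disjointly supported blocks are tailored to the $t\to 0^+$ regime where the $L^p$ norms must be summable. The one piece you leave as a sketch — the uniform lower bound $J_\alpha\Psi\geq cA(t-1)^{\alpha-\gamma/2}$ up to the lateral boundary of the cone — is carried out for the analogous profile in the paper's Lemmas \ref{lem7.4} and \ref{lem7.6}, and your outline of it is sound.
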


\begin{thm}\label{thm4.6}
 Suppose $p$ and $ K$ satisfy \eqref{4.7},
 \begin{equation}\label{4.20}
  (\lambda,\alpha)\in C \quad\text{ and }\quad  
\frac{n+2}{2\alpha}(1-\frac{1}{\lambda})\le q\le\infty.
 \end{equation}
 Then there exists a solution
 \begin{equation}\label{4.21}
  f\in X^p
 \end{equation}
 of \eqref{4.5}, \eqref{4.6} and a sequence $\{t_j \}\subset(1,\infty)$ such that
 $$\lim_{j\to\infty}t_j =\infty$$
 and
 \begin{equation}\label{4.22}
  \| f\|_{L^q (R_j )}=\infty\quad\text{for }j=1,2,...,
 \end{equation}
 where $R_j$ is given in \eqref{4.19.5}.
 
\end{thm}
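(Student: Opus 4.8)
The plan is to exhibit a single self-similar ``building block'' $f_0$ that is singular at the space-time origin with exactly the critical integrability, and then take a locally finite superposition of its time-translates placed at times $t_j\to\infty$. The superlinearity $\lambda>1$ available on region $C$ is precisely what lets such a superposition remain a subsolution of \eqref{4.5}. Throughout set $\gamma:=\frac{2\alpha\lambda}{\lambda-1}$ and $q_\ast:=\frac{n+2}{2\alpha}\bigl(1-\frac1\lambda\bigr)$, and note that from \eqref{2.2} one has $\Phi_\alpha(ax,a^2t)=a^{2\alpha-(n+2)}\Phi_\alpha(x,t)$, so $J_\alpha$ raises parabolic homogeneity by $2\alpha$ and the relation $2\alpha\lambda=\gamma(\lambda-1)$ makes the inequality \eqref{4.5} invariant under the rescaling $f\mapsto a^{-\gamma}f(x/a,t/a^2)$.

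First I would establish a preliminary lemma producing the building block. A concrete candidate is the explicit, spatially compactly supported, parabolically homogeneous function
\[
f_0(x,t)=A\,t^{-\gamma/2}\chi_{\{|x|^2<ct,\ t>0\}}(x,t),
\]
which satisfies $f_0(ax,a^2t)=a^{-\gamma}f_0(x,t)$ and $f_0\equiv0$ for $t\le0$. On the paraboloid (cf.\ the region $\Omega$ of Theorem \ref{thm4.4}) the Gaussian weight in $\Phi_\alpha$ is bounded below, and both $f_0$ and $(J_\alpha f_0)^\lambda$ are homogeneous of the \emph{same} degree $-\gamma$; hence verifying $f_0\le K(J_\alpha f_0)^\lambda$ on $\{|x|^2<ct\}$ reduces by homogeneity to the single cross-section $\{t=1,\ |x|^2<c\}$, where it reads $1\le KA^{\lambda-1}(J_\alpha f_0)^\lambda$ and is closed by taking $A$ large (using $\lambda>1$), once one shows $\inf_{\{t=1,|x|^2<c\}}J_\alpha f_0>0$. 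Off the support the inequality is trivial since the right-hand side is nonnegative.

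The role of region $C$ appears through the homogeneity identity $\int_{\mathbb{R}^n}f_0(x,t)^r\,dx=c_r\,t^{(n-r\gamma)/2}$, so $\int_0^\delta\!\int_{\mathbb{R}^n}f_0^r$ is finite iff $r<(n+2)/\gamma=q_\ast$ and infinite iff $r\ge q_\ast$. Since $(\lambda,\alpha)\in C$ is exactly $p<q_\ast$ (equivalently $\gamma<(n+2)/p$), we get $f_0\in X^p$ (recall \eqref{2.9}), while $\|f_0\|_{L^q(V)}=\infty$ for every parabolic neighborhood $V$ of the origin and every $q\in[q_\ast,\infty]$ --- precisely the range in \eqref{4.20}. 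Now choose any $t_j\in(1,\infty)$ with $t_j\to\infty$ and put $f:=\sum_j f_0(x,t-t_j)$. Because $t_j\to\infty$, on each slab $\mathbb{R}^n\times(-\infty,T)$ only finitely many summands are nonzero, so $f$ and (by Tonelli and the translation invariance of $J_\alpha$ evident from \eqref{2.4}) $J_\alpha f=\sum_j (J_\alpha f_0)(x,t-t_j)$ are locally finite; thus $f\in X^p$ and $f=0$ for $t\le0$ since each $t_j>0$, giving \eqref{4.21} and \eqref{4.6}. For \eqref{4.5}, translation invariance and superadditivity of $s\mapsto s^\lambda$ (valid as $\lambda>1$) give
\[
K(J_\alpha f)^\lambda=K\Bigl(\sum_j (J_\alpha f_0)(\cdot,\cdot-t_j)\Bigr)^\lambda\ge K\sum_j\bigl((J_\alpha f_0)(\cdot,\cdot-t_j)\bigr)^\lambda\ge\sum_j f_0(\cdot,\cdot-t_j)=f .
\]

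Finally, the translate $f_0(\cdot,\cdot-t_j)$ is singular at $(0,t_j)$ and blows up as $t\to t_j^+$, $x\to0$; for $t_j$ large a fixed-size one-sided parabolic half-ball about $(0,t_j)$ lies inside $R_j$ from \eqref{4.19.5}. Since $f\ge f_0(\cdot,\cdot-t_j)\ge0$ and the $L^q$ norm of $f_0$ over any parabolic neighborhood of its singular point is infinite for $q\ge q_\ast$, we obtain $\|f\|_{L^q(R_j)}=\infty$ for all $j$, which is \eqref{4.22}. I expect the only real work to be the preliminary lemma of the second paragraph: establishing the positive lower bound $\inf_{\{t=1,|x|^2<c\}}J_\alpha f_0>0$ together with its finiteness (the latter again forcing $\gamma<n+2$, hence region $C$) is the one genuine kernel computation, whereas the superposition argument and the $X^p$/blow-up bookkeeping are soft.
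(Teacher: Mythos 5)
Your proof is correct, and your building block is in fact identical to the paper's: with the choice of $\gamma$ in \eqref{8.29} one computes $\frac{n+2}{2p}-\gamma=\frac{\alpha\lambda}{\lambda-1}$, so the $f_0$ of Lemma \ref{lem7.6} used in the paper's proof is exactly your $A\,t^{-\gamma/2}\raisebox{2pt}{$\chi$}_{\{|x|^2<ct\}}$ (your $\gamma=\frac{2\alpha\lambda}{\lambda-1}$), and the ``one genuine kernel computation'' you isolate --- the two-sided bound $J_\alpha f_0\asymp t^{\alpha-\frac{\alpha\lambda}{\lambda-1}}$ on the paraboloid --- is precisely Lemma \ref{lem7.6} (via Lemma \ref{lem7.4}). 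Where you genuinely diverge is in how the singularities at $t_j\to\infty$ are produced. The paper superposes $f_0$ with \emph{backward}-paraboloid bubbles $f_j=(T_j-t)^{-(\frac{n+2}{2p}-\gamma)}\raisebox{2pt}{$\chi$}_{\Omega_j}$ blowing up as $t\to T_j^-$; these satisfy $f_j\le C(J_\alpha f_j)^\lambda$ only on the later portion $\Omega_j^+$ of their supports, so the background $f_0$ must be retained to dominate $f_j$ on $\Omega_j^-$ (inequality \eqref{8.37}), and a three-way case analysis over $\Omega_j^+$, $\Omega_j^-$, and the complement is required. You instead superpose time-translates $f_0(\cdot,\cdot-t_j)$ of the building block itself; since each translate satisfies the inequality \emph{everywhere} (by translation invariance of $J_\alpha$, evident from \eqref{2.4}) and $s\mapsto s^\lambda$ is superadditive for $\lambda\ge1$, the verification of \eqref{4.5} collapses to one line with no case analysis and no separate background term. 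This is cleaner for the present theorem, where only $f\in X^p$ is demanded in \eqref{4.21}: note your translates are \emph{not} in $L^p(\mathbb{R}^n\times\mathbb{R})$ globally, since the tail of $f_0$ fails to be $p$-integrable as $t\to\infty$ exactly because $(\lambda,\alpha)\in C$; this is why the paper's bubbles, compactly supported in time with small global $L^p$ norms, are the right objects for the companion Theorem \ref{thm4.5} but are dispensable here. The bookkeeping you call soft (local finiteness of the sum on each slab $\mathbb{R}^n\times\mathbb{R}_T$, hence $X^p$ membership; inclusion of a small forward paraboloid at $(0,t_j)$ inside $R_j$ from \eqref{4.19.5}; and the endpoint $q=\frac{n+2}{2\alpha}(1-\frac1\lambda)$, where the radial integral $\int_0^\delta s^{-1}\,ds$ diverges) all checks out.
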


\section{Preliminary results for fully fractional heat operators}\label{sec5}
In this section we provide some lemmas needed for the proofs of our
results in Section \ref{sec2} concerning the fully fractional heat
operator \eqref{2.14}.

The following lemma is needed for the proof of Theorem \ref{thm2.2}.
\begin{lem}\label{lem5.1}
 Suppose $\alpha,\beta>0$.  Then 
 \begin{equation}\label{5.1}
  \Phi_{\alpha+\beta}=\Phi_\alpha *\Phi_\beta 
\quad\text{in } \mathbb{R}^n \times\mathbb{R}
 \end{equation}
 where $\Phi_\alpha$ is defined in \eqref{2.2}.
\end{lem}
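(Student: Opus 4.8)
The plan is to compute the convolution $\Phi_\alpha*\Phi_\beta$ directly and recognize the outcome as $\Phi_{\alpha+\beta}$. Throughout I write $G_t(x)=(4\pi t)^{-n/2}e^{-|x|^2/(4t)}$ for the Gaussian heat kernel, so that $\Phi_\alpha(x,t)=\frac{t^{\alpha-1}}{\Gamma(\alpha)}G_t(x)$ for $t>0$ and $\Phi_\alpha(x,t)=0$ for $t\le0$, and similarly for $\beta$ and $\alpha+\beta$. Since $\alpha,\beta>0$ and the Gaussians are positive, every integrand appearing below is nonnegative, so all interchanges of integration are justified by Tonelli's theorem and each quantity is well defined in $[0,\infty]$; this is the only analytic point that needs remarking.

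First I would use the time-support of the kernels to cut down the region of integration. In
\[
(\Phi_\alpha*\Phi_\beta)(x,t)=\iint_{\mathbb{R}^n\times\mathbb{R}}\Phi_\alpha(x-\xi,t-\tau)\,\Phi_\beta(\xi,\tau)\,d\xi\,d\tau,
\]
the factor $\Phi_\beta(\xi,\tau)$ vanishes unless $\tau>0$ and $\Phi_\alpha(x-\xi,t-\tau)$ vanishes unless $t-\tau>0$. Hence the convolution is $0$ for $t\le0$, which already matches $\Phi_{\alpha+\beta}(x,t)=0$ there, and for $t>0$ the integral reduces to the region $\{(\xi,\tau):0<\tau<t,\ \xi\in\mathbb{R}^n\}$.

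Second, for each fixed $\tau\in(0,t)$ I would perform the spatial $\xi$-integration first. The $\xi$-dependence of each factor is purely Gaussian, so the inner integral is exactly the spatial convolution $G_{t-\tau}*G_\tau$ evaluated at $x$, and the Gaussian semigroup identity $G_s*G_r=G_{s+r}$ (verified by completing the square in the exponent) gives $\int_{\mathbb{R}^n}G_{t-\tau}(x-\xi)G_\tau(\xi)\,d\xi=G_t(x)$. Pulling out the scalar prefactors then yields
\[
(\Phi_\alpha*\Phi_\beta)(x,t)=\frac{G_t(x)}{\Gamma(\alpha)\Gamma(\beta)}\int_0^t(t-\tau)^{\alpha-1}\tau^{\beta-1}\,d\tau.
\]

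Third, I would evaluate the temporal integral via the substitution $\tau=ts$, which turns it into $t^{\alpha+\beta-1}$ times the Beta integral $\int_0^1(1-s)^{\alpha-1}s^{\beta-1}\,ds=B(\alpha,\beta)=\Gamma(\alpha)\Gamma(\beta)/\Gamma(\alpha+\beta)$. Substituting back, the $\Gamma(\alpha)\Gamma(\beta)$ factors cancel and I obtain $\frac{t^{\alpha+\beta-1}}{\Gamma(\alpha+\beta)}G_t(x)=\Phi_{\alpha+\beta}(x,t)$ for $t>0$, which together with the vanishing for $t\le0$ establishes \eqref{5.1}. The argument is entirely computational; there is no real obstacle beyond the standard Gaussian semigroup identity and the Beta-function evaluation.
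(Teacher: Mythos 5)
Your proof is correct. It follows the same skeleton as the paper's — reduce to $0<\tau<t$ by the time supports, integrate out the spatial variable for fixed $\tau$, then evaluate the remaining time integral as the Beta integral $\int_0^t(t-\tau)^{\alpha-1}\tau^{\beta-1}\,d\tau=\Gamma(\alpha)\Gamma(\beta)t^{\alpha+\beta-1}/\Gamma(\alpha+\beta)$ — but you carry out the spatial step in physical space via the Gaussian semigroup identity $G_{t-\tau}*G_\tau=G_t$, whereas the paper takes the spatial Fourier transform, uses the convolution theorem to turn the $\xi$-integral into the product $e^{-|y|^2(t-\tau)}e^{-|y|^2\tau}=e^{-|y|^2 t}$, and then concludes by matching $(\Phi_\alpha*\Phi_\beta)\widehat{\phantom{\Phi}}(\cdot,t)$ with $\widehat{\Phi}_{\alpha+\beta}(\cdot,t)$. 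Your route is slightly more self-contained: since every integrand is nonnegative, Tonelli alone justifies all interchanges and you obtain pointwise equality directly, with no need to invoke injectivity of the Fourier transform on $L^1(\mathbb{R}^n)$ to pass from equality of transforms back to equality of the functions. The paper's route, in exchange, leans on the already-recorded formula \eqref{5.2.5} for $\widehat{\Phi}_\alpha(\cdot,t)$ and so avoids verifying the semigroup identity by completing the square. Both are sound; yours is marginally the more elementary argument.
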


\begin{proof}
 Since
 \begin{align}\label{5.2}
  \notag &\Phi_\alpha *\Phi_\beta (x,t)=\int^{\infty}_{-\infty}\int_{\xi\in\mathbb{R}^n}\Phi_\alpha (x-\xi,t-\tau)\Phi_\beta (\xi,\tau)\, d\xi \, d\tau\\
  &=
  \begin{cases}
   0 & \text{for }(x,t)\in\mathbb{R}^n \times(-\infty,0]\\
   \int^{t}_{0}\int_{\xi\in\mathbb{R}^n}\Phi_\alpha (x-\xi,t-\tau)\Phi_\beta (\xi,\tau)\, d\xi \, d\tau & \text{for }(x,t)\in\mathbb{R}^n \times(0,\infty),
  \end{cases}
 \end{align}
 we have \eqref{5.1} holds in $\mathbb{R}^n \times(-\infty,0]$.
 
 Using the well-known facts that
\begin{equation}\label{5.2.5} 
\widehat{\Phi}_\alpha (\cdot,t)(y)=\frac{t^{\alpha-1}}{\Gamma(\alpha)}e^{-t|y|^2}\quad\text{for }t>0 \text{ and }  y\in\mathbb{R}^n
 \end{equation}
 and
 \begin{equation}\label{5.3}
  \int^{t}_{0}\frac{(t-\tau)^{\alpha-1}\tau^{\beta-1}}{\Gamma(\alpha)\Gamma(\beta)}d\tau =\frac{t^{\alpha+\beta-1}}{\Gamma(\alpha+\beta)}\quad\text{for }t,\alpha,\beta>0,
 \end{equation}
 and assuming we can interchange the order of integration in the following calculation (we will justify this after the calculation) we obtain for $t>0$ and $y\in\mathbb{R}^n$ that
 \begin{align}
  \notag &(\Phi_\alpha *\Phi_\beta )\widehat{\phantom{\Phi}}(\cdot,t)(y)\\
 \label{5.4}
 &=\int_{x\in\mathbb{R}^n}e^{ix\cdot y}\int^{t}_{0}\Biggl(\int_{\xi\in\mathbb{R}^n}\Phi_\alpha
         (x-\xi,t-\tau)\Phi_\beta (\xi,\tau)\, d\xi\Biggr)\, d\tau\, dx\\
  \notag &=\int^{t}_{0}\Biggl(\int_{x\in\mathbb{R}^n}e^{ix\cdot y}\Biggl(\int_{\xi\in\mathbb{R}^n}\Phi_\alpha (x-\xi,t-\tau)\Phi_\beta (\xi,\tau)\,d\xi\Biggr)dx\Biggr)d\tau\\
 \notag &=\int^{t}_{0}\frac{(t-\tau)^{\alpha-1}}{\Gamma(\alpha)}e^{-|y|^2
           (t-\tau)}\frac{\tau^{\beta-1}}{\Gamma(\beta)}e^{-|y|^2
           \tau}d\tau\quad\text{(by the convolution theorem)}\\
\notag &=e^{-|y|^2 t}\int^{t}_{0}\frac{(t-\tau)^{\alpha-1}\tau^{\beta-1}}{\Gamma(\alpha)\Gamma(\beta)}d\tau\\
  &=e^{-t|y|^2}\frac{t^{\alpha+\beta-1}}{\Gamma(\alpha+\beta)}=\widehat{\Phi}_{\alpha+\beta}(\cdot,t)(y)\label{5.5}.
 \end{align}
 This calculation is justified by Fubini's theorem and the fact that
 the integral \eqref{5.4} with $e^{ix\cdot y}$ replaced with $1$ is, by Fubini's theorem for nonnegative functions and \eqref{5.3}, equal to
 \begin{align*}
  &\int^{t}_{0}\int_{\xi\in\mathbb{R}^n}\biggl(\int_{x\in\mathbb{R}^n}\Phi_\alpha (x-\xi,t-\tau)dx\Biggr)\Phi_\beta (\xi,\tau)\, d\xi \, d\tau\\
  &=\int^{t}_{0}\int_{\xi\in\mathbb{R}^n}\frac{(t-\tau)^{\alpha-1}}{\Gamma(\alpha)}\Phi_\beta (\xi,\tau)\, d\xi \, d\tau\\
  &=\frac{t^{\alpha+\beta-1}}{\Gamma(\alpha+\beta)}\quad\text{for }t>0 \text{ and }  y\in\mathbb{R}^n .
 \end{align*}
 It follows now from \eqref{5.5} that \eqref{5.1} holds in $\mathbb{R}^n \times(0,\infty)$.
\end{proof}

The following lemma is needed for the proof of Lemma \ref{lem5.3}
which in turn is needed for the proof of Theorem \ref{thm2.3}.
\begin{lem}\label{lem5.2}
 Suppose $f\in L^1 (-\infty,0)$ and $0<\alpha\leq1$.  Then
 $$g(t):=\int^{t}_{-\infty}(t-\tau)^{\alpha-1}|f(\tau)|d\tau<\infty\quad\text{ for almost all }t\in(-\infty,0).$$
\end{lem}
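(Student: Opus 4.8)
The plan is to deduce the pointwise finiteness of $g$ almost everywhere from its \emph{local integrability} on $(-\infty,0)$. Concretely, I would show that
\[
\int_a^0 g(t)\,dt<\infty\qquad\text{for every }a<0,
\]
which forces $g(t)<\infty$ for almost every $t\in(a,0)$; since $a<0$ is arbitrary, letting $a\to-\infty$ yields $g<\infty$ a.e.\ on all of $(-\infty,0)$. This reduces an assertion about a singular integral to a single finite double integral.

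To carry this out I would fix $a<0$, write out $\int_a^0 g(t)\,dt=\int_a^0\int_{-\infty}^t (t-\tau)^{\alpha-1}|f(\tau)|\,d\tau\,dt$, and apply Tonelli's theorem (legitimate since the integrand is nonnegative and measurable) to interchange the order of integration. After the swap the region $\{\tau<t,\ a<t<0\}$ becomes, for fixed $\tau<0$, the range $t\in(\max(a,\tau),0)$, so
\[
\int_a^0 g(t)\,dt=\int_{-\infty}^0 |f(\tau)|\left(\int_{\max(a,\tau)}^0 (t-\tau)^{\alpha-1}\,dt\right)d\tau .
\]
The inner integral evaluates to $\tfrac{(-\tau)^\alpha}{\alpha}$ when $a\le\tau<0$ and to $\tfrac{1}{\alpha}\big[(-\tau)^\alpha-(a-\tau)^\alpha\big]$ when $\tau<a$.

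The main obstacle is obtaining a bound on this inner integral that is \emph{uniform in} $\tau$, so that it can be pulled out of the $\tau$-integral against $|f|\in L^1(-\infty,0)$. Here the hypothesis $0<\alpha\le1$ is essential: it gives the subadditivity $(x+y)^\alpha\le x^\alpha+y^\alpha$ for $x,y\ge0$, and applying it with $x=-a$, $y=a-\tau$ yields $(-\tau)^\alpha-(a-\tau)^\alpha\le(-a)^\alpha$ in the case $\tau<a$, while the case $a\le\tau<0$ trivially satisfies $(-\tau)^\alpha\le(-a)^\alpha$. Thus the inner integral is $\le(-a)^\alpha/\alpha$ for all $\tau<0$, giving $\int_a^0 g(t)\,dt\le \frac{(-a)^\alpha}{\alpha}\,\|f\|_{L^1(-\infty,0)}<\infty$ and completing the argument.

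As an alternative I would note that the same conclusion follows from a splitting plus Young's inequality: writing $g(t)=\int_{-\infty}^{t-1}(t-\tau)^{\alpha-1}|f(\tau)|\,d\tau+\int_{t-1}^{t}(t-\tau)^{\alpha-1}|f(\tau)|\,d\tau$, the first term is bounded by $\|f\|_{L^1(-\infty,0)}$ because $(t-\tau)^{\alpha-1}\le1$ when $t-\tau\ge1$, while the second term is the convolution $|f|*k$ with the kernel $k(s)=s^{\alpha-1}\chi_{(0,1]}(s)\in L^1(\mathbb{R})$ (since $\int_0^1 s^{\alpha-1}\,ds=1/\alpha$), so by Young's inequality $|f|*k\in L^1(\mathbb{R})$ and is therefore finite a.e. Either route isolates the integrable local singularity of the kernel near $\tau=t$ as the only delicate point, and both rely on $0<\alpha\le1$ to control it.
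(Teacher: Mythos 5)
Your proof is correct, but it takes a different route from the paper's. The paper integrates $g$ against the weight $(-t)^{-\alpha}$ over the \emph{entire} half-line: after Tonelli, the inner integral $\int_\tau^0(-t)^{-\alpha}(t-\tau)^{\alpha-1}\,dt$ is exactly the Beta-function identity \eqref{5.3}, giving $\int_{-\infty}^0(-t)^{-\alpha}g(t)\,dt=\Gamma(1-\alpha)\Gamma(\alpha)\|f\|_{L^1(-\infty,0)}<\infty$ in one stroke, whence $g<\infty$ a.e.; the case $\alpha=1$ must be handled separately there since the weight degenerates. You instead drop the weight, establish $g\in L^1(a,0)$ for each $a<0$ by bounding the inner integral uniformly in $\tau$ by $(-a)^\alpha/\alpha$ via subadditivity of $x\mapsto x^\alpha$, and then exhaust $(-\infty,0)$. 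What the paper's weight buys is that a single global computation suffices, with the Beta identity doing all the work and no case-splitting in $\tau$; what your version buys is that it avoids the Beta identity entirely, treats $0<\alpha\le1$ uniformly, and produces an explicit quantitative local $L^1$ bound on $g$. Your alternative via the splitting $g=|f|*\bigl(s^{\alpha-1}\chi_{(0,1]}\bigr)+(\text{tail})$ and Young's inequality is also valid and is in fact closest in spirit to how the paper handles the kernel $\Phi_\alpha$ elsewhere (e.g.\ the decomposition around \eqref{6.2.5}), though it is not what is done for this particular lemma.
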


\begin{proof}
 The lemma is clearly true if $\alpha=1$.  Hence we can assume $0<\alpha<1$.  Since
 \begin{align*}
  \int^{0}_{-\infty}&(-t)^{-\alpha}g(t)\,dt
=\int^{0}_{-\infty}(-t)^{-\alpha}\int^{t}_{-\infty}(t-\tau)^{\alpha-1}|f(\tau)|\, d\tau \, dt\\
  &=\int^{0}_{-\infty}|f(\tau)|\biggl(\int^{0}_{\tau}(-t)^{(1-\alpha)-1}(t-\tau)^{\alpha-1}dt\Biggr)d\tau\\
  &=\Gamma(1-\alpha)\Gamma(\alpha)\int^{0}_{-\infty}|f(\tau)|\,d\tau<\infty,
 \end{align*}
 where we have used \eqref{5.3}, we see that
 $g(t)<\infty$ for almost all $t\in(-\infty,0)$.
\end{proof}

\begin{lem}\label{lem5.3}
 Suppose $f\in L^1 (\mathbb{R}^n \times(-\infty,0)),\,\alpha\in(0,1]$, and $y\in\mathbb{R}^n$.  Then for almost all $t\in(-\infty,0)$ we have
\[
\widehat{J_\alpha f}(\cdot,t)(y)
=\int^{t}_{-\infty}\frac{(t-\tau)^{\alpha-1}}{\Gamma(\alpha)}
e^{-|y|^2 (t-\tau)}   \widehat{f}(\cdot,\tau)(y)\,d\tau.
\]
\end{lem}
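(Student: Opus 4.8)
The plan is to first pin down the correct sense of the identity---namely that $J_\alpha f(\cdot,t)\in L^1(\mathbb{R}^n)$ for almost every $t\in(-\infty,0)$, so that its spatial Fourier transform $\widehat{J_\alpha f}(\cdot,t)(y)$ is well defined---and then to derive the formula by a single Fubini interchange followed by the spatial convolution theorem. The only genuinely nontrivial input is the integrability that makes Fubini legitimate, and this is exactly what Lemma \ref{lem5.2} supplies; everything else is the convolution theorem together with the explicit transform \eqref{5.2.5}.

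First I would record the elementary mass identity
\[
\int_{\mathbb{R}^n}\Phi_\alpha(x,s)\,dx=\frac{s^{\alpha-1}}{\Gamma(\alpha)}\qquad(s>0),
\]
which is immediate from \eqref{2.2} since the Gaussian $(4\pi s)^{-n/2}e^{-|x|^2/(4s)}$ has unit mass. Writing $F(\tau):=\|f(\cdot,\tau)\|_{L^1(\mathbb{R}^n)}$, Fubini's theorem for nonnegative functions gives $F\in L^1(-\infty,0)$, and, integrating \eqref{2.4} (applied to $|f|$) in $x$ first via this mass identity,
\[
\int_{\mathbb{R}^n}|J_\alpha f(x,t)|\,dx\leq\int_{-\infty}^{t}\frac{(t-\tau)^{\alpha-1}}{\Gamma(\alpha)}F(\tau)\,d\tau=:\frac{1}{\Gamma(\alpha)}\,g(t).
\]
Lemma \ref{lem5.2}, applied to $F\in L^1(-\infty,0)$, then yields $g(t)<\infty$ for almost all $t\in(-\infty,0)$; for each such $t$ we have $J_\alpha f(\cdot,t)\in L^1(\mathbb{R}^n)$, so the left-hand side of the asserted identity is meaningful.

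Next, fixing such a $t$ and $y\in\mathbb{R}^n$, I would write
\[
\widehat{J_\alpha f}(\cdot,t)(y)=\int_{\mathbb{R}^n}e^{ix\cdot y}J_\alpha f(x,t)\,dx,
\]
insert \eqref{2.4}, and interchange the $x$- and $\tau$-integrations. This interchange is justified by Fubini's theorem precisely because the absolute integrand has finite total integral, equal to $g(t)/\Gamma(\alpha)<\infty$ by the computation above. After the interchange, for almost every fixed $\tau<t$ the inner $x$-integral is the spatial Fourier transform of the convolution $\Phi_\alpha(\cdot,t-\tau)*_x f(\cdot,\tau)$ of two functions in $L^1(\mathbb{R}^n)$ (here $f(\cdot,\tau)\in L^1(\mathbb{R}^n)$ for a.e.\ $\tau$ by Fubini). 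By the convolution theorem it equals $\widehat{\Phi}_\alpha(\cdot,t-\tau)(y)\,\widehat{f}(\cdot,\tau)(y)$, and substituting the value of $\widehat{\Phi}_\alpha(\cdot,t-\tau)(y)$ from \eqref{5.2.5} produces exactly the claimed expression.

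The main obstacle is the justification of the Fubini interchange in the last step: everything rests on controlling $\int_{\mathbb{R}^n}|J_\alpha f(x,t)|\,dx$, and the purpose of isolating Lemma \ref{lem5.2} beforehand is precisely to guarantee that this quantity is finite for almost every $t\in(-\infty,0)$. Once that finiteness is secured, the spatial convolution theorem and \eqref{5.2.5} reduce the remainder to routine bookkeeping.
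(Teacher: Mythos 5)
Your proposal is correct and follows essentially the same route as the paper: both proofs reduce the needed integrability to Lemma \ref{lem5.2} applied to $\tau\mapsto\|f(\cdot,\tau)\|_{L^1(\mathbb{R}^n)}$ (using that $\Phi_\alpha(\cdot,s)$ has mass $s^{\alpha-1}/\Gamma(\alpha)$), then interchange the $x$- and $\tau$-integrations by Fubini and finish with the spatial convolution theorem and \eqref{5.2.5}. The only cosmetic difference is that you make explicit the intermediate observation that $J_\alpha f(\cdot,t)\in L^1(\mathbb{R}^n)$ for a.e.\ $t<0$, which the paper leaves implicit in its absolute-value computation.
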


\begin{proof}
 By Fubini's theorem for nonnegative functions and Lemma \ref{lem5.2} we find for almost all $t\in(-\infty,0)$ that
 \begin{align*}
  \int_{x\in\mathbb{R}^n}&|e^{ix\cdot
                           y}|\int^{t}_{-\infty}\frac{(t-\tau)^{\alpha-1}}{\Gamma(\alpha)}\int_{\xi\in\mathbb{R}^n}\Phi_1(x-\xi,t-\tau)|f(\xi,\tau)| \,d\xi\, d\tau \,dx\\
  &=\int^{t}_{-\infty}\frac{(t-\tau)^{\alpha-1}}{\Gamma(\alpha)}\Biggl(\int_{\xi\in\mathbb{R}^n}|f(\xi,\tau)|d\xi\Biggr)d\tau<\infty.
 \end{align*}
 Hence by Fubini's theorem, the convolution theorem for Fourier
 transforms, and \eqref{5.2.5}, we see for almost all $t\in(-\infty,0)$ that
 \begin{align*}
\widehat{J_\alpha f}(\cdot,t)(y) 
  &=\int^{t}_{-\infty}\int_{x\in\mathbb{R}^n}e^{ix\cdot y}\int_{\xi\in\mathbb{R}^n}\Phi_\alpha(x-\xi,t-\tau)f(\xi,\tau)\,d\xi\, dx\,d\tau\\
  &=\int^{t}_{-\infty}\frac{(t-\tau)^{\alpha-1}}{\Gamma(\alpha)}e^{-|y|^2
    (t-\tau)}
 \widehat{f}(\cdot,\tau)(y)\,d\tau.
 \end{align*}
\end{proof}

\section{Fully fractional heat operator proofs}\label{sec6}
In this section we prove our fully fractional heat operator results which we stated in Section \ref{sec2}.

\begin{proof}[Proof of Theorem \ref{thm2.1}] Part (i) was proved by 
Sampson \cite[Theorem 2.2]{SP}.  We prove part (ii) in two steps.
\medskip

\noindent\underline{Step 1.}  Suppose $f,g\in S$.  Let $(x,t)\in\mathbb{R}^n \times\mathbb{R}$ be momentarily fixed and define $\varphi\in S$ by
$$\varphi(y,s)=f(x+y,t+s).$$
Then
$$\widehat{\widehat{\varphi}}(y,s)=(2\pi)^{n+1}\varphi(-y,-s)=(2\pi)^{n+1}f(x-y,t-s)$$
and
$$\widehat{\varphi}(y,s)=e^{-ix\cdot y-its}\widehat{f}(y,s).$$
Thus by part (i) with $\varphi$ replaced with $\widehat{\varphi}$ we get
\begin{align}\label{6.1}
 \notag (2\pi)^{n+1}J_\alpha f(x,t)&=(2\pi)^{n+1}\iint_{\mathbb{R}^n \times\mathbb{R}}\Phi_\alpha (y,s)f(x-y,t-s)\, dy \, ds\\
 \notag &=\iint_{\mathbb{R}^n \times\mathbb{R}}\Phi_\alpha (y,s)\widehat{\widehat{\varphi}}(y,s)\, dy \, ds\\
 \notag &=\iint_{\mathbb{R}^n \times\mathbb{R}}(|y|^2 -is)^{-\alpha}\widehat{\varphi}(y,s)\, dy \, ds\\
 &=\iint_{\mathbb{R}^n \times\mathbb{R}}(|y|^2 -is)^{-\alpha}\widehat{f}(y,s)e^{-ix\cdot y-its}\, dy \, ds.
\end{align}
Multiplying \eqref{6.1} by $\widehat{g}(x,t)/(2\pi)^{n+1}$,
integrating the resulting equation with respect to $(x,t)$, and
interchanging the order of integration in the resulting integral on
the RHS, which is allowed by Fubini's theorem and the fact that 
\begin{equation}\label{6.1.5}
\iint_{||y|^2-is|\le 1}||y|^2-is|^{-\alpha}dy\,ds<\infty\quad\text{for }
0<\alpha<(n+2)/2,
\end{equation}
we get \eqref{2.5}.
\medskip

\noindent\underline{Step 2.} Suppose $f\in L^1 (\mathbb{R}^n
\times\mathbb{R})$ and $g\in S$.  Then $\widehat{g}\in S$ and 
$\widehat{f}\in C(\mathbb{R}^n \times\mathbb{R})
\cap L^\infty (\mathbb{R}^n\times\mathbb{R} )$.  Since $S$ is dense in 
$L^1 (\mathbb{R}^n \times\mathbb{R})$ there exists $\{f_j \}\subset S$ such that $f_j \to f$ in $L^1 (\mathbb{R}^n \times\mathbb{R})$ and by Step 1
\begin{equation}\label{6.2}
 \iint_{\mathbb{R}^n \times\mathbb{R}}J_\alpha f_j (x,t)\widehat{g}(x,t)\, dx \, dt=\iint_{\mathbb{R}^n \times\mathbb{R}}(|y|^2 -is)^{-\alpha}\widehat{f}_j (y,s)g(y,s)\, dy \, ds.
\end{equation}
Since
$$\|\widehat{f}_j -\widehat{f}\|_{L^\infty (\mathbb{R}^n
  \times\mathbb{R})}\leq\| f_j -f\|_{L^1 (\mathbb{R}^n
  \times\mathbb{R})}\to 0\quad\text{as }j\to\infty$$
we have
\begin{align*}
 \Biggl|&\iint_{\mathbb{R}^n \times\mathbb{R}}(\widehat{f}_j (y,s)-\widehat{f}(y,s))(|y|^2 -is)^{-\alpha}g(y,s)\, dy \, ds\Biggr|\\
 &\leq\|\widehat{f}_j -\widehat{f}\|_{L^\infty (\mathbb{R}^n \times\mathbb{R})}\iint_{\mathbb{R}^n \times\mathbb{R}}||y|^2 -is|^{-\alpha}|g(y,s)|\, dy \, ds\\
&\to0\quad\text{as }j\to\infty
\end{align*}
by \eqref{6.1.5}.  Thus the RHS of \eqref{6.2} tends to the RHS of \eqref{2.5} as $j\to\infty$.

Also, defining $h(x,t)=|\widehat{g}(-x,-t)|$ we have 
\begin{align*}
 \Big|\iint_{\mathbb{R}^n \times\mathbb{R}}J_\alpha (f_j
  &-f)(x,t)\widehat{g}(x,t)
\, dx \, dt\Big|\\
 &\leq\iint_{\mathbb{R}^n \times\mathbb{R}}\iint_{\mathbb{R}^n
   \times\mathbb{R}}
\Phi_\alpha (x-y,t-s)|(f_j -f)(y,s)|\, dy \, ds\,|\widehat{g}(x,t)|\, dx \, dt\\
 &=\iint_{\mathbb{R}^n \times\mathbb{R}}|(f_j -f)(y,s)|
(\Phi_\alpha*h)(-y,-s)
\,dy\,ds\\
&\to 0\quad\text{as }j\to\infty
\end{align*}
because noting that
$h\in L^1(\mathbb{R}^n\times\mathbb{R})\cap L^\infty(\mathbb{R}^n\times\mathbb{R})$,
\begin{align}
 \notag \|\Phi_\alpha \raisebox{2pt}{$\chi$}_{\mathbb{R}^n \times(0,1)}\|_{L^1 (\mathbb{R}^n \times\mathbb{R})}&=\int^1_0\frac{t^{\alpha-1}}{\Gamma(\alpha)}\int_{x\in\mathbb{R}^n}\Phi_1(x,t)\, dx \, dt\\
 \label{6.2.5} &=\int^1_0\frac{t^{\alpha-1}}{\Gamma(\alpha)}dt<\infty
\quad\text{for }\alpha>0,
\end{align}
and
$\Phi_\alpha\raisebox{2pt}{$\chi$}_{\mathbb{R}^n\times(1,\infty)}\in
L^\infty(\mathbb{R}^n\times\mathbb{R})$ for $\alpha<(n+2)/2$
we find that
\[
\Phi_\alpha *h 
=\Phi_\alpha\raisebox{2pt}{$\chi$}_{\mathbb{R}^n\times(0,1)}*h
+\Phi_\alpha\raisebox{2pt}{$\chi$}_{\mathbb{R}^n\times(1,\infty)}*h\in L^\infty(\mathbb{R}^n\times\mathbb{R})
\]
by Young's inequality.
Thus the LHS of \eqref{6.2} tends to the LHS of
\eqref{2.5} as $j\to\infty$. 
\end{proof}

\begin{proof}[Proof of Theorem \ref{thm2.2}] 
 Since
\[
\bigcap_{T\in\mathbb{R}}L^{p}_{\text{loc}}(\mathbb{R}^n \times \mathbb{R}_T )
=L^{p}_{\text{loc}}(\mathbb{R}^n \times\mathbb{R})
\]
and since 
$(J_\alpha f_T )|_{\mathbb{R}^n \times\mathbb{R}_T}
=(J_\alpha f)|_{\mathbb{R}^n \times\mathbb{R}_T}$, 
where 
$f_T =f\raisebox{2pt}{$\chi$}_{\mathbb{R}^n \times \mathbb{R}_T}$ to prove (i), (ii) and (iii) it 
suffices to prove for all $T\in\mathbb{R}$ that
\begin{enumerate}
 \item[(i)$^\prime$] $J_\alpha f_T ,\,J_\alpha |f_T |\in L^{p}_{\text{loc}}(\mathbb{R}^n \times \mathbb{R}_T )$
 \item[(ii)$^\prime$] $J_\beta J_\gamma f_T =J_\alpha f_T \quad\text{in } L^{p}_{\text{loc}}(\mathbb{R}^n \times\mathbb{R}_T )$ whenever $\beta>0,\,\gamma>0$, and $\beta+\gamma=\alpha$\\
 and
 \item[(iii)$^\prime$] $HJ_\alpha f_T =J_{\alpha-1}f_T \quad\text{in } D^\prime (\mathbb{R}^n \times\mathbb{R}_T)$ when $\alpha>1$.
\end{enumerate}
To do this, let $T\in\mathbb{R}$ be fixed.  Since $f\in X^p \subset L^p (\mathbb{R}^n \times \mathbb{R}_T )$ we have
\begin{equation}\label{6.3}
 f_T \in L^p (\mathbb{R}^n \times\mathbb{R}).
\end{equation}
\medskip

\noindent\underline{Proof of (i)$^\prime$.} Since $|J_\alpha f_T |\leq J_\alpha |f_T |$, to prove (i)$^\prime$ it suffices to prove only that
\begin{equation}\label{6.4}
 J_\alpha |f_T |\in L^{p}_{\text{loc}}(\mathbb{R}^n \times\mathbb{R}_T ).
\end{equation}
By \eqref{2.3} we have
\begin{equation}\label{6.5}
 J_\alpha |f_T |=u_1 +u_2 ,
\end{equation}
where
$$u_1 =(\Phi_\alpha \raisebox{2pt}{$\chi$}_{\mathbb{R}^n \times(0,1)})*|f_T| \quad\text{and}\quad  u_2 =(\Phi_\alpha \raisebox{2pt}{$\chi$}_{\mathbb{R}^n \times(1,\infty)})*|f_T|.$$
It follows from \eqref{6.2.5},  \eqref{6.3}, and Young's inequality that
\[
u_1 \in L^p (\mathbb{R}^n \times\mathbb{R}).
\]
Thus to complete the proof of \eqref{6.4} and hence of (i)$^\prime$ it suffices to show 
\begin{equation}\label{6.6}
 u_2 \in L^\infty (\mathbb{R}^n \times\mathbb{R}).
\end{equation}
To do this we consider two cases.
\medskip

\noindent\underline{Case I.} Suppose $1<p<\frac{n+2}{2\alpha}$.  Let $q$ be the conjugate H\"older exponent for $p$.  Then
\[\frac{1}{q}=1-\frac{1}{p}<1-\frac{2\alpha}{n+2}=\frac{n+2-2\alpha}{n+2}\]
and thus making the change of variables $\sqrt{\frac{q}{4s}}y=z$   we obtain
\begin{align*}
 \|\Phi_\alpha \raisebox{2pt}{$\chi$}_{\mathbb{R}^n \times(1,\infty)}\|&^{q}_{L^q
                 (\mathbb{R}^n \times\mathbb{R})}
=C(n,\alpha,q)\int^{\infty}_{1}\int_{y\in\mathbb{R}^n}s^{(\alpha-1-n/2)q}e^{-\frac{q}{4s}|y|^2}\, dy \, ds\\
 &=C(n,\alpha,q)\int^{\infty}_{1}s^{(\alpha-1-n/2)q+n/2}\int_{z\in\mathbb{R}^n}e^{-|z|^2}dz\,ds
 <\infty.
\end{align*}
Hence \eqref{6.6} follows from \eqref{6.3} and Young's inequality.\\
\medskip

\noindent\underline{Case II.} Suppose $1=p\leq\frac{n+2}{2\alpha}$.  Then
\begin{align*}
 \Phi_\alpha \raisebox{2pt}{$\chi$}_{\mathbb{R}^n \times(1,\infty)}(y,s)&\leq C(n,\alpha)s^{\alpha-1-n/2}\raisebox{2pt}{$\chi$}_{\mathbb{R}^n \times(1,\infty)}(y,s)\\
 &\leq C(n,\alpha)\quad\text{for }(y,s)\in\mathbb{R}^n \times\mathbb{R}.
\end{align*}
Thus \eqref{6.6} follows from \eqref{6.3} and so the proof of (i)$^\prime$ is complete.\\
\medskip

\noindent\underline{Proof of (ii)$^\prime$.} Using Fubini's theorem for nonnegative functions and Lemma \ref{lem5.1} we have
\begin{align*}
 J_\beta (J_\gamma |f_T |)(x,t)&=\iint_{\mathbb{R}^n \times\mathbb{R}}\Phi_\beta (x-\xi,t-\tau)\iint_{\mathbb{R}^n \times\mathbb{R}}\Phi_\gamma (\xi-\eta,\tau-\zeta)|f_T (\eta,\zeta)|\, d\eta \, d\zeta \, d\xi \, d\tau\\
 &=\iint_{\mathbb{R}^n \times\mathbb{R}}\Phi_{\beta+\gamma}(x-\eta,t-\zeta)|f_T (\eta,\zeta)|\, d\eta \, d\zeta\\
 &=(J_\alpha |f_T |)(x,t)<\infty\quad\text{a.e. in }\mathbb{R}^n \times\mathbb{R}
\end{align*}
by part (i)$^\prime$.  Hence by Fubini's theorem the above calculation can be repeated with $|f_T|$ replaced with $f_T$ which gives (ii)$^\prime$.
\medskip

\noindent\underline{Proof of (iii)$^\prime$.} By (i)$^\prime$ we have
\begin{equation}\label{6.7}
 J_\alpha |f_T| ,\,J_{\alpha-1}|f_T |\in L^{p}_{\text{loc}}(\mathbb{R}^n \times\mathbb{R}_T )\subset D^\prime (\mathbb{R}^n \times\mathbb{R}_T ).
\end{equation}
Let $\varphi\in C^{\infty}_{0}(\mathbb{R}^n \times\mathbb{R}_T )$.  Then noting that
\begin{equation}\label{6.8}
 \iint_{\mathbb{R}^n \times\mathbb{R}_T}\Phi_1 (x-\eta,t-\zeta)H^* \varphi(x,t)\, dx \, dt=\varphi(\eta,\zeta)\quad\text{for }(\eta,\zeta)\in\mathbb{R}^n \times\mathbb{R}_T
\end{equation}
where $H^* =-\partial_t-\Delta$ and assuming we can interchange the order of integration in the following calculation (we will justify this after the calculation) it follows from Lemma \ref{lem5.1} that
\begin{align}
\label{6.9}&(H(J_\alpha f_T ))(\varphi)=(J_\alpha f_T )(H^* \varphi)\\
\notag &=\iint_{\mathbb{R}^n \times\mathbb{R}_T}\Biggl(\iint_{\mathbb{R}^n \times\mathbb{R}_T}\Phi_\alpha (x-\xi,t-\tau)f_T (\xi,\tau)\, d\xi \, d\tau\Biggr)H^* \varphi(x,t)\, dx \, dt\\
\notag&=\iint_{\mathbb{R}^n \times\mathbb{R}_T}\iint_{\mathbb{R}^n \times\mathbb{R}_T}\Biggl(\iint_{\mathbb{R}^n \times\mathbb{R}_T}\Phi_1 (x-\eta,t-\zeta)\Phi_{\alpha-1}(\eta-\xi,\zeta-\tau)\, d\eta \, d\zeta\Biggr)\\
\notag&\phantom{=\iint_{\mathbb{R}^n \times\mathbb{R}_T}}\times f_T (\xi,\tau)\, d\xi \, d\tau H^* \varphi(x,t)\, dx \, dt\\
\notag&=\iint_{\mathbb{R}^n \times\mathbb{R}_T}\Biggl(\iint_{\mathbb{R}^n \times\mathbb{R}_T}\Biggl(\iint_{\mathbb{R}^n \times\mathbb{R}_T}\Phi_1 (x-\eta,t-\zeta)H^* \varphi(x,t)\, dx \, dt\Biggr)\Phi_{\alpha-1}(\eta-\xi,\zeta-\tau)\, d\eta \, d\zeta\Biggr)\\
\label{6.10}&\phantom{=\iint_{\mathbb{R}^n \times\mathbb{R}_T}}\times f_T (\xi,\tau)\, d\xi \, d\tau \\
\notag&=\iint_{\mathbb{R}^n \times\mathbb{R}_T}\Biggl(\iint_{\mathbb{R}^n \times\mathbb{R}_T}\Phi_{\alpha-1}(\eta-\xi,\zeta-\tau)f_T (\xi,\tau)\, d\xi \, d\tau\Biggr)\varphi(\eta,\zeta)\, d\eta \, d\zeta\\
\notag&=(J_{\alpha-1}f_T )(\varphi).
\end{align}

To justify this calculation, it suffices by Fubini's theorem to show the integral \eqref{6.10}, with $f_T$ and $H^* \varphi$ replaced with $|f_T |$ and $|H^* \varphi|$, is finite.  However in the same way that \eqref{6.10} was obtained from \eqref{6.9}, we 
see that this modified integral equals
$$\iint_{\mathbb{R}^n \times\mathbb{R}_T}(J_\alpha |f_T|)(x,t)|H^* \varphi|(x,t)\, dx \, dt<\infty$$
by \eqref{6.7}.
\end{proof}

\begin{proof}[Proof of Theorem \ref{thm2.3}]
  Clearly (ii) implies (i).  We now prove (ii).  Suppose \eqref{2.13}.  It follows from \eqref{2.4} that
$$f|_{\mathbb{R}^n \times\mathbb{R}_T}=0\text{ implies }(J_\alpha f)|_{\mathbb{R}^n \times\mathbb{R}_T}=0.$$
Conversely suppose
\begin{equation}\label{6.11}
 (J_\alpha f)|_{\mathbb{R}^n \times\mathbb{R}_T}=0.
\end{equation}
The complete the proof of (ii) it suffices to prove
\begin{equation}\label{6.12}
 f|_{\mathbb{R}^n \times\mathbb{R}_T}=0.
\end{equation}
By Theorem \ref{thm2.2}(iii) and mathematical induction, we can, without loss of generality, assume for the proof \eqref{6.12} that
\begin{equation}\label{6.13}
 0<\alpha\leq1.
\end{equation}
Moreover, by translating we can assume
\begin{equation}\label{6.14}
 T=0.
\end{equation}
We divide the proof of \eqref{6.12} into two cases.\\
\medskip

\noindent\underline{Case I.} Suppose \eqref{2.12}$_2$ holds. Then
\begin{equation}\label{6.15}
 1=p\leq\frac{n+1}{2\alpha}.
\end{equation}
Let
\begin{equation}\label{6.16}
 F(y,t)=\widehat{f}(\cdot,t)(y)\quad
\text{for }(y,t)\in\mathbb{R}^n \times(-\infty,0).
\end{equation}
By \eqref{2.13} and \eqref{6.15} we have
\begin{equation}\label{6.17}
 f\in L^1 (\mathbb{R}^n \times(-\infty,0))
\end{equation}
and thus
\[f(\cdot,t)\in L^1 (\mathbb{R}^n )\quad\text{for almost all }t\in(-\infty,0)\]
which implies
\[F(\cdot,t)\in C(\mathbb{R}^n )\quad\text{for almost all }t\in(-\infty,0).\]
Also, by \eqref{6.17}
\begin{align}\label{6.18}
 \notag\| F(y,\cdot)\|_{L^1 (-\infty,0)}&=\int^{0}_{-\infty}\left|\int_{\mathbb{R}^n}e^{ix\cdot y}f(x,t)\,dx\right|\,dt\\
 &\leq\| f\|_{L^1 (\mathbb{R}^n \times(-\infty,0)}<\infty\quad\text{ for all }y\in\mathbb{R}^n .
\end{align}
\medskip

\noindent\underline{Case I(a).} Suppose $\alpha=1$.  Then by
\eqref{6.17}, \eqref{6.11}, and Lemma \ref{lem5.3}  we have
for each $y\in\mathbb{R}^n$ that
$$\int^{t}_{-\infty}e^{|y|^2 \tau}F(y,\tau)\,d\tau=e^{|y|^2 t}\int^{t}_{-\infty}e^{-|y|^2 (t-\tau)}F(y,\tau)\,d\tau=0$$
for almost all $t\in(-\infty,0)$.  Hence, by \eqref{6.18} and the measure theoretic fundamental theorem of calculus, we get $F=0$ in $L^1 (\mathbb{R}^n \times(-\infty,0))$ which together with \eqref{6.16} implies \eqref{6.12}.
\medskip

\noindent\underline{Case I(b).} Suppose $0<\alpha<1$.  To handle this case we hold $y\in\mathbb{R}^n \backslash\{0\}$ fixed and define
\begin{equation}\label{6.19}
 F_0 (t):=F(y,t).
\end{equation}
Then by \eqref{6.18}
\begin{equation}\label{6.20}
 F_0 \in L^1 (-\infty,0).
\end{equation}
From \eqref{6.17}, \eqref{6.11}, and Lemma \ref{lem5.3} we have
\begin{equation}\label{6.21}
 g(t):=\int^{t}_{-\infty}(t-\tau)^{\alpha-1}e^{|y|^2 \tau}F_0 (\tau)d\tau=0
\end{equation}
for almost all $t\in(-\infty,0)$.  On the other hand, assuming we can interchange the order of integration in the following calculation (we will justify this after the calculation), we find for $b\in\mathbb{R}$ that
\begin{align}\label{6.22}
 \notag &\int^{0}_{-\infty}\Biggl(\int^{0}_{t}(\zeta-t)^{-\alpha}\cos b\zeta \,d\zeta\Biggr)g(t)\,dt\\
\notag &=\int^{0}_{-\infty}e^{|y|^2 \tau}F_0 (\tau)\Biggl(\int^{0}_{\tau}\cos b\zeta\Biggl(\int^{\zeta}_{\tau}(t-\tau)^{\alpha-1}(\zeta-t)^{-\alpha}dt\Biggr)d\zeta\Biggr)d\tau\\
 &=C(\alpha)\int^{0}_{-\infty}e^{|y|^2 \tau}F_0 (\tau)
\Biggl(\int^{0}_{\tau}\cos b\zeta \,d\zeta\Biggr)d\tau
\end{align}
because making the change of variables $t=\zeta-(\zeta-\tau)s$ we see that
$$\int^{\zeta}_{\tau}(t-\tau)^{\alpha-1}(\zeta-t)^{-\alpha}dt=\int^{1}_{0}(1-s)^{\alpha-1}s^{-\alpha}ds=C(\alpha).$$
The calculation \eqref{6.22} is justified by Fubini's theorem and the fact that if we replace $\cos b\zeta$ and $g(t)$ with $|\cos b\zeta|$ and
$$g_0 (t)=\int^{t}_{-\infty}(t-\tau)^{\alpha-1}e^{|y|^2 \tau}|F_0 (\tau)|\,d\tau$$
respectively in the above calculation we get by Fubini's theorem for nonnegative functions that
\begin{align*}
 &\int^{0}_{-\infty}\int^{0}_{t}(\zeta-t)^{-\alpha}|\cos b\zeta|\,d\zeta\, g_0(t)\,dt\\
 &\leq C(\alpha)\int^{0}_{-\infty}e^{|y|^2 \tau}|F_0 (\tau)|
\Biggl(\int^{0}_{\tau}|\cos b\zeta|\,d\zeta\Biggr)d\tau\\
 &\leq C(\alpha)\int^{0}_{-\infty}(-\tau)e^{|y|^2 \tau}|F_0 (\tau)|\,d\tau<\infty
\end{align*}
by \eqref{6.20}

It follows now from \eqref{6.21}, \eqref{6.22} and \eqref{6.19} that 
$$0=\int^{0}_{-\infty}e^{|y|^2 \tau}F(y,\tau)\sin b\tau \,d\tau$$
for all $y\in\mathbb{R}^n \backslash\{0\}$ and all $b\in\mathbb{R}$.  Thus since the Fourier sine transform is one to one on $L^1 (-\infty,0)$ we have $F(y,\cdot)=0$ in $L^1 (-\infty,0)$ for all $y\in\mathbb{R}^n \backslash\{0\}$.  Hence by 
Fubini's theorem, $F=0$ in $L^1 (\mathbb{R}^n \times(-\infty,0))$,
which together with \eqref{6.16} and \eqref{6.14} implies \eqref{6.12}.\\
\medskip

\noindent\underline{Case II.}  Suppose \eqref{2.12}$_1$ holds.  Let
$f_T =f\raisebox{2pt}{$\chi$}_{\mathbb{R}^n \times\mathbb{R}_T}$ and $u=J_\alpha f_T$.
Then by \eqref{2.13} we have
$$f_T \in L^p (\mathbb{R}^n \times\mathbb{R}),$$
and by \eqref{2.4} and \eqref{6.11} we have
\begin{equation}\label{6.23}
 u=0\quad\text{in } \mathbb{R}^n \times\mathbb{R}_T .
\end{equation}
Let $J^{-\alpha}_{\varepsilon}u$ be as defined in Theorem \ref{thmA.1}.  By \eqref{6.23} we have for $l>\alpha$ that $(\Delta^{l}_{y,\tau}u)(x,t)=0$ for $(x,t)\in\mathbb{R}^n \times\mathbb{R}_T$ and $(y,\tau)\in\mathbb{R}^n \times(0,\infty)$.  Thus for 
$\varepsilon>0$ we have
$$J^{-\alpha}_{\varepsilon}u=0\quad\text{in } \mathbb{R}^n \times\mathbb{R}_T .$$
Hence \eqref{6.12} follows from Theorem \ref{thmA.1}. 
\end{proof}

\begin{proof}[Proof of Theorem \ref{thm2.4}]
 For $a,\tau>0$ and $\delta\geq0$ we have
 \begin{align}\label{L1}
  \notag \int_{|\xi|>\delta}\Phi_{\alpha,a,1}(\xi,\tau)\,d\xi
  \notag &=\int_{|\xi|>\delta}\frac{\tau^{\alpha-1}}{\Gamma(\alpha)}\,\frac{1}{(4\pi a^2 \tau)^{n/2}}e^{-\frac{|\xi|^2}{4a^2 \tau}}\,d\xi\\
  &=\frac{\tau^{\alpha-1}}{\Gamma(\alpha)}\,\frac{1}{\pi^{n/2}}\int_{|\eta|>\frac{\delta}{\sqrt{4a^2 \tau}}}e^{-|\eta|^2}d\eta.
 \end{align}
 In particular, taking $\delta=0$ we find that
 \begin{equation}\label{L2}
  \int_{\mathbb{R}^n}\Phi_{\alpha,a,1}(\xi,\tau)\,d\xi=\frac{\tau^{\alpha-1}}{\Gamma(\alpha)}\quad\text{ for }a,\tau>0.
 \end{equation}
 Let $\Omega$ be a compact subset of $\mathbb{R}^n \times\mathbb{R}$.  Choose $T>0$ such that
 \begin{equation}\label{L3}
  f=0\quad\text{ on }\mathbb{R}^n \times\mathbb{R}_{-T}
 \end{equation}
 and
 \begin{equation}\label{L4}
  \Omega\subset\mathbb{R}^n \times\mathbb{R}_T .
 \end{equation}
 Let $\varepsilon>0$.  Since $f$ is uniformly continuous on $\mathbb{R}^n \times\mathbb{R}$ there exists $\delta>0$ such that
 \begin{equation}\label{L5}
  |f(x-\xi,\zeta)-f(x,\zeta)|<\varepsilon
 \end{equation}
 whenever $x,\xi\in\mathbb{R}^n ,\zeta\in\mathbb{R}$, and $|\xi|<\delta$.
 
 Let $(x,t)\in\Omega$.  Then $t<T$ and thus for $\tau\geq2T$ we have
 $$t-\tau<T-2T=-T.$$
 Hence for $a>0$ we have by \eqref{L3} and \eqref{L2} that
 \begin{align}\label{L6}
  \notag &|(J_{\alpha,a,1}f-J_{\alpha,0,1}f)(x,t)|\\
  \notag &\leq\int^{2T}_{0}\left|\int_{\mathbb{R}^n}\Phi_{\alpha,a,1}(\xi,\tau)f(x-\xi,t-\tau)\,d\xi-\frac{\tau^{\alpha-1}}{\Gamma(\alpha)}f(x,t-\tau)\right|d\tau\\
  \notag &=\int^{2T}_{0}\left|\int_{\mathbb{R}^n}\Phi_{\alpha,a,1}(\xi,\tau)(f(x-\xi,t-\tau)-f(x,t-\tau))\,d\xi\right|d\tau\\
  &\leq K_1 (x,t)+K_2 (x,t)
 \end{align}
 where
 $$K_1 (x,t)=\int^{2T}_{0}\int_{|\xi|<\delta}\Phi_{\alpha,a,1}(\xi,\tau)|f(x-\xi,t-\tau)-f(x,t-\tau)|\,d\xi\,d\tau$$
 and
 $$K_2 (x,t)=\int^{2T}_{0}\int_{|\xi|>\delta}\Phi_{\alpha,a,1}(\xi,\tau)|f(x-\xi,t-\tau)-f(x,t-\tau)|\,d\xi\,d\tau.$$
 From \eqref{L5} and \eqref{L2} we conclude that
 $$K_1 (x,t)\leq\varepsilon\int^{2T}_{0}\left(\int_{\mathbb{R}^n}\Phi_{\alpha,a,1}(\xi,\tau)\,d\xi\right)d\tau=\varepsilon\int^{2T}_{0}\frac{\tau^{\alpha-1}}{\Gamma(\alpha)}d\tau$$
 and letting $M=2\| f\|_{L^\infty (\mathbb{R}^n \times\mathbb{R})}$ and using \eqref{L1} we obtain
 $$K_2 (x,t)\leq M\int^{2T}_{0}\left(\int_{|\xi|>\delta}
\Phi_{\alpha,a,1}(\xi,\tau)\,d\xi\right)d\tau\leq M\left(\int^{2T}_{0}\frac{\tau^{\alpha-1}}{\Gamma(\alpha)}d\tau\right)C(n,a,\delta,T)$$
 where
 $$C(n,a,\delta,T)=\frac{1}{\pi^{n/2}}\int_{|\eta|>\frac{\delta}{\sqrt{8a^2 T}}}e^{-|\eta|^2}d\eta\to0\quad\text{ as }a\to0^+ .$$
 The theorem therefore follows from \eqref{L6}.
\end{proof}

\begin{proof}[Proof of Theorem \ref{thm2.5}]
 For $b>0,\delta>0$, and $\xi\in\mathbb{R}^n \backslash\{0\}$ we have
 \begin{align}
  \notag \int^{\infty}_{\delta}\Phi_{\alpha,1,b}(\xi,\tau)\,d\tau
   &=\int^{\infty}_{\delta}
\frac{(\tau/b)^{\alpha-1}}{\Gamma(\alpha)}\,\frac{1}{(4\pi\tau/b)^{n/2}}e^{-\frac{b|\xi|^2}{4\tau}}d\tau/b\\
  \notag &=\int^{\infty}_{\delta}\frac{1}{\Gamma(\alpha)(4\pi)^{n/2}}\left(\frac{\tau}{b}\right)^{\alpha-1-n/2}e^{-\frac{b|\xi|^2}{4\tau}}\frac{1}{b}\,d\tau\\
  \notag &=\int^{\frac{b|\xi|^2}{4\delta}}_{0}\frac{1}{\Gamma(\alpha)(4\pi)^{n/2}}\left(\frac{|\xi|^2}{4\zeta}\right)^{\alpha-1-n/2}e^{-\zeta}\frac{|\xi|^2}{4\zeta^2}\,d\zeta\\
  \notag &=\frac{(|\xi|^2 /4)^{\alpha-n/2}}{\Gamma(\alpha)(4\pi)^{n/2}}\int^{\frac{b|\xi|^2}{4\delta}}_{0}\zeta^{n/2-\alpha-1}e^{-\zeta}d\zeta\\
  \label{R1} &=\frac{|\xi|^{2\alpha-n}}{4^\alpha \pi^{n/2}\Gamma(\alpha)}\int^{\frac{b|\xi|^2}{4\delta}}_0\zeta^{n/2-\alpha-1}e^{-\zeta}d\zeta\\
  \label{R2} &\leq\left(\frac{|\xi|^{2\alpha-n}}{4^\alpha \pi^{n/2}\Gamma(\alpha)}\right)\frac{1}{n/2-\alpha}\left(\frac{b|\xi|^2}{4\delta}\right)^{n/2-\alpha}
  =C(n,\alpha)\left(\frac{b}{\delta}\right)^{n/2-\alpha}.
 \end{align}
 Moreover, letting $\delta\to0^+$ in \eqref{R1} we obtain
 \begin{equation}\label{R3}
  \int^{\infty}_{0}\Phi_{\alpha,1,b}(\xi,\tau)\,d\tau=\frac{|\xi|^{2\alpha-n}}{\gamma(n,\alpha)}\quad\text{ for }b>0\text{ and }\xi\neq0,
 \end{equation}
 where $\gamma$ is given in \eqref{R0}.
 
 Let $\Omega$ be a compact subset of $\mathbb{R}^n \times\mathbb{R}$.  Choose $R>0$ such that
 \begin{equation}\label{R4}
  f=0\quad\text{ on }(\mathbb{R}^n \backslash B_R (0))\times\mathbb{R}
 \end{equation}
 and
 \begin{equation}\label{R5}
  \Omega\subset B_R (0)\times\mathbb{R}.
 \end{equation}
 Let $\varepsilon>0$.  Since $f$ is uniformly continuous on $\mathbb{R}^n \times\mathbb{R}$ there exists $\delta>0$ such that
 \begin{equation}\label{R6}
  |f(\eta,t-\tau)-f(\eta,t)|<\varepsilon
 \end{equation}
 whenever $\eta\in\mathbb{R}^n$, $t,\tau\in\mathbb{R}$, and $|\tau|<\delta$.
 
 Let $(x,t)\in\Omega$.  Then $|x|<R$ and thus for $|\xi|\geq2R$ we have
 $$|x-\xi|\geq|\xi|-|x|>2R-R=R.$$
 Hence for $b>0$ we find by \eqref{R4} and \eqref{R3} that
 \begin{align}\label{R7}
  \notag &|(J_{\alpha,1,b}f-J_{\alpha,1,0}f)(x,t)|\\
  \notag &\le \int_{|\xi|<2R}\left|\int^{\infty}_{0}\Phi_{\alpha,1,b}(\xi,\tau)f(x-\xi,t-\tau)\,d\tau-\frac{f(x-\xi,t)}{\gamma(n,\alpha)|\xi|^{n-2\alpha}}\right|d\xi\\
  \notag &=\int_{|\xi|<2R}\left|\int^{\infty}_{0}\Phi_{\alpha,1,b}(\xi,\tau)(f(x-\xi,t-\tau)-f(x-\xi,t))\,d\tau)\right|d\xi\\
  &\leq K_1 (x,t)+K_2 (x,t)
 \end{align}
 where
 $$K_1 (x,t)=\int_{|\xi|<2R}\int^{\delta}_{0}\Phi_{\alpha,1,b}(\xi,\tau)|f(x-\xi,t-\tau)-f(x-\xi,t)|\,d\tau \,d\xi$$
 and
 $$K_2 (x,t)=\int_{|\xi|<2R}\int^{\infty}_{\delta}\Phi_{\alpha,1,b}(\xi,\tau)|f(x-\xi,t-\tau)-f(x-\xi,t)|\,d\tau \,d\xi.$$
 From \eqref{R6} and \eqref{R3} we conclude
 $$K_1 (x,t)\leq\varepsilon\int_{|\xi|<2R}\left(\int^{\infty}_{0}\Phi_{\alpha,1,b}(\xi,\tau)\,d\tau\right)d\xi=\varepsilon\int_{|\xi|<2R}\frac{d\xi}{\gamma(n,\alpha)|\xi|^{n-2\alpha}}$$
 and letting $M=2\| f\|_{L^\infty (\mathbb{R}^n \times\mathbb{R})}$ and using \eqref{R2} we obtain
 \begin{align*}
 K_2 (x,t)&\leq M\int_{|\xi|<2R}\left(\int^{\infty}_{\delta}\Phi_{\alpha,1,b}(\xi,\tau)\,d\tau\right)d\xi\\
 &\leq MC(n,\alpha)\left(\frac{b}{\delta}\right)^{n/2-\alpha}|B_{2R}(0)|\to0\quad\text{ as }b\to0^+.
 \end{align*}
 The theorem therefore follows from \eqref{R7}.
\end{proof}

\section{Preliminary results for $J_\alpha$ problems}\label{sec7}
In this section we provide some lemmas needed for the proofs of our
results in Section \ref{sec4} dealing with solutions of the $J_\alpha$
problem \eqref{4.4}--\eqref{4.7}.

Let $\Omega=\mathbb{R}^n \times(a,b)$ where $n\geq1$ and $a<b$.  
Lemmas \ref{lem7.1} and \ref{lem7.2}
 give estimates for the convolution
\begin{equation}\label{7.1}
 (V_{\alpha,\Omega}f)(x,t)=\iint_{\Omega}\Phi_\alpha (x-\xi,t-\tau)f(\xi,\tau)\, d\xi \, d\tau
\end{equation}
where $\alpha>0$ and $\Phi_\alpha$ is defined in \eqref{2.2}.

\begin{rem}\label{rem7.1}
 Note that if $f:\mathbb{R}^n \times\mathbb{R}\to\mathbb{R}$ is a nonnegative measurable function such that $\| f\|_{L^\infty (\mathbb{R}^n \times\mathbb{R}_a)}=0$ then
 $$V_{\alpha,\Omega}f=J_\alpha f\quad\text{in } \Omega:=\mathbb{R}^n \times(a,b).$$
\end{rem}

\begin{lem}\label{lem7.1}
 For $\alpha>0,\,\Omega=\mathbb{R}^n \times(a,b)$ and $f\in L^\infty (\Omega)$ we have
 $$\| V_{\alpha,\Omega}f\|_{L^\infty (\Omega)}\leq\frac{(b-a)^\alpha}{\Gamma(\alpha+1)}\| f\|_{L^\infty (\Omega)}.$$
\end{lem}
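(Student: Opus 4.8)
The plan is to bound the integrand pointwise by $\|f\|_{L^\infty(\Omega)}$ and thereby reduce the estimate to computing the total mass of $\Phi_\alpha$ over the truncated region of integration. Since $\Phi_\alpha$ is nonnegative, for each $(x,t)\in\Omega$ we have
$$|(V_{\alpha,\Omega}f)(x,t)|\le\|f\|_{L^\infty(\Omega)}\iint_{\Omega}\Phi_\alpha(x-\xi,t-\tau)\,d\xi\,d\tau,$$
so it suffices to show that the double integral on the right is at most $(b-a)^\alpha/\Gamma(\alpha+1)$.

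First I would carry out the spatial integration. Because $\Phi_\alpha(x-\xi,t-\tau)$ vanishes unless $t-\tau>0$, and because $(x,t)\in\Omega$ forces the effective range of $\tau$ to be $(a,t)$ rather than all of $(a,b)$, Tonelli's theorem (applicable since the integrand is nonnegative) lets me write the double integral as the iterated integral $\int_a^t\bigl(\int_{\mathbb{R}^n}\Phi_\alpha(x-\xi,t-\tau)\,d\xi\bigr)\,d\tau$. The inner integral is the elementary Gaussian mass computation: using $\int_{\mathbb{R}^n}e^{-|y|^2/(4s)}\,dy=(4\pi s)^{n/2}$ with $s=t-\tau>0$ gives
$$\int_{\mathbb{R}^n}\Phi_\alpha(x-\xi,t-\tau)\,d\xi=\frac{(t-\tau)^{\alpha-1}}{\Gamma(\alpha)},$$
which is precisely the $a=1$ instance of identity \eqref{L2} already recorded in the proof of Theorem \ref{thm2.4}.

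It then remains to integrate in $\tau$. The substitution $s=t-\tau$ turns $\int_a^t(t-\tau)^{\alpha-1}/\Gamma(\alpha)\,d\tau$ into $\int_0^{t-a}s^{\alpha-1}/\Gamma(\alpha)\,ds=(t-a)^\alpha/\Gamma(\alpha+1)$, where I use the identity $\alpha\Gamma(\alpha)=\Gamma(\alpha+1)$. Since $t<b$ and $\alpha>0$, monotonicity of $r\mapsto r^\alpha$ yields $(t-a)^\alpha\le(b-a)^\alpha$, and taking the supremum over $(x,t)\in\Omega$ completes the proof.

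I expect no genuine obstacle here: every interchange of integrals is justified by Tonelli's theorem because the integrand is nonnegative, and the entire argument collapses to the two elementary facts—the unit Gaussian mass formula and $\alpha\Gamma(\alpha)=\Gamma(\alpha+1)$. The only point warranting a moment's care is noticing that, for $(x,t)\in\Omega$, the $\tau$-integration is effectively restricted to $(a,t)$, which is exactly what produces the factor $(t-a)^\alpha$ and hence the stated constant.
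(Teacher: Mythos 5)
Your proposal is correct and follows essentially the same route as the paper: bound $|f|$ by its sup norm, integrate out the Gaussian in $\xi$ to reduce to $\int_a^t (t-\tau)^{\alpha-1}/\Gamma(\alpha)\,d\tau=(t-a)^\alpha/\Gamma(\alpha+1)$, and then use $t<b$. The only cosmetic difference is that the paper writes $\Phi_\alpha(\cdot,t-\tau)=\frac{(t-\tau)^{\alpha-1}}{\Gamma(\alpha)}\Phi_1(\cdot,t-\tau)$ and cites the unit mass of $\Phi_1$ directly, whereas you compute the same Gaussian integral explicitly.
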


\begin{proof}
 The lemma is obvious if $\| f\|_{L^\infty (\Omega)}=0$.  Hence we can assume $\| f\|_{L^\infty (\Omega)}>0$.  Then for $(x,t)\in\Omega$
 \begin{align*}
  \frac{|(V_{\alpha,\Omega}f)(x,t)|}{\| f\|_{L^\infty (\Omega)}}&\leq\int^{t}_{a}\frac{(t-\tau)^{\alpha-1}}{\Gamma(\alpha)}\overbrace{\Biggl(\int_{\xi\in\mathbb{R}^n}\Phi_1 (x-\xi,t-\tau)d\xi\Biggr)}^{=1}d\tau\\
  &=-\frac{(t-\tau)^\alpha}{\Gamma(\alpha+1)}\Biggr|^{\tau=t}_{\tau=a}=\frac{(t-a)^\alpha}{\Gamma(\alpha+1)}\\
  &\leq\frac{(b-a)^\alpha}{\Gamma(\alpha+1)}.
 \end{align*}
\end{proof}

\begin{lem}\label{lem7.2}
 Let $p,q\in[1,\infty]$, $\alpha$, and $\delta$ satisfy
 \begin{equation}\label{7.2}
  0\leq\delta:=\frac{1}{p}-\frac{1}{q}<\frac{2\alpha}{n+2}<1.
 \end{equation}
 Then $V_{\alpha,\Omega}$ maps $L^p (\Omega)$ continuously into $L^q (\Omega)$ and for $f\in L^p (\Omega)$ we have
 $$\| V_{\alpha,\Omega}f\|_{L^q (\Omega)}\leq M\| f\|_{L^p (\Omega)}$$
 where 
 $$M=C(b-a)^{\frac{2\alpha-(n+2)\delta}{2}}\text{ for some constant }C=C(n,\alpha,\delta).$$
\end{lem}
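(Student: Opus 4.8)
The plan is to recognize $V_{\alpha,\Omega}$ as an honest convolution on all of $\mathbb{R}^n\times\mathbb{R}$ against a kernel that is compactly supported in time, and then invoke Young's convolution inequality. First I would extend $f\in L^p(\Omega)$ by zero to a function $\tilde f$ on $\mathbb{R}^n\times\mathbb{R}$ and set $K:=\Phi_\alpha\raisebox{2pt}{$\chi$}_{\{0<s<b-a\}}$. For $(x,t)\in\Omega$ and any $(\xi,\tau)$ with $\tilde f(\xi,\tau)\neq0$ we have $\tau\in(a,b)$, so that on the support of $\Phi_\alpha$ (where $t-\tau>0$) the time gap satisfies $t-\tau\in(0,b-a)$; hence $(V_{\alpha,\Omega}f)(x,t)=(K*\tilde f)(x,t)$ for every $(x,t)\in\Omega$. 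Restricting the time variable is essential: over the full half-line $(0,\infty)$ the kernel $\Phi_\alpha$ lies in no $L^r$, and it is precisely the finite time extent of $\Omega$ that renders the norm finite and produces the factor $(b-a)^{\frac{2\alpha-(n+2)\delta}{2}}$.

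Next I would apply Young's inequality with the exponent $r$ fixed by $\frac1r=1+\frac1q-\frac1p=1-\delta$. Since $0\le\delta<1$ we have $r\in[1,\infty)$, so this is legitimate for all the admissible endpoints $p,q\in[1,\infty]$, and it gives
$$\|V_{\alpha,\Omega}f\|_{L^q(\Omega)}\le\|K*\tilde f\|_{L^q(\mathbb{R}^n\times\mathbb{R})}\le\|K\|_{L^r}\,\|f\|_{L^p(\Omega)}.$$
It then remains to estimate $\|K\|_{L^r}$. Doing the Gaussian integral in the space variable first,
$$\int_{\mathbb{R}^n}\Phi_\alpha(z,s)^r\,dz=C_1(n,\alpha,r)\,s^{\,r(\alpha-1-n/2)+n/2},\qquad C_1=\frac{(4\pi)^{n(1-r)/2}}{r^{n/2}\,\Gamma(\alpha)^r},$$
reduces everything to the one-dimensional integral $\int_0^{b-a}s^{E}\,ds$ with $E:=r(\alpha-1-n/2)+n/2$.

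The crux is the convergence of this $s$-integral at the origin, which requires $E>-1$. Because $\frac{2\alpha}{n+2}<1$ forces $\alpha<\frac{n+2}{2}$, the coefficient $\alpha-1-\frac n2=\alpha-\frac{n+2}{2}$ is negative, and $E>-1$ is equivalent to $r<\frac{n+2}{n+2-2\alpha}$, i.e. to $\frac1r=1-\delta>\frac{n+2-2\alpha}{n+2}$, which is exactly the hypothesis $\delta<\frac{2\alpha}{n+2}$; this is the only place that hypothesis enters, so it is the heart of the argument. Granting it, $\int_0^{b-a}s^{E}\,ds=(b-a)^{E+1}/(E+1)$, and a short computation gives $\frac{E+1}{r}=\alpha-\frac{(n+2)\delta}{2}=\frac{2\alpha-(n+2)\delta}{2}$, whence
$$\|K\|_{L^r}=\Bigl(\tfrac{C_1}{E+1}\Bigr)^{1/r}(b-a)^{\frac{2\alpha-(n+2)\delta}{2}}=:C(n,\alpha,\delta)\,(b-a)^{\frac{2\alpha-(n+2)\delta}{2}}.$$
Since $r$, and hence $C_1$ and $E$, depend only on $n,\alpha,\delta$, this is the asserted bound, and the continuity of $V_{\alpha,\Omega}\colon L^p(\Omega)\to L^q(\Omega)$ follows at once. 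The remaining work—the explicit Gaussian integral and the bookkeeping of constants—is entirely routine.
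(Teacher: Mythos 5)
Your proof is correct and is essentially identical to the paper's: the paper likewise writes $V_{\alpha,\Omega}f=P_\alpha*\bar f$ with $P_\alpha=\Phi_\alpha\raisebox{2pt}{$\chi$}_{(0,b-a)}$ and the zero extension $\bar f$, applies Young's inequality with the same exponent $1/r=1-\delta$, and computes $\|P_\alpha\|_{L^r}$ by the same Gaussian-then-power integral, with the hypothesis $\delta<2\alpha/(n+2)$ entering exactly where you say it does.
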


\begin{proof}
 Define $r\in[1,\infty)$ by
 \begin{equation}\label{7.3}
  1-\frac{1}{r}=\delta
 \end{equation}
 and define $P_\alpha ,\bar f:\mathbb{R}^n \times\mathbb{R}\to\mathbb{R}$ by
 \[
P_\alpha (x,t)=\Phi_\alpha (x,t)\raisebox{2pt}{$\chi$}_{(0,b-a)}(t)
\]
 and
 $$\bar f(x,t)=
 \begin{cases}
  f(x,t) & \text{if }(x,t)\in\Omega\\
  0 & \text{elsewhere.}
 \end{cases}$$
 Since for $t\in(a,b)$ and $\tau\in(a,t)$ we have $t-\tau\in(0,b-a)$ we see for $(x,t)\in\Omega$ that
 \begin{align}\label{7.4}
  \notag V_{\alpha,\Omega}f(x,t)&=\int^{t}_{a}\int_{\xi\in\mathbb{R}^n}P_\alpha (x-\xi,t-\tau)f(\xi,\tau)\, d\xi \, d\tau\\
  \notag &=\iint_\Omega P_\alpha (x-\xi,t-\tau)f(\xi,\tau)\, d\xi \, d\tau\\
  &=(P_\alpha *\bar f)(x,t)
 \end{align}
 where $*$ is the convolution operation in $\mathbb{R}^n \times\mathbb{R}$.  
 
 Also since
 $$\int_{\mathbb{R}^n}e^{-r|x|^2 /(4t)}dx=\left(\frac{4\pi t}{r}\right)^{n/2}$$
 we have by \eqref{7.2} and \eqref{7.3} that
 \begin{align*}
  \| P_\alpha \|_{L^r (\mathbb{R}^n \times\mathbb{R})}&=\frac{1}{\Gamma(\alpha)(4\pi)^{n/2}}\biggl(\int^{b-a}_{0}t^{r(\alpha-1-n/2)}\biggl(\int_{x\in\mathbb{R}^n}e^{-r|x|^2 /(4t)}dx\Biggr)dt\Biggr)^{1/r}\\
  &=C(n,\alpha,r)\Biggl(\int^{b-a}_{0}t^{r(\alpha-1-n/2)+\frac{n}{2}}dt\Biggr)^{1/r}\\
  &=C(n,\alpha,r)(b-a)^{\frac{2\alpha-(n+2)\delta}{2}}.
 \end{align*}
 Thus by \eqref{7.4}, \eqref{7.2}, \eqref{7.3}, and Young's inequality we have
 \begin{align*}
  \| V_{\alpha,\Omega} f\|_{L^q (\Omega)}&=\| P_\alpha *\bar f\|_{L^q (\Omega)}\leq\| P_\alpha *\bar f\|_{L^q (\mathbb{R}^n \times\mathbb{R})}\\
  &\leq\| P_\alpha \|_{L^r (\mathbb{R}^n \times\mathbb{R})}\|\bar f\|_{L^p (\mathbb{R}^n \times\mathbb{R})}\\
  &\leq C(b-a)^{\frac{2\alpha-(n+2)\delta}{2}}\| f\|_{L^p (\Omega)}.
 \end{align*}
\end{proof}

\begin{lem}\label{lem7.3}
 Suppose $f,\,p$, and $ K$ satisfy \eqref{4.4}--\eqref{4.7} and $(\lambda,\alpha)\in A\cup B$.  Then
 $$f\in X^\infty .$$
\end{lem}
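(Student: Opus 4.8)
The plan is to run a bootstrap (Moser-type iteration) on the time-truncated integrability of $f$, feeding the smoothing estimate of Lemma \ref{lem7.2} into the pointwise inequality $f\le K(J_\alpha f)^\lambda$ from \eqref{4.5}. First I would fix $T>0$ and set $\Omega=\mathbb{R}^n\times(0,T)$; since $f=0$ on $\mathbb{R}^n\times(-\infty,0)$ by \eqref{4.6}, it suffices to prove $f\in L^\infty(\Omega)$ for every such $T$, because then $\|f\|_{L^\infty(\mathbb{R}^n\times\mathbb{R}_T)}=\|f\|_{L^\infty(\Omega)}<\infty$ for all $T$, which is exactly $f\in X^\infty$. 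As $f\ge0$ vanishes for $t<0$, Remark \ref{rem7.1} identifies $J_\alpha f=V_{\alpha,\Omega}f$ on $\Omega$, so the smoothing estimate of Lemma \ref{lem7.2} applies to $J_\alpha f$, and the base case $f\in L^p(\Omega)$ holds because $f\in X^p$.

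For the iteration I track the reciprocal exponents $x_k:=1/p_k$ with $p_0=p$. If $f\in L^{p_k}(\Omega)$, then Lemma \ref{lem7.2} gives $J_\alpha f\in L^{q_k}(\Omega)$ with any gain $1/q_k=x_k-\theta$, $0<\theta<\tfrac{2\alpha}{n+2}$; raising to the power $\lambda$ and invoking $f\le K(J_\alpha f)^\lambda$ yields $f\in L^{q_k/\lambda}(\Omega)$, i.e. the recursion $x_{k+1}=\lambda(x_k-\theta)$. Everything is thus governed by the affine map $g(x)=\lambda(x-\theta)$, and the iterates stay $\ge 0$ in reciprocal, i.e. $p_k\ge 1$, since the $x_k$ decrease from $x_0=1/p\le1$. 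One technical point: Lemma \ref{lem7.2} is stated only for $\alpha<\tfrac{n+2}{2}$; when $\alpha\ge\tfrac{n+2}{2}$ I would first factor $J_\alpha=J_{\gamma_1}\circ\cdots\circ J_{\gamma_m}$ via the semigroup identity Theorem \ref{thm2.2}(ii), with each $\gamma_i<\tfrac{n+2}{2}$ and $\sum\gamma_i=\alpha$, apply Lemma \ref{lem7.2} to each factor (each intermediate function again vanishes for $t<0$, so Remark \ref{rem7.1} keeps us on $\Omega$), and thereby realize a total gain $\theta$ arbitrarily close to $\tfrac{2\alpha}{n+2}$.

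It remains to show $x_k$ drops below $\tfrac{2\alpha}{n+2}$ in finitely many steps, for then a final application with $q=\infty$ gives $J_\alpha f\in L^\infty(\Omega)$ and hence $f\le K(J_\alpha f)^\lambda\in L^\infty(\Omega)$. The fixed point of $g$ is $x^*=\tfrac{\lambda\theta}{\lambda-1}$. In region $B$ ($0<\lambda<1$), $g$ is a contraction with $x^*<0$, and $x_{k+1}-x_k=(\lambda-1)x_k-\lambda\theta\le-\lambda\theta$ while $x_k>0$, so for any fixed $\theta$ the iterates decrease by at least $\lambda\theta$ per step and cross below $\tfrac{2\alpha}{n+2}$ after finitely many steps. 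In region $A$ with $\lambda=1$ the recursion is $x_{k+1}=x_k-\theta$, which likewise decreases without bound. The delicate case is region $A$ with $\lambda>1$: here $g$ is expanding with repelling fixed point $x^*>0$, so the iterates decrease to $-\infty$ precisely when $x_0=1/p<x^*=\tfrac{\lambda\theta}{\lambda-1}$. Choosing $\theta$ close to $\tfrac{2\alpha}{n+2}$ drives $x^*$ toward $\tfrac{\lambda}{\lambda-1}\cdot\tfrac{2\alpha}{n+2}$, and the inequality $\tfrac1p<\tfrac{\lambda}{\lambda-1}\cdot\tfrac{2\alpha}{n+2}$ is, after clearing denominators, exactly the defining condition $\alpha>\tfrac{n+2}{2p}\bigl(1-\tfrac1\lambda\bigr)$ of region $A$. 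Thus a permissible $\theta$ exists, the iterates descend monotonically past $\tfrac{2\alpha}{n+2}$ in finitely many steps, and the proof closes.

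The main obstacle is exactly this last case: verifying that in region $A$ with $\lambda>1$ the initial reciprocal exponent $1/p$ lies strictly below the repelling fixed point of the bootstrap map, which is where—and the only place where—the precise shape of region $A$ enters. The secondary technical nuisance is realizing the full smoothing gain $\tfrac{2\alpha}{n+2}$ when $\alpha\ge\tfrac{n+2}{2}$, handled by the semigroup factorization above.
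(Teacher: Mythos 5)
Your proof is correct and its engine is the same as the paper's: a bootstrap on $\Omega=\mathbb{R}^n\times(0,T)$ that feeds the $L^p\to L^q$ smoothing of Lemma \ref{lem7.2} (via Remark \ref{rem7.1}) into the inequality $f\le K(J_\alpha f)^\lambda$. The paper tracks the exponents $p_0$ directly and shows the ratio of successive exponents is bounded below by a constant exceeding $1$, after fixing a small $\varepsilon$ whose admissibility is exactly the strict inequality $\lambda<\frac{n+2}{n+2-2\alpha p}$ defining $A\cup B$ in its Case I; your reciprocal-exponent analysis of the affine map $x\mapsto\lambda(x-\theta)$ is the same computation in different bookkeeping, and your observation that $1/p<x^*$ is equivalent to $\alpha>\frac{n+2}{2p}\bigl(1-\frac1\lambda\bigr)$ is precisely where that inequality enters the paper as well. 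The one genuine divergence is the sub-case where Lemma \ref{lem7.2} is not directly applicable: the paper's Case II treats all $\alpha\ge\frac{n+2}{2p}$ at once via the pointwise kernel comparison $\Phi_\alpha(x-\xi,t-\tau)\le C(T,\alpha,\hat\alpha)\,\Phi_{\hat\alpha}(x-\xi,t-\tau)$ for $0<t-\tau<T$, reducing to a smaller $\hat\alpha$ with $(\lambda,\hat\alpha)\in A\cup B$, whereas you factor $J_\alpha=J_{\gamma_1}\circ\cdots\circ J_{\gamma_m}$. Your route works (and, as you could note, when $\alpha\ge\frac{n+2}{2p}$ the very first composite application already reaches $q=\infty$, so no iteration is needed there), but you should not cite Theorem \ref{thm2.2}(ii) for the factorization, since its hypothesis \eqref{2.12} fails in exactly the regime where you need it; instead invoke Lemma \ref{lem5.1} together with Tonelli's theorem, which give $J_{\gamma_1}(J_{\gamma_2}f)=J_{\gamma_1+\gamma_2}f$ pointwise for nonnegative $f$ with no restriction on $\gamma_1,\gamma_2>0$. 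This is a citation fix, not a gap.
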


\begin{proof}
 Let $T>0$ be fixed.  Then $f\in L^p (\mathbb{R}^n \times \mathbb{R}_T )$
 and to complete the proof it suffices to show
 \begin{equation}\label{7.5}
  f\in L^\infty (\mathbb{R}^n \times(0,T)).
 \end{equation}
 We consider two cases.\\
 \medskip
 
 \noindent\underline{Case I.}  Suppose $0<\alpha<\frac{n+2}{2p}$.  Then
 $$0<\lambda<\frac{n+2}{n+2-2\alpha p}$$
 and thus there exists $\varepsilon=\varepsilon(n,\lambda,\alpha,p)>0$ such that
 \[
\varepsilon<2\alpha p,\qquad 2\varepsilon<n+2-2\alpha p, \quad\text{ and }\quad
\lambda<\frac{n+2}{n+2-2\alpha p+2\varepsilon}.
\]
 Suppose
 \begin{equation}\label{7.6}
  f\in L^{p_0}(\mathbb{R}^n \times(0,T))\quad
\text{ for some }p_0 \in\left[p,\frac{n+2}{2\alpha}\right).
 \end{equation}
Then letting 
\[
q=\frac{(n+2)p_0}{n+2-2\alpha p_0 +\varepsilon}
\]
we have 
\[
\frac{1}{p_0}-\frac{1}{q}=\frac{2\alpha}{n+2}-\frac{\varepsilon}{(n+2)p_0}
\in \left(0,\frac{2\alpha}{n+2}\right).
\]
Hence by \eqref{4.6}, Remark \ref{rem7.1}, and Lemma \ref{lem7.2} we see that
 \[J_\alpha f\in L^q (\mathbb{R}^n \times(0,T)).\]
 Thus by \eqref{4.5} we find that
 \begin{equation}\label{7.7}
  0\leq f\leq K(J_\alpha f)^\lambda \in L^{q/\lambda}(\mathbb{R}^n \times(0,T)).
 \end{equation}
 Since
 \begin{align*}
  \frac{q/\lambda}{p_0}&=\frac{n+2}{\lambda(n+2-2\alpha p_0 +\varepsilon)}\geq\frac{n+2-2\alpha p+2\varepsilon}{n+2-2\alpha p_0 +\varepsilon}\\
  &\geq\frac{n+2-2\alpha p+2\varepsilon}{n+2-2\alpha p+\varepsilon}=C(n,\lambda,\alpha,p)>1
 \end{align*}
 we see that starting with $p_0 =p$ and iterating a finite number of times the process of going from \eqref{7.6} to \eqref{7.7} yields
 $$f\in L^{p_0}(\mathbb{R}^n \times(0,T))\quad\text{ for some }p_0 >\frac{n+2}{2\alpha}.$$
 Hence \eqref{7.5} follows from \eqref{4.5} and Lemma \ref{lem7.2}.\\
 \medskip
 
 \noindent\underline{Case II.}  Suppose $\alpha\geq\frac{n+2}{2p}$.  Clearly there exists $\widehat{\alpha}\in(0,\frac{n+2}{2p})$ such that $(\lambda,\widehat{\alpha})\in A\cup B$.  Then for $(x,t),(\xi,\tau)\in\mathbb{R}^n \times(0,T)$ we have
 \begin{align*}
  \frac{\Phi_\alpha (x-\xi,t-\tau)}{\Phi_{\widehat{\alpha}}(x-\xi,t-\tau)}&=(t-\tau)^{\alpha-\widehat{\alpha}}\Gamma(\widehat{\alpha})/\Gamma(\alpha)\\
  &\leq T^{\alpha-\widehat{\alpha}}\Gamma(\widehat{\alpha})/\Gamma(\alpha)\\
  &=C(T,\alpha,\widehat{\alpha}).
 \end{align*}
 Thus for $(x,t)\in\mathbb{R}^n \times(0,T)$ we have
 $$J_\alpha f(x,t)\leq C(T,\alpha,\widehat{\alpha})J_{\widehat{\alpha}}f(x,t)$$
 and hence by \eqref{4.5} we see that
 $$0\leq f\leq K C(T,\alpha,\widehat{\alpha})^\lambda
 (J_{\widehat{\alpha}}f)^\lambda \quad\text{almost everywhere in }\mathbb{R}^n \times(0,T).$$
 It follows therefore from Case I that $f$ satisfies \eqref{7.5}.
\end{proof}

\begin{lem}\label{lem7.4}
 Suppose $x\in\mathbb{R}^n$ and $t,\tau\in(0,\infty)$ satisfy
 \begin{equation}\label{7.8}
  |x|^2 <t \quad\text{and}\quad  \frac{t}{4}<\tau<\frac{3t}{4}.
 \end{equation}
 Then
 $$\int_{|\xi|^2 <\tau}\Phi_1 (x-\xi,t-\tau)\,d\xi\geq C(n)>0$$
 where $\Phi_\alpha$ is defined by \eqref{2.2}.
\end{lem}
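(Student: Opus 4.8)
The plan is to exploit the parabolic scaling of the heat kernel to eliminate the variable $t$ altogether, thereby reducing the claimed lower bound to a compactness argument on a fixed parameter set. First I would recall that for $\alpha=1$ the kernel \eqref{2.2} is the ordinary Gaussian heat kernel
$$\Phi_1(x,t)=\frac{1}{(4\pi t)^{n/2}}e^{-|x|^2/(4t)}\quad\text{for }t>0.$$

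Next, in the integral over $\{|\xi|^2<\tau\}$ I would substitute $\xi=\sqrt{t}\,\eta$ and introduce the normalized variables $z:=x/\sqrt{t}$ and $s:=\tau/t$. Under hypothesis \eqref{7.8} these satisfy $|z|<1$ and $s\in(1/4,3/4)$. Since $|x-\xi|^2=t|z-\eta|^2$, $t-\tau=t(1-s)$, and $d\xi=t^{n/2}\,d\eta$, every power of $t$ cancels and one is left with
$$\int_{|\xi|^2<\tau}\Phi_1(x-\xi,t-\tau)\,d\xi=\int_{|\eta|<\sqrt{s}}\frac{1}{(4\pi(1-s))^{n/2}}e^{-|z-\eta|^2/(4(1-s))}\,d\eta=:G(z,s),$$
a quantity depending only on $n$, $z$, and $s$, and in particular \emph{not} on $t$.

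It then remains only to bound $G$ below by a positive constant. I would observe that $G$ extends to a continuous function on the compact set $\overline{B_1(0)}\times[1/4,3/4]$: on this set $1-s\in[1/4,3/4]$ keeps the prefactor bounded, the Gaussian integrand is jointly continuous in all its arguments, and the radius $\sqrt{s}$ of the domain of integration varies continuously with $s$. Moreover $G(z,s)>0$ for every such $(z,s)$, because the integrand is strictly positive and the ball $\{|\eta|<\sqrt{s}\}$ has radius at least $1/2$, hence positive measure. By compactness $G$ attains a minimum on $\overline{B_1(0)}\times[1/4,3/4]$, and this minimum is a positive constant $C(n)$ depending only on $n$; since the open region $|z|<1$, $1/4<s<3/4$ is contained in this compact set, the bound $G(z,s)\ge C(n)>0$ holds under \eqref{7.8}, which is the assertion of the lemma.

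The only point that genuinely requires attention---and the main (if mild) obstacle---is recognizing that the parabolic rescaling $\xi\mapsto\sqrt{t}\,\eta$ together with $z=x/\sqrt{t}$ and $s=\tau/t$ renders the integral independent of $t$; once this is seen, the positive lower bound follows immediately from continuity and compactness, with no need to evaluate any Gaussian integral explicitly. One could instead produce a fully explicit constant by integrating over a suitable fixed sub-ball of $\{|\eta|<\sqrt{s}\}$ on which both $|z-\eta|$ and $1-s$ are controlled, but the compactness argument is cleaner.
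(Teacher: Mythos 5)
Your proof is correct, and it reaches the conclusion by a somewhat different route than the paper. Both arguments begin with a parabolic rescaling, but they normalize differently and finish differently. The paper substitutes $z=\frac{x-\xi}{\sqrt{4(t-\tau)}}$, which turns the quantity into $\frac{1}{\pi^{n/2}}\int_{|z-c|<\rho}e^{-|z|^2}\,dz$ with center $c=\frac{x}{\sqrt{4(t-\tau)}}$ satisfying $|c|\le\frac{\sqrt{t}}{\sqrt{4(t-\tau)}}\le 1$ and radius $\rho=\frac{\sqrt{\tau}}{\sqrt{4(t-\tau)}}\ge\frac{1}{2\sqrt{3}}$ under \eqref{7.8}; it then invokes the monotonicity of $\int_B e^{-|z|^2}dz$ under pushing the center away from the origin and shrinking the radius, arriving at the fully explicit constant $\frac{1}{\pi^{n/2}}\int_{|z-e_1|<\frac{1}{2\sqrt{3}}}e^{-|z|^2}\,dz$. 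You instead rescale by $\sqrt{t}$ so that the integral becomes a function $G(z,s)$ of the normalized data $(z,s)=(x/\sqrt{t},\tau/t)$ alone, and conclude by continuity and strict positivity of $G$ on the compact set $\overline{B_1(0)}\times[1/4,3/4]$. Your computation is right (all powers of $t$ do cancel), the continuity claim is justified (the prefactor is bounded since $1-s\ge 1/4$, and the indicator of $\{|\eta|<\sqrt{s}\}$ varies continuously in measure, so dominated convergence applies), and positivity holds pointwise since $\sqrt{s}\ge 1/2$. What the paper's monotonicity argument buys is an explicit value of $C(n)$ with no appeal to compactness; what your argument buys is robustness --- it would survive essentially unchanged if the Gaussian were replaced by any positive continuous kernel with the same scaling, whereas the paper's final step leans on the radial monotonicity of $e^{-|z|^2}$. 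You correctly note that an explicit constant could also be extracted from your setup by restricting to a fixed sub-ball, which would in effect reproduce the paper's estimate.
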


\begin{proof}
 Making the change of variables $z=\frac{x-\xi}{\sqrt{4(t-\tau)}}$, letting $e_1 =(1,0,...,0)$, and using \eqref{7.8} and \eqref{2.2} we find that
 \begin{align*}
  \int_{|\xi|^2 <\tau}\Phi_1 (x-\xi,t-\tau)\,d\xi
  &=\frac{1}{\pi^{n/2}}\int_{|z-\frac{x}{\sqrt{4(t-\tau)}}|<\frac{\sqrt{\tau}}{\sqrt{4(t-\tau)}}}e^{-|z|^2}dz\\
  &\geq\frac{1}{\pi^{n/2}}\int_{|z-\frac{\sqrt{t}}{\sqrt{4(t-\tau)}}e_1 |<\frac{\sqrt{\tau}}{\sqrt{4(t-\tau)}}}e^{-|z|^2}dz\\
  &\geq\frac{1}{\pi^{n/2}}\int_{|z-e_1 |<\frac{1}{2\sqrt{3}}}e^{-|z|^2}dz\\
  &=C(n)>0
 \end{align*}
 where in this calculation we used the fact that the integral of $e^{-|z|^2}$ over a ball is decreased if the absolute value of the center of the ball is increased or the radius of the ball is decreased.
\end{proof}

\begin{lem}\label{lem7.5}
 For $\tau<t\leq T$ and $|x|\leq\sqrt{T-t}$ we have
 $$\int_{|\xi|<\sqrt{T-\tau}}\Phi_1(x-\xi,t-\tau)\,d\xi\geq C$$
 where $C=C(n)$ is a positive constant. 
\end{lem}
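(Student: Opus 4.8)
The plan is to reduce the integral to a Gaussian integral over a Euclidean ball by the same change of variables used in Lemma~\ref{lem7.4}, and then to exhibit a single fixed ball---independent of $x$, $t$, $\tau$, and $T$---that is always contained in the domain of integration. Recall that $\alpha=1$ in $\Phi_1$, so $\Phi_1(x-\xi,t-\tau)=(4\pi(t-\tau))^{-n/2}e^{-|x-\xi|^2/(4(t-\tau))}$.

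First I would make the substitution $z=\frac{x-\xi}{\sqrt{4(t-\tau)}}$, which turns the integral into
$$\int_{|\xi|<\sqrt{T-\tau}}\Phi_1(x-\xi,t-\tau)\,d\xi=\frac{1}{\pi^{n/2}}\int_{|z-c|<R}e^{-|z|^2}\,dz,$$
where $c=\frac{x}{\sqrt{4(t-\tau)}}$ and $R=\frac{1}{2}\sqrt{\frac{T-\tau}{t-\tau}}$. The key elementary observation is the relation $R^2-|c|^2\ge\frac14$: writing $T-\tau=(T-t)+(t-\tau)$ gives $R^2=\frac14+\frac14\cdot\frac{T-t}{t-\tau}$, while the hypothesis $|x|^2\le T-t$ gives $|c|^2\le\frac14\cdot\frac{T-t}{t-\tau}$, so that $R^2-|c|^2\ge\frac14$.

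By rotational invariance of $e^{-|z|^2}$ I may assume $c=\gamma e_1$ with $\gamma=|c|\ge0$ and $e_1=(1,0,\dots,0)$. Writing $z=(z_1,z')$ with $z'\in\mathbb{R}^{n-1}$, the ball $|z-c|<R$ is exactly the set where $z_1^2-2\gamma z_1+|z'|^2<R^2-\gamma^2$. If $z$ lies in the fixed half-ball $\{z_1\ge0,\ z_1^2+|z'|^2<\tfrac14\}$, then, since $\gamma,z_1\ge0$,
$$z_1^2-2\gamma z_1+|z'|^2\le z_1^2+|z'|^2<\tfrac14\le R^2-\gamma^2,$$
so this half-ball is contained in $B(c,R)$ for every admissible choice of the parameters. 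Therefore, using that $e^{-|z|^2}$ is even,
$$\int_{|z-c|<R}e^{-|z|^2}\,dz\ge\int_{\{z_1\ge0\}\cap\{|z|<1/2\}}e^{-|z|^2}\,dz=\frac12\int_{|z|<1/2}e^{-|z|^2}\,dz,$$
and the lemma follows with $C=C(n)=\frac{1}{2\pi^{n/2}}\int_{|z|<1/2}e^{-|z|^2}\,dz>0$.

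The only point requiring care is the uniform lower bound across the degenerate regime $t-\tau\to0^+$, where both $|c|$ and $R$ blow up and the ball $B(c,R)$ becomes enormous and far off-center; the naive estimate that $B(c,R)$ contains a neighborhood of the origin is useless there, because the inward distance $R-|c|$ tends to $0$. This is exactly what the inequality $R^2-|c|^2\ge\frac14$ resolves, since it pins the same fixed half-ball inside $B(c,R)$ regardless of how large the parameters become. This is the step I expect to be the main (and essentially only) obstacle; the remaining computations are routine.
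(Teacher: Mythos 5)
Your proof is correct. It follows the same first step as the paper --- rescaling $\xi$ so the integral becomes $\pi^{-n/2}\int_{|z-c|<R}e^{-|z|^2}\,dz$ with $c=x/\sqrt{4(t-\tau)}$ and $R=\tfrac12\sqrt{(T-\tau)/(t-\tau)}$ --- but the way you extract a uniform lower bound is genuinely different. The paper argues in two stages: first it replaces the center $c$ by the worst-case center $\rho e_1$ with $\rho=\sqrt{(T-\tau)/(t-\tau)}$, invoking the fact (also used in Lemma~\ref{lem7.4}) that the integral of a radially decreasing function over a ball decreases as the center moves away from the origin; then it observes that the balls $B(\rho e_1,\rho)$, $\rho\ge1$, are nested and all contain $B(e_1,1)$, so the integral over $B(e_1,1)$ serves as the constant. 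You instead prove the single algebraic inequality $R^2-|c|^2\ge\tfrac14$ (from $T-\tau=(T-t)+(t-\tau)$ and $|x|^2\le T-t$) and use it to show that the fixed half-ball $\{z_1\ge0\}\cap B(0,\tfrac12)$ sits inside \emph{every} admissible ball $B(c,R)$, after rotating so that $c=|c|e_1$; the verification $z_1^2-2|c|z_1+|z'|^2\le|z|^2<\tfrac14\le R^2-|c|^2$ is correct, and the symmetry step giving $\tfrac12\int_{|z|<1/2}e^{-|z|^2}dz$ is fine. What your route buys is self-containedness: it avoids the rearrangement-type monotonicity fact entirely and replaces it with a bare set inclusion, which also makes the uniformity in the degenerate regime $t-\tau\to0^+$ completely transparent. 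What the paper's route buys is economy within its own context, since the monotonicity fact is already on the table from the proof of Lemma~\ref{lem7.4}. Either argument yields a constant depending only on $n$, as required.
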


\begin{proof}
 Making the change of variables $z=\frac{x-\xi}{\sqrt{t-\tau}}$ 
and letting $e_1 =(1,0,...,0)$ we get
 \begin{align}
  \notag
   \int_{|\xi|<\sqrt{T-\tau}}\Phi_1(x-\xi,t-\tau)\,d\xi&=\frac{1}{(4\pi)^{n/2}}
\frac{1}{(t-\tau)^{n/2}}\int_{|\xi|<\sqrt{T-\tau}}e^{-\frac{|x-\xi|^2}{4(t-\tau)}}d\xi\notag\\
  &=\frac{1}{(4\pi)^{n/2}}\int_{|z-\frac{x}{\sqrt{t-\tau}}|<\frac{\sqrt{T-\tau}}{\sqrt{t-\tau}}}
e^{-|z|^2/4}dz \label{7.9}\\
  &\geq\frac{1}{(4\pi)^{n/2}}\int_{|z-\frac{\sqrt{T-\tau}}{\sqrt{t-\tau}}e_1|<\frac{\sqrt{T-\tau}}{\sqrt{t-\tau}}}e^{-|z|^2/4}dz
\label{7.10}\\
  &\geq\frac{1}{(4\pi)^{n/2}}\int_{|z-e_1
    |<1}e^{-|z|^2/4}dz, \label{7.11}
 \end{align}
 where the last two inequalities need some explanation.  Since
 $|x|\leq\sqrt{T-t}<\sqrt{T-\tau}$, the center of the ball of integration
 in \eqref{7.9} is closer to the origin than the center of the ball
 of integration in \eqref{7.10}.  Thus, since the integrand is a
 decreasing function of $|z|$, we obtain \eqref{7.10}.  Since
 $\sqrt{T-\tau}\geq\sqrt{t-\tau}$, the ball of integration in \eqref{7.10}
 contains the ball of integration in \eqref{7.11} and hence
 \eqref{7.11} holds.
\end{proof}

\begin{lem}\label{lem7.6}
 Suppose $\alpha>0$, $\gamma>0$, $p\geq1$, and 
 \[f_0
 (x,t)=\left(\frac{1}{t}\right)^{\frac{n+2}{2p}-\gamma}\raisebox{2pt}{$\chi$}_{\Omega_0}(x,t)\quad\text{
   where }\Omega_0 =\{(x,t)\in \mathbb{R}^n\times\mathbb{R}:|x|^2 <t\}.\]
 Then $f_0 \in X^p$ and
 \[C_1 \left(\frac{1}{t}\right)^{\frac{n+2}{2p}-\gamma-\alpha}\leq J_\alpha f_0 (x,t)\leq C_2 \left(\frac{1}{t}\right)^{\frac{n+2}{2p}-\gamma-\alpha}\quad\text{for }(x,t)\in\Omega_0\]
 where $C_1$ and $C_2$ are positive constants depending only on $n,\alpha,\gamma$, and $p$.
\end{lem}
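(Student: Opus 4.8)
The plan is to reduce the whole estimate to a single bounded quantity by parabolic rescaling, after first verifying integrability. Since all integrands in sight are nonnegative ($f_0\geq0$ and $\Phi_\alpha\geq0$), Tonelli's theorem lets me change variables and split integrals freely, so no interchange needs separate justification.

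First I would check $f_0\in X^p$. Fix $T$. Because $\Omega_0\subset\mathbb{R}^n\times(0,\infty)$, the norm over $\mathbb{R}^n\times\mathbb{R}_T$ is $0$ for $T\le0$, and for $T>0$ I would use that the slice $\{x:|x|^2<\tau\}$ has $n$-dimensional measure $C(n)\tau^{n/2}$ to compute
\[
\|f_0\|_{L^p(\mathbb{R}^n\times\mathbb{R}_T)}^p
=\int_0^T C(n)\tau^{n/2}\,\tau^{-(\frac{n+2}{2}-\gamma p)}\,d\tau
=C(n)\int_0^T\tau^{\gamma p-1}\,d\tau=\frac{C(n)}{\gamma p}T^{\gamma p}<\infty,
\]
the convergence at $\tau=0$ being exactly where $\gamma>0$ is used. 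Next, for $(x,t)\in\Omega_0$, since $f_0$ is supported in $\Omega_0$ and $\Phi_\alpha(\cdot,s)=0$ for $s\le0$, I would write
\[
J_\alpha f_0(x,t)=\int_0^t\int_{|\xi|^2<\tau}\Phi_\alpha(x-\xi,t-\tau)\,\tau^{-(\frac{n+2}{2p}-\gamma)}\,d\xi\,d\tau
\]
and substitute $\tau=t\sigma$, $\xi=\sqrt{t}\,\zeta$. Using the parabolic homogeneity $\Phi_\alpha(\sqrt{t}\,z,ts)=t^{\alpha-1-n/2}\Phi_\alpha(z,s)$ (immediate from \eqref{2.2}) together with the Jacobian factor $t^{n/2}\cdot t$, all powers of $t$ collect into the single exponent $-(\frac{n+2}{2p}-\gamma-\alpha)$, giving
\[
J_\alpha f_0(x,t)=t^{-(\frac{n+2}{2p}-\gamma-\alpha)}\,G(\eta),\qquad \eta:=\tfrac{x}{\sqrt{t}},\quad|\eta|<1,
\]
where
\[
G(\eta)=\int_0^1\sigma^{-(\frac{n+2}{2p}-\gamma)}\int_{|\zeta|^2<\sigma}\Phi_\alpha(\eta-\zeta,1-\sigma)\,d\zeta\,d\sigma.
\]
It then suffices to bound $G(\eta)$ above and below by positive constants depending only on $n,\alpha,\gamma,p$, uniformly for $|\eta|<1$.

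For the upper bound I would split the $\sigma$-integral at $\tfrac12$. On $(\tfrac12,1)$ I drop the constraint on $\zeta$ and use $\int_{\mathbb{R}^n}\Phi_\alpha(\eta-\zeta,1-\sigma)\,d\zeta=(1-\sigma)^{\alpha-1}/\Gamma(\alpha)$, whose integral against the bounded factor $\sigma^{-(\frac{n+2}{2p}-\gamma)}$ converges at $\sigma=1$ because $\alpha>0$. On $(0,\tfrac12)$ this crude estimate is \emph{not} integrable, so here I use instead that $1-\sigma\in(\tfrac12,1)$ forces $\Phi_\alpha(\eta-\zeta,1-\sigma)\le C(n,\alpha)$ and that $\{|\zeta|^2<\sigma\}$ has volume $C(n)\sigma^{n/2}$; the inner integral is then $\le C\sigma^{n/2}$, and the contribution is controlled by $\int_0^{1/2}\sigma^{n/2-\frac{n+2}{2p}+\gamma}\,d\sigma$, which converges precisely because $p\ge1$ and $\gamma>0$ make the exponent exceed $-1$. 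This refinement near $\sigma=0$ is the main obstacle: it is the only place where one must resist the temptation to bound $\Phi_\alpha$ by integrating over all of $\mathbb{R}^n$, and it is exactly where the hypotheses $p\ge1$, $\gamma>0$ are consumed.

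For the lower bound I would factor $\Phi_\alpha(y,s)=\tfrac{s^{\alpha-1}}{\Gamma(\alpha)}\Phi_1(y,s)$ and restrict the $\sigma$-integral to $(\tfrac14,\tfrac34)$, on which both $\sigma^{-(\frac{n+2}{2p}-\gamma)}$ and $(1-\sigma)^{\alpha-1}$ are bounded below by positive constants. On this range Lemma~\ref{lem7.4}, applied with $t=1$, $\tau=\sigma$, and $x=\eta$ (legitimate since $|\eta|^2<1$ and $\tfrac14<\sigma<\tfrac34$), yields $\int_{|\zeta|^2<\sigma}\Phi_1(\eta-\zeta,1-\sigma)\,d\zeta\ge C(n)>0$ uniformly in $\eta$. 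Integrating this positive lower bound over $(\tfrac14,\tfrac34)$ produces a constant $C_1>0$ independent of $\eta$, which together with the upper bound $C_2$ completes the proof.
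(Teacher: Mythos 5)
Your proof is correct and follows essentially the same route as the paper's: the same splitting of the $\tau$-integral at the midpoint for the upper bound (with the volume-of-the-ball estimate near $\tau=0$, which is indeed the crux), the same restriction to the middle range together with Lemma \ref{lem7.4} for the lower bound, and the same computation for $f_0\in X^p$. The only difference is organizational — you perform the parabolic rescaling $\tau=t\sigma$, $\xi=\sqrt{t}\,\zeta$ once at the outset to isolate a bounded profile $G(\eta)$, whereas the paper keeps $t$ explicit and substitutes $\tau=ts$ separately inside each estimate — which changes nothing of substance.
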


\begin{proof}
 For $T>0$ we have
 \begin{align*}
  \| f_0 \|^{p}_{L^p(\mathbb{R}^n \times\mathbb{R}_T )}&=\int^{T}_{0}\int_{|x|<\sqrt{t}}\left(\frac{1}{t}\right)^{\frac{n+2}{2}-\gamma p}\, dx \, dt\\
  &=C(n)\int^{T}_{0}t^{\gamma p-1}dt<\infty
 \end{align*}
because $\gamma p>0$. Hence $f_0\in X^p$.  

Also for $(x,t)\in \mathbb{R}^n\times (0,\infty)$  we have 
\begin{align}
  \notag J_\alpha f_0 (x,t)&=\int^{t}_{-\infty}\int_{\xi\in\mathbb{R}^n}\Phi_\alpha (x-\xi,t-\tau)f_0 (\xi,\tau)\, d\xi \, d\tau\\
\label{7.12}&=\frac{1}{\Gamma(\alpha)}\int^{t}_{0}(t-\tau)^{\alpha-1}\left(\frac{1}{\tau}\right)^{\frac{n+2}{2p}-\gamma}\biggl(\int_{|\xi|^2 <\tau}\Phi_1(x-\xi,t-\tau)\,d\xi\Biggr)d\tau.
\end{align}
Hence by Lemma \ref{lem7.4} we see for $(x,t)\in\Omega_0$ that 
\begin{align*}
  J_\alpha f_0 (x,t)
&\geq C(n,\alpha)\int^{3t/4}_{t/4}(t-\tau)^{\alpha-1}\left(\frac{1}{\tau}\right)^{\frac{n+2}{2p}-\gamma}d\tau\\
&=C(n,\alpha)t^{\alpha-\frac{n+2}{2p}+\gamma}\int^{3/4}_{1/4}(1-s)^{\alpha-1}\left(\frac{1}{s}\right)^{\frac{n+2}{2p}-\gamma}ds\quad\text{where }\tau=ts\\
  &=C(n,\alpha,\gamma,p)t^{\alpha-\frac{n+2}{2p}+\gamma}.
 \end{align*}

 Moreover for $(x,t)\in\mathbb{R}^n \times(0,\infty)$ and $0<\tau<t/2$ we have
 \begin{align*}
   \int_{|\xi|^2 <\tau}\Phi_1
   (x-\xi,t-\tau)\,d\xi&=\frac{1}{\pi^{n/2}}
\int_{|z-\frac{x}{\sqrt{4(t-\tau)}}|<\frac{\sqrt{\tau}}{\sqrt{4(t-\tau)}}}e^{-|z|^2}dz
\quad\text{where }z=\frac{x-\xi}{\sqrt{4(t-\tau)}}\\
   &\leq\frac{|B_1(0)|}{\pi^{n/2}}
\Biggl(\frac{\sqrt{\tau}}{\sqrt{4(t-\tau)}}\Biggr)^n
 \end{align*}
 and for $(x,t)\in\mathbb{R}^n \times(0,\infty)$ and $t/2<\tau<t$ we have
 $$\int_{|\xi|^2 <\tau}\Phi_1 (x-\xi,t-\tau)\,d\xi\leq\int_{\mathbb{R}^n}\Phi_1 (x-\xi,t-\tau)\,d\xi=1.$$
 Thus by \eqref{7.12} for $(x,t)\in\mathbb{R}^n \times(0,\infty)$ we have
 \begin{align*}
  J_\alpha f_0 (x,t)&\leq C(n,\alpha)\Biggl[\int^{t/2}_{0}(t-\tau)^{\alpha-1}\left(\frac{1}{\tau}\right)^{\frac{n+2}{2p}-\gamma}\left(\frac{\tau}{t-\tau}\right)^{n/2}d\tau\\
  &\phantom{\leq C(n,\alpha)+}+\int^{t}_{t/2}(t-\tau)^{\alpha-1}\left(\frac{1}{\tau}\right)^{\frac{n+2}{2p}-\gamma}d\tau\Biggr]\\
  &=C(n,\alpha)t^{\alpha-\frac{n+2}{2p}+\gamma}\Biggl[\int^{1/2}_{0}(1-s)^{\alpha-1}\left(\frac{1}{s}\right)^{\frac{n+2}{2p}-\gamma}\left(\frac{s}{1-s}\right)^{n/2}ds\\
  &\phantom{=C(n,\alpha)t^{\alpha-\frac{n+2}{2p}+\gamma}+}+\int^{1}_{1/2}(1-s)^{\alpha-1}\left(\frac{1}{s}\right)^{\frac{n+2}{2p}-\gamma}ds\Biggr]\\
  &=C(n,\alpha,\gamma,p)t^{\alpha-\frac{n+2}{2p}+\gamma}
 \end{align*}
 because $\alpha$ and $\gamma$ are positive.
\end{proof}

\begin{lem}\label{lem7.7}
 Suppose $\alpha>0$, $\gamma\in\mathbb{R}$, $0\leq t_0 <T,\,p\in[1,\infty)$, and
 \[
f(x,t)=\left(\frac{1}{T-t}\right)^{\frac{n+2}{2p}-\gamma}\raisebox{2pt}{$\chi$}_\Omega(x,t)
\]
 where
 $$\Omega=\{(x,t)\in\mathbb{R}^n \times(t_0 ,T):|x|<\sqrt{T-t}\}.$$
 Then
 $$J_\alpha f(x,t)\geq C\left(\frac{1}{T-t}\right)^{\frac{n+2}{2p}-\gamma-\alpha}$$
 for $(x,t)\in\Omega^+ :=\{(x,t)\in\Omega:\frac{T+t_0}{2}<t<T\}$ where $C=C(n,\alpha,\gamma,p)>0$.  Moreover,
 \begin{equation}\label{7.13}
  f\in L^p (\mathbb{R}^n \times\mathbb{R})\text{ if and only if }\gamma>0
 \end{equation}
 and in this case
 \begin{equation}\label{7.14}
  \| f\|^{p}_{L^p (\mathbb{R}^n \times\mathbb{R})}=C(n)\int^{T-t_0}_{0}s^{\gamma p-1}ds.
 \end{equation}
\end{lem}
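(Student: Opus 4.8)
The plan is to treat the two assertions separately, handling the $L^p$ statements \eqref{7.13}--\eqref{7.14} by direct integration and the pointwise lower bound by adapting the argument used for Lemma \ref{lem7.6}, with Lemma \ref{lem7.5} now playing the role that Lemma \ref{lem7.4} played there.

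For the $L^p$ statement I would first integrate in $x$ for $t$ fixed. Since the slice $\{x:|x|<\sqrt{T-t}\}$ is a ball of radius $\sqrt{T-t}$, its volume is $C(n)(T-t)^{n/2}$, so
\begin{align*}
\| f\|_{L^p(\mathbb{R}^n\times\mathbb{R})}^p
&=\int_{t_0}^T\left(\frac{1}{T-t}\right)^{\frac{n+2}{2}-\gamma p}\left(\int_{|x|<\sqrt{T-t}}dx\right)dt\\
&=C(n)\int_{t_0}^T(T-t)^{\frac{n}{2}-\frac{n+2}{2}+\gamma p}\,dt
=C(n)\int_{t_0}^T(T-t)^{\gamma p-1}\,dt.
\end{align*}
The change of variables $s=T-t$ then gives \eqref{7.14}, and since $p\geq1$ the resulting integral $\int_0^{T-t_0}s^{\gamma p-1}\,ds$ is finite precisely when $\gamma p>0$, i.e.\ when $\gamma>0$, which is \eqref{7.13}.

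For the lower bound, fix $(x,t)\in\Omega^+$. Using $\Phi_\alpha(y,s)=\frac{s^{\alpha-1}}{\Gamma(\alpha)}\Phi_1(y,s)$, which follows from \eqref{2.2}, and the fact that $f(\cdot,\tau)$ is supported in $\{|\xi|<\sqrt{T-\tau}\}$ for $\tau\in(t_0,T)$, I would write
\[
J_\alpha f(x,t)=\frac{1}{\Gamma(\alpha)}\int_{t_0}^t(t-\tau)^{\alpha-1}\left(\frac{1}{T-\tau}\right)^{\frac{n+2}{2p}-\gamma}\left(\int_{|\xi|<\sqrt{T-\tau}}\Phi_1(x-\xi,t-\tau)\,d\xi\right)d\tau
\]
and then discard all of the $\tau$-integral except the subinterval $(2t-T,t)$. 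The key point---and the reason $\Omega^+$ is defined by $\frac{T+t_0}{2}<t<T$---is that this condition is exactly what guarantees $2t-T>t_0$, so that the subinterval lies inside the support range $(t_0,T)$. On this subinterval one has $\tau<t\le T$ and $|x|\le\sqrt{T-t}$, so Lemma \ref{lem7.5} bounds the inner spatial integral below by $C(n)>0$; moreover $T-t\le T-\tau\le 2(T-t)$, so $(T-\tau)^{-(\frac{n+2}{2p}-\gamma)}\ge C(T-t)^{-(\frac{n+2}{2p}-\gamma)}$. Finally $\int_{2t-T}^t(t-\tau)^{\alpha-1}\,d\tau=(T-t)^\alpha/\alpha$, and multiplying these three factors yields the claimed bound $C(T-t)^{-(\frac{n+2}{2p}-\gamma-\alpha)}$.

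The only real subtlety is the elementary estimate $(T-\tau)^{-(\frac{n+2}{2p}-\gamma)}\ge C(T-t)^{-(\frac{n+2}{2p}-\gamma)}$, which must be argued by cases according to the sign of the exponent $\frac{n+2}{2p}-\gamma$: when it is nonnegative one uses $T-\tau\le 2(T-t)$, and when it is negative one uses $T-\tau\ge T-t$. Everything else is bookkeeping, and since the integrand defining $J_\alpha f$ is nonnegative no interchange-of-integration justification beyond Tonelli's theorem is needed.
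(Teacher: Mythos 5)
Your proof is correct and follows essentially the same route as the paper's: the $L^p$ computation is identical, and the lower bound uses Lemma \ref{lem7.5} for the spatial integral in exactly the same way. The only (immaterial) difference is in the $\tau$-integral: you restrict to the subinterval $(2t-T,t)$ where $T-\tau\asymp T-t$, whereas the paper keeps all of $(t_0,t)$, substitutes $t-\tau=(T-t)\zeta$, and bounds the resulting integral below using $\frac{t-t_0}{T-t}>1$ on $\Omega^+$; both exploit the definition of $\Omega^+$ in the same essential way.
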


\begin{proof}
 Since
 \begin{align*}
  \| f\|^{p}_{L^p (\mathbb{R}^n \times\mathbb{R})}&=\int^{T}_{t_0}\int_{|x|<\sqrt{T-t}}(T-t)^{\gamma p-\frac{n+2}{2}}\, dx \, dt\\
  &=C(n)\int^{T}_{t_0}(T-t)^{\gamma p-1}dt=C(n)\int^{T-t_0}_{0}s^{\gamma p-1}ds
 \end{align*}
 we see that \eqref{7.13} and \eqref{7.14} hold.
 
 Let $r=\frac{n+2}{2p}-\gamma-\alpha$.  Then for $(x,t)\in\Omega$ we have
 \begin{align*}
  J_\alpha
   f(x,t)&=\int^{t}_{t_0}(T-\tau)^{-r-\alpha}\int_{|\xi|<\sqrt{T-\tau}}\Phi_\alpha
           (x-\xi,t-\tau)\, d\xi \, d\tau\\
  &=C\int^{t}_{t_0}(T-\tau)^{-r-\alpha}(t-\tau)^{\alpha-1}\Biggl(\int_{|\xi|<\sqrt{T-\tau}}\Phi_1 (x-\xi,t-\tau)\,d\xi\Biggr)d\tau\\
  &\geq C\int^{t}_{t_0}(T-\tau)^{-r-\alpha}(t-\tau)^{\alpha-1}d\tau,
\quad\text{ by Lemma \ref{lem7.5},}\\
  &=C(T-t)^{-r}g\biggl(\frac{t-t_0}{T-t}\biggr)
 \end{align*}
 where $g(z)=\int^{z}_{0}(\zeta+1)^{-r-\alpha}\zeta^{\alpha-1}d\zeta$ and where we made the change of variables $t-\tau=(T-t)\zeta$.  Thus
 $$J_\alpha f(x,t)\geq C(T-t)^{-r}\quad\text{for }(x,t)\in\Omega^+$$
 because $\frac{t-t_0}{T-t}>1$ in $\Omega^+$.
\end{proof}

\section{Proofs of results for $J_\alpha$ problems}\label{sec8}
In this section we prove our results stated in Section \ref{sec4}
concerning pointwise bounds for nonnegative solutions $f$ of
\eqref{4.4}--\eqref{4.7}. As explained in Section \ref{sec4}, these
results immediately imply Theorems \ref{thm3.1}--\ref{thm3.6} in
Section \ref{sec3}.

\begin{rem}\label{rem8.1}
 The function $g:\mathbb{R}^n \times\mathbb{R}\to[0,\infty)$ defined by
 $$g(x,t)=g(t)=
 \begin{cases}
  (Mt^\alpha )^{\frac{\lambda}{1-\lambda}} & \text{for }t>0\\
  0 & \text{for }t\leq0,
 \end{cases}$$
 where $\alpha>0$, $0<\lambda<1$, and $M=M(\alpha,\lambda)$ 
is defined in \eqref{4.11}, satisfies
 \begin{equation}\label{8.1}
  g=(J_\alpha g)^\lambda \quad\text{in } \mathbb{R}^n \times\mathbb{R}
 \end{equation}
 which can be verified using \eqref{5.3}.  Even though $g\notin X^p$
 for all $p\geq1$, it will be useful in our analysis of solutions of
 \eqref{4.5}, \eqref{4.6} which are in $X^p$ for some $p\geq1$.
\end{rem}

\begin{rem}\label{rem8.2}
 It will be convenient to scale \eqref{4.5} as follows.  Suppose $
 K,\lambda,\alpha,T\in(0,\infty)$, $\lambda\neq1$, and $f,\bar f:\mathbb{R}^n \times\mathbb{R}\to\mathbb{R}$ are nonnegative measurable functions such that $f=\bar f=0$ in 
 $\mathbb{R}^n \times(-\infty,0)$ and
 $$f(x,t)= K^{\frac{1}{1-\lambda}}T^{\frac{\alpha\lambda}{1-\lambda}}\bar f(\bar x,\bar t)$$
 where
 $$x=T^{1/2}\bar x \quad\text{ and }\quad  t=T\bar t.$$
 Then $f$ satisfies
 $$0\leq f\leq K(J_\alpha f)^\lambda \quad\text{in } \mathbb{R}^n \times\mathbb{R}$$
 if and only if $\bar f$ satisfies
 $$0\leq\bar f\leq(J_\alpha \bar f)^\lambda \quad\text{in } \mathbb{R}^n \times\mathbb{R}.$$
 Moreover
 $$\frac{f(x,t)}{ K^{\frac{1}{1-\lambda}}t^{\frac{\alpha\lambda}{1-\lambda}}}=\frac{\bar f(\bar x,\bar t)}{\bar t^{\frac{\alpha\lambda}{1-\lambda}}}\quad\text{for }(x,t)\in\mathbb{R}^n \times(0,\infty)$$
 and
 $$\frac{J_\alpha f(x,t)}{ K^{\frac{1}{1-\lambda}}t^{\frac{\alpha}{1-\lambda}}}=\frac{J_\alpha \bar f(\bar x,\bar t)}{\bar t^{\frac{\alpha}{1-\lambda}}}\quad\text{for }(x,t)\in\mathbb{R}^n \times(0,\infty).$$
\end{rem}
\medskip

\begin{proof}[Proof of Theorem \ref{thm4.1}]
  Suppose for contradiction that \eqref{4.8} is false.  Then there
  exists $T>0$ such that
\[
\|f\|_{L^\infty(\mathbb{R}^n \times\mathbb{R}_T)}>0.
\]
Hence by \eqref{4.6} there exists $t_0\in[0,T)$ such that
\[
\| f\|_{L^\infty (\mathbb{R}^n \times\mathbb{R}_t )}
\begin{cases}
 =0 & \text{for }t\leq t_0\\
 >0 & \text{for }t>t_0 .
\end{cases}
\]
Thus by Remark \ref{rem7.1}, we have for all $b>t_0$ that 
$$J_\alpha f=V_{\alpha, \Omega_b}f\quad\text{in } \Omega_b$$
where $\Omega_b =\mathbb{R}^n \times (t_0 ,b)$ and $V_{\alpha,\Omega}$ is defined by \eqref{7.1}.  Also, by Lemma \ref{lem7.3},
$$\| f\|_{L^\infty (\Omega_b)}\leq\| f\|_{L^\infty (\Omega_T )}<\infty\quad\text{for }t_0 <b<T.$$
It follows therefore from \eqref{4.5} and Lemma \ref{lem7.1} that for $t_0 <b<T$ we have
$$ 0<K^{-1}\leq\frac{\| V_{\alpha, \Omega_b}f\|^{\lambda}_{L^\infty (\Omega_b)}}{\| f\|_{L^\infty (\Omega_b )}}\leq\Biggl(\frac{(b-t_0 )^\alpha}{\Gamma(\alpha+1)}\Biggr)^\lambda \| f\|^{\lambda-1}_{L^\infty (\Omega_b )}\to0\quad\text{ as }b\to t^{+}_{0}$$
because $\lambda\geq1$.  This contradiction proves Theorem
\ref{thm4.1}. 
\end{proof}

\begin{proof}[Proof of Theorem \ref{thm4.2}]
 By Remark \ref{rem8.2} with $T=1$ we can assume $ K=1$.  For $b>0$ we have by Lemma \ref{lem7.3} that 
$$f\in L^\infty (\mathbb{R}^n \times\mathbb{R}_b )$$
and by \eqref{4.5}, \eqref{4.6}, Remark \ref{rem7.1} with $a=0$, and 
Lemma \ref{lem7.1} that
$$\| f\|_{L^\infty (\Omega_b )}\leq\| J_\alpha f\|^{\lambda}_{L^\infty (\Omega_b )}\leq\Biggl(\frac{b^\alpha}{\Gamma(\alpha+1)}\| f\|_{L^\infty (\Omega_b )}\Biggr)^\lambda$$
where $\Omega_b =\mathbb{R}^n \times(0,b)$.  Thus, since $0<\lambda<1$, we see that
\begin{equation}\label{8.2}
 \| f\|_{L^\infty (\Omega_b
   )}\leq\Biggl(\frac{b^\alpha}{\Gamma(\alpha+1)}\Biggr)^{\frac{\lambda}{1-\lambda}}
\quad\text{for all }b>0.
\end{equation}
Define $\{\gamma_j \}\subset(0,\infty)$ by $\gamma_1 =1$ and
\begin{equation}\label{8.3}
 \gamma_{j+1}=(\bar M\gamma_j )^\lambda ,\,j=1,2,...,\quad\text{ where }\bar M=\Gamma(\alpha+1)M.
\end{equation}
Then, since $0<\lambda<1$, we see that
\begin{equation}\label{8.4}
 \gamma_j \to\bar M^{\frac{\lambda}{1-\lambda}}\quad\text{ as }j\to\infty.
\end{equation}
Suppose for some positive integer $j$ that
\begin{equation}\label{8.5}
 \| f\|_{L^\infty (\Omega_b )}\leq\gamma_j
 \Biggl(\frac{b^\alpha}{\Gamma(\alpha+1)}\Biggr)^{\frac{\lambda}{1-\lambda}}\quad\text{for
   all }b>0.
\end{equation}
Then for $b>0$ and $(x,t)\in\Omega_b$ we find from \eqref{4.5} and \eqref{5.3} that
\begin{align}\label{8.6}
 \notag f(x,t)&\leq(J_\alpha f(x,t))^\lambda \\
 \notag &\leq\Biggl(\int^{t}_{0}\frac{(t-\tau)^{\alpha-1}}{\Gamma(\alpha)}\Biggl(\int_{\xi\in\mathbb{R}^n}\Phi_1(x-\xi,t-\tau)\,d\xi\Biggr)\| f\|_{L^\infty (\Omega_\tau )}d\tau\Biggr)^\lambda \\
 \notag &\leq\Biggl(\int^{t}_{0}\frac{(t-\tau)^{\alpha-1}}{\Gamma(\alpha)}\gamma_j \Biggl(\frac{\tau^\alpha}{\Gamma(\alpha+1)}\Biggr)^{\frac{\lambda}{1-\lambda}}d\tau\Biggr)^\lambda \\
 \notag &=\Biggl(\gamma_j \frac{1}{\Gamma(\alpha)\Gamma(\alpha+1)^{\frac{\lambda}{1-\lambda}}}\int^{t}_{0}(t-\tau)^{\alpha-1}\tau^{\frac{\alpha\lambda}{1-\lambda}}d\tau\Biggr)^\lambda \\
 \notag &=\Biggl(\gamma_j \frac{\Gamma(\alpha)\Gamma(\frac{\alpha\lambda}{1-\lambda}+1)t^{\alpha+\frac{\alpha\lambda}{1-\lambda}}}{\Gamma(\alpha)\Gamma(\alpha+1)^{\frac{\lambda}{1-\lambda}}\Gamma(\alpha+\frac{\alpha\lambda}{1-\lambda}+1)}\Biggr)^\lambda \\
 \notag &=\Biggl(\gamma_j \frac{Mt^{\frac{\alpha}{1-\lambda}}}{\Gamma(\alpha+1)^{\frac{\lambda}{1-\lambda}}}\Biggr)^\lambda =\Biggl(\gamma_j \frac{\bar Mt^{\frac{\alpha}{1-\lambda}}}{\Gamma(\alpha+1)^{\frac{1}{1-\lambda}}}\Biggr)^\lambda \\
 &=\gamma_{j+1}\Biggl(\frac{t^\alpha}{\Gamma(\alpha+1)}\Biggr)^{\frac{\lambda}{1-\lambda}}.
\end{align}
Thus
$$\| f\|_{L^\infty (\Omega_b
  )}\leq\gamma_{j+1}\Biggl(\frac{b^\alpha}{\Gamma(\alpha+1)}\Biggr)^{\frac{\lambda}{1-\lambda}}\quad\text{for all }b>0.$$
Hence \eqref{4.9} follows inductively from \eqref{8.2}--\eqref{8.5}.

Finally, repeating the calculation \eqref{8.6} with $\gamma_j =\gamma_{j+1}=\bar M^{\frac{\lambda}{1-\lambda}}$ we get
$$(J_\alpha f(x,t))^\lambda \leq\bar M^{\frac{\lambda}{1-\lambda}}\Biggl(\frac{t^\alpha}{\Gamma(\alpha+1)}\Biggr)^{\frac{\lambda}{1-\lambda}}\quad\text{for }(x,t)\in\Omega_b$$
which proves \eqref{4.10}. 
\end{proof}

\begin{proof}[Proof of Theorem \ref{thm4.3}]
  By Remark \ref{rem8.2} we can assume $ K=T=1$.  For $(x,t)\in\mathbb{R}^n \times\mathbb{R}$ and $\delta\in(0,1)$ let 
\begin{equation}\label{8.7}
 g_\delta (x,t)=g_\delta (t)=\psi_\delta (t)g(t)
\end{equation}
where $g$ is as in Remark \ref{rem8.1} and $\psi_\delta \in C^\infty (\mathbb{R}\to[0,1])$ satisfies
$$\psi_\delta (t)=
\begin{cases}
 1 & \text{if }t\leq1\\
 0 & \text{if }t\geq1+\delta.
\end{cases}$$
Then for $1\leq t\leq1+\delta$
\begin{align*}
 J_\alpha g(t)-J_\alpha g_\delta (t)&=\int^{t}_{1}\frac{(t-\tau)^{\alpha-1}}{\Gamma(\alpha)}g(\tau)(1-\psi_\delta (\tau))\,d\tau\\
 &\leq\int^{t}_{1}\frac{(t-\tau)^{\alpha-1}}{\Gamma(\alpha)}g(\tau)\,d\tau\leq g(1+\delta)\int^{t}_{1}\frac{(t-\tau)^{\alpha-1}}{\Gamma(\alpha)}\,d\tau\\
 &=g(1+\delta)\frac{(t-1)^\alpha}{\Gamma(\alpha+1)}\leq g(2)\frac{\delta^\alpha}{\Gamma(\alpha+1)}
\end{align*}
and thus by \eqref{8.1} we have for $1\leq t\leq1+\delta$ that
\begin{align*}
 \frac{J_\alpha g_\delta (t)}{J_\alpha g(t)}&=\frac{J_\alpha g(t)-(J_\alpha g(t)-J_\alpha g_\delta (t))}{g(t)^{1/\lambda}}\\
 &\geq1-\frac{g(2)\delta^\alpha}{\Gamma(\alpha+1)g(1)^{1/\lambda}}\\
 &=1-C(\alpha,\lambda)\delta^\alpha \geq\sqrt{\frac{N}{M}}
\end{align*}
provided we choose $\delta=\delta(\alpha,\lambda,N)\in(0,1)$ sufficiently small.  Hence for $1\leq t\leq1+\delta$ we see from \eqref{8.1} that
\begin{equation}\label{8.8}
 g_\delta (t)\leq g(t)=(J_\alpha g(t))^\lambda \leq\left(\frac{M}{N}\right)^{\lambda/2}(J_\alpha g_\delta (t))^\lambda 
\end{equation}
which by \eqref{8.7} and \eqref{8.1} holds for all other $t$ as well.

Next let $\varphi(x)=e^{-\psi(x)}$ where $\psi(x)=\sqrt{1+|x|^2}-1$.  Then for $\varepsilon\in(0,1),\,\gamma>1$, and $|\xi-x|<\gamma\sqrt{2}$ we have
$$\frac{\varphi(\varepsilon\xi)}{\varphi(\varepsilon x)}=e^{-(\psi(\varepsilon\xi)-\psi(\varepsilon x))}\geq e^{-\varepsilon|\xi-x|}\geq e^{-\varepsilon\gamma\sqrt{2}}.$$
Thus defining $f_\varepsilon :\mathbb{R}^n \times\mathbb{R}\to[0,\infty)$ by
$$f_\varepsilon (x,t)=\varphi(\varepsilon x)\left(\frac{N}{M}\right)^{\frac{\lambda}{1-\lambda}}g_\delta (t)$$
we find for $|\xi-x|<\gamma\sqrt{2}$ and $\tau\in\mathbb{R}$ that
$$f_\varepsilon (\xi,\tau)\geq\varphi(\varepsilon x)e^{-\varepsilon\gamma\sqrt{2}}\left(\frac{N}{M}\right)^{\frac{\lambda}{1-\lambda}}g_\delta (\tau).$$
Thus for $(x,t)\in\mathbb{R}^n \times(0,2)$ we have
\begin{equation}\label{8.9}
 J_\alpha f_\varepsilon (x,t)\geq\varphi(\varepsilon x)e^{-\varepsilon\gamma\sqrt{2}}\left(\frac{N}{M}\right)^{\frac{\lambda}{1-\lambda}}\int^{t}_{0}\frac{(t-\tau)^{\alpha-1}}{\Gamma(\alpha)}g_\delta (\tau)\int_{|\xi-x|<\gamma\sqrt{2}}\Phi_1 (x-\xi,t-\tau)\, d\xi \, d\tau.
\end{equation}
But for $x,\xi\in\mathbb{R}^n$ and $0<\tau<t<2$ we find making the change of variables $z=\frac{x-\xi}{\sqrt{4(t-\tau)}}$ that
\begin{align*}
 \int_{|\xi-x|<\gamma\sqrt{2}}\Phi_1(x-\xi,t-\tau)\,d\xi&\geq\int_{|\xi-x|<\gamma\sqrt{t-\tau}}\frac{1}{(4\pi(t-\tau))^{n/2}}e^{-\frac{|x-\xi|^2}{4(t-\tau)}}d\xi\\
 &=\frac{1}{\pi^{n/2}}\int_{|z|<\gamma/2}e^{-|z|^2}dz=:I(\gamma)\to1
\end{align*}
as $\gamma\to\infty$.  Thus by \eqref{8.9} and \eqref{8.8} we have for $(x,t)\in\mathbb{R}^n \times(0,1+\delta)$ that
\begin{align}\label{8.10}
 \notag \frac{(J_\alpha f_\varepsilon (x,t))^\lambda}{f_\varepsilon (x,t)}&\geq\frac{\varphi(\varepsilon x)^\lambda e^{-\varepsilon\gamma\lambda\sqrt{2}}(\frac{N}{M})^{\frac{\lambda^2}{1-\lambda}}I(\gamma)^\lambda (J_\alpha g_\delta (t))^\lambda}{\varphi(\varepsilon x)(\frac{N}{M})^{\frac{\lambda}{1-\lambda}}g_\delta (t)}\\
 &\geq\left(\frac{M}{N}\right)^{\lambda/2}I(\gamma)^\lambda e^{-\varepsilon\gamma\lambda\sqrt{2}}.
\end{align}
So first choosing $\gamma$ so large that
$(\frac{M}{N})^{\lambda/2}I(\gamma)^\lambda >1$ and then choosing
$\varepsilon>0$ so small that \eqref{8.10} is greater than 1  we see
that $f:=f_\varepsilon$ satisfies\eqref{4.5} in
$\mathbb{R}^n \times(0,1+\delta)$.  Thus, since $g_\delta (t)$ and
hence $f(x,t)$ is identically zero in
$\mathbb{R}^n \times((-\infty,0]\cup[1+\delta,\infty))$ see that $f$
satisfies \eqref{4.5}, \eqref{4.6}.

From the exponential decay of $\varphi(x)$ as $|x|\to\infty$, we see that $f$ satisfies \eqref{4.12}.  Also since $f$ is uniformly continuous and bounded on $\mathbb{R}^n \times\mathbb{R}$ and 
$$\int^{b}_{a}\int_{\mathbb{R}^n}\Phi_\alpha (x,t)\, dx \, dt=\frac{1}{\Gamma(\alpha+1)}(b^\alpha -a^\alpha )\quad\text{for }a<b,$$
we easily check that \eqref{4.12.5} holds.

Finally, since
\[f(0,t)=\left(\frac{N}{M}\right)^{\frac{\lambda}{1-\lambda}}g(t)\quad\text{for }0\leq t\leq1\]
we find that \eqref{4.13} holds and thus \eqref{4.14} follows from
\eqref{4.5}. 
\end{proof}

\begin{proof}[Proof of Theorem \ref{thm4.4}]
  By Remark \ref{rem8.2} with $T=1$ we can assume $ K=1$.  Define $\bar f:\mathbb{R}^n \times\mathbb{R}\to[0,\infty)$ by
\begin{equation}\label{8.11}
 \bar f(x,t)=g(t)\raisebox{2pt}{$\chi$}_{\{|x|^2 <t\}}(x,t)
\end{equation}
where $g$ is defined in Remark \ref{rem8.1}.  Then for $(x,t)\in\mathbb{R}^n \times(0,\infty)$ we have
$$J_\alpha \bar f(x,t)=\int^{t}_{0}\frac{(t-\tau)^{\alpha-1}}{\Gamma(\alpha)}\Biggl(\int_{|\xi|^2 <\tau}\Phi_1 (x-\xi,t-\tau)\,d\xi\Biggr)g(\tau)\,d\tau.$$
Thus by Lemma \ref{lem7.4} we see for $|x|^2 <t$ that
\begin{align}\label{8.12}
 \notag J_\alpha \bar f(x,t)&\geq C(n,\alpha,\lambda)\int^{3t/4}_{t/4}(t-\tau)^{\alpha-1}\tau^{\frac{\alpha\lambda}{1-\lambda}}d\tau\\
 \notag &=C(n,\alpha,\lambda)t^{\frac{\alpha}{1-\lambda}}\\
 \notag &=C(n,\alpha,\lambda)g(x,t)^{1/\lambda}\\
 &=C(n,\alpha,\lambda)\bar f(x,t)^{1/\lambda}
\end{align}
which also holds in $(\mathbb{R}^n \times\mathbb{R})\backslash\{|x|^2\le t\}$ 
because $\bar f=0$ there.  Thus letting $f=L\bar f$ where
$$L=C^{\frac{\lambda}{1-\lambda}}$$
where $C=C(n,\alpha,\lambda)$ is as in \eqref{8.12} we find that $f$ satisfies \eqref{4.4}--\eqref{4.6}.

It follows from \eqref{8.11} and the definitions of $g$ and $f$ that
there exists $N>0$ such that \eqref{4.15} holds.  Thus, since $f$
solves \eqref{4.5} we obtain \eqref{4.16}. 
\end{proof}

\begin{proof}[Proof of Theorem \ref{thm4.5}]
 Since $|R_j|<\infty$, to prove Theorem \ref{thm4.5} it suffices to
 show for each $\varepsilon\in(0,1)$ that the conclusion of Theorem
 \ref{thm4.5} holds for some
\begin{equation}\label{one}
q\in(p,p+\varepsilon).
\end{equation}
So let $\varepsilon\in(0,1)$. By \eqref{4.17}$_1$, there exists $q$
satisfying \eqref{one} such that
\begin{equation}\label{two}
\alpha<\frac{n+2}{2q}\left(1-\frac{1}{\lambda}\right).
\end{equation}
Define $f_0 :\mathbb{R}^n \times\mathbb{R}\to\mathbb{R}$ by
\begin{equation}\label{8.15}
 f_0 (x,t)=\left(\frac{1}{t}\right)^{r}\raisebox{2pt}{$\chi$}_{\Omega_0}(x,t)
\end{equation}
where
\[\Omega_0 =\{(x,t)\in\mathbb{R}^n \times\mathbb{R}:|x|^2 <t<1\}\]
and
\begin{equation}\label{four}
r:=\frac{n+2}{2q}<\frac{n+2}{2p}
\end{equation}
by \eqref{one}.
Then by \eqref{four} and Lemma \ref{lem7.6} we have
\begin{equation}\label{8.16}
 f_0 \in L^p (\mathbb{R}^n \times\mathbb{R})
\end{equation}
and
\begin{equation}\label{8.17}
 J_\alpha f_0 (x,t)\geq C\left(\frac{1}{t}\right)^{r-\alpha}\quad\text{for }(x,t)\in\Omega_0
\end{equation}
where, throughout this entire proof, $C=C(n,\lambda,\alpha,p,q)$ is a
positive constant whose value may change from line to line.

Let $\{T_j \}\subset(0,1/2)$ be a sequence such that
$$T_{j+1}<T_j /4\qquad j=1,2,...$$
and define 
\begin{equation}\label{seven}
 t_j=T_j/2.
\end{equation}
Then
\begin{equation}\label{8.20}
 \Omega_j :=\{(y,s)\in\mathbb{R}^n \times\mathbb{R}:|y|<\sqrt{T_j -s} \text{ and }  t_j <s<T_j \}\subset R_j \subset\Omega_0
\end{equation}
and thus defining
$f_j :\mathbb{R}^n \times\mathbb{R}\to\mathbb{R}$ by 
\begin{equation}\label{nine}
f_j (x,t)=(T_j -t)^{-r}\raisebox{2pt}{$\chi$}_{\Omega_j}(x,t)
\end{equation}
we obtain from \eqref{four} and Lemma \ref{lem7.7} that
\begin{equation}\label{8.21}
 \| f_j \|^{p}_{L^p (\mathbb{R}^n \times\mathbb{R})}
=C(n)\int^{T_j -t_j}_{0}s^{(\frac{n+2}{2p}-r)p-1}ds\
 \to0\quad\text{as }j\to\infty,
\end{equation}
\begin{equation}\label{8.22}
 \| f_j \|_{L^q (R_j )}=\| f_j \|_{L^q (\mathbb{R}^n\times\mathbb{R})}
=\infty\quad\text{for }j=1,2,...,
\end{equation}
and
\begin{equation}\label{8.23}
 J_\alpha f_j (x,t)\geq C\left(\frac{1}{(T_j -t)}\right)^{r-\alpha}
\quad\text{for }(x,t)\in\Omega^{+}_{j}
\end{equation}
where
\[
\Omega^{+}_{j}=\{(x,t)\in\Omega_j :\frac{3T_j}{4}<t<T_j\}.
\]

It follows from \eqref{8.15} and \eqref{8.17} that 
\[
 \frac{f_0 (x,t)}{(J_\alpha f_0 (x,t))^\lambda}\leq
  Ct^{(r-\alpha)\lambda-r} \quad\text{for }(x,t)\in\Omega_0
\]
and from \eqref{two} and \eqref{four} that the exponent
\begin{equation}\label{8.22.5}
(r-\alpha)\lambda-r=\lambda[r(1-1/\lambda)-\alpha]>0.
\end{equation}
Thus
\begin{equation}\label{8.24}
 \sup_{\Omega_0}\frac{f_0}{(J_\alpha f_0)^\lambda} \leq C
\end{equation}
and by \eqref{8.20}
\begin{equation}\label{8.25}
 \sup_{\Omega_j}\frac{f_0}{(J_\alpha f_0)^\lambda}\leq CT_j^{(r-\alpha)\lambda-r}<1
\end{equation}
by taking a subsequence.

By \eqref{nine}, \eqref{8.23}, and  \eqref{8.22.5} we have
\begin{align}\label{8.26}
 \notag \sup_{\Omega^{+}_{j}}\frac{f_j}{(J_\alpha f_j )^\lambda}&\leq C\sup_{(x,t)\in\Omega^{+}_{j}}(T_j -t)^{(r-\alpha)\lambda-r}\\
 &\leq C(T_j -t_j )^{(r-\alpha)\lambda-r}<1
\end{align}
by taking a subsequence.

It follows from \eqref{8.15}, \eqref{nine}, \eqref{8.20}, and
\eqref{seven} that
\begin{equation}\label{8.27}
 \sup_{\Omega_j}\frac{f_0}{f_j}=\sup_{(x,t)\in\Omega_j}
\frac{(T_j -t)^{r}}{t^{r}}\leq\frac{(T_j -t_j )^{r}}{t^{r}_{j}}=1
\end{equation}
and letting $\Omega^{-}_{j}=\Omega_j \backslash\Omega^{+}_{j}$ we see
from \eqref{nine}, \eqref{8.17}, \eqref{8.20}, and \eqref{8.22.5} that
\begin{align}\label{8.28}
 \notag \sup_{\Omega^{-}_{j}}\frac{f_j}{(J_\alpha f_0 )^\lambda}
&\leq
  C\sup_{(x,t)\in\Omega^{-}_{j}}\frac{t^{(r-\alpha)\lambda}}{(T_j
  -t)^{r}}
\leq C\frac{T^{(r-\alpha)\lambda}_{j}}{(T_j/4)^{r}}\\
 &=CT^{(r-\alpha)\lambda-r}_{j}<\frac{1}{2}
\end{align}
by taking a subsequence.

Taking an appropriate subsequence of $f_j$ and letting
$$f=f_0 +\sum^{\infty}_{j=1}f_j$$
we find from \eqref{8.16} and \eqref{8.21} that $f$ satsfies \eqref{4.18}.

In $\Omega^{+}_{j}$ we have by \eqref{8.25} and \eqref{8.26} that
\begin{align*}
 f&=f_0 +f_j \leq(J_\alpha f_0 )^\lambda +(J_\alpha f_j )^\lambda \\
 &\leq(J_\alpha (f_0 +f_j ))^\lambda \leq(J_\alpha f)^\lambda .
\end{align*}
In $\Omega^{-}_{j}$ we have by \eqref{8.27} and \eqref{8.28} that 
$$f=f_0 +f_j \leq2f_j \leq(J_\alpha f_0 )^\lambda \leq(J_\alpha f)^\lambda .$$
In $\Omega_0 \backslash\cup^{\infty}_{j=1}\Omega_j$ we have by \eqref{8.24} that
$$f=f_0 \leq C(J_\alpha f_0 )^\lambda \leq C(J_\alpha f)^\lambda.$$
In $(\mathbb{R}^n \times\mathbb{R})\backslash\Omega_0
,\,f=0\leq(J_\alpha f)^\lambda$.  Thus, after scaling $f$, we see that
$f$ is a solution of \eqref{4.5}, \eqref{4.6}.  Also \eqref{4.19} holds
by \eqref{8.22}. 
\end{proof}

\begin{proof}[Proof of Theorem \ref{thm4.6}]
  By \eqref{4.20}$_1$, there exists a unique number $\gamma\in(0,\frac{n+2}{2p}-\alpha)$ such that
\begin{equation}\label{8.29}
 \lambda=\frac{\frac{n+2}{2p}-\gamma}{\frac{n+2}{2p}-\alpha-\gamma}.
\end{equation}
Let $f_0$ and $\Omega_0$ be as in Lemma \ref{lem7.6}.  Then by \eqref{8.29} and Lemma \ref{lem7.6} we have
\begin{equation}\label{8.30}
 f_0 \in X^p
\end{equation}
and
\begin{equation}\label{8.31}
 f_0 \leq C(J_\alpha f_0 )^\lambda \quad\text{in } \mathbb{R}^n \times\mathbb{R}
\end{equation}
where in this proof $C=C(n,\lambda,\alpha,p)$ is a positive constant whose value may change from line to line.  Let $\{T_j \},\,\{t_j \}\subset(2,\infty)$ satisfy
$$T_{j+1}\geq4T_j  \quad\text{ and }\quad T_j =2t_j$$
and define $f_j :\mathbb{R}^n \times\mathbb{R}\to\mathbb{R}$ by
\begin{equation}\label{8.32}
 f_j (x,t)=\Biggl(\frac{1}{T_j -t}\Biggr)^{\frac{n+2}{2p}-\gamma}\raisebox{2pt}{$\chi$}_{\Omega_j}(x,t)
\end{equation}
where
$$\Omega_j :=\{(x,t)\in\mathbb{R}^n \times(T_j /2,T_j ):|x|<\sqrt{T_j -t}\}.$$
Then
\begin{equation}\label{8.33}
 \Omega_j \subset R_j \subset\Omega_0 ,
\quad\Omega_j \cap\Omega_k =\emptyset\quad\text{for }j\neq k,
\end{equation}
\begin{equation}\label{8.34}
 \text{inf }\{t:(x,t)\in\Omega_j \}=T_j /2\to\infty\quad\text{as }j\to\infty,
\end{equation}
and by \eqref{8.32}, \eqref{8.29}, and Lemma \ref{lem7.7} we have
\begin{equation}\label{8.35}
 f_j \in L^p (\mathbb{R}^n \times\mathbb{R})
\end{equation}
and
$$f_j \leq C(J_\alpha f_j )^\lambda \quad\text{in } \Omega^{+}_{j}$$
where
$$\Omega^{+}_{j}=\{(x,t)\in\Omega_j:\frac{3T_j}{4}<t<T_j \}.$$
It follows therefore from \eqref{8.31} that
\begin{equation}\label{8.36}
 f_0 +f_j \leq C((J_\alpha f_0 )^\lambda +(J_\alpha f_j )^\lambda )\leq C(J_\alpha (f_0 +f_j ))^\lambda \quad\text{in } \Omega^{+}_{j}.
\end{equation}
In $\Omega^{-}_{j}:=\Omega_j \backslash\Omega^{+}_{j}$ we have
$$\frac{f_j}{f_0}=\Biggl(\frac{t}{(T_j -t)}\Biggr)^{\frac{n+2}{2p}-\gamma}\leq\Biggl(\frac{\frac{3}{4}T_j}{\frac{1}{4}T_j}\Biggr)^{\frac{n+2}{2p}-\gamma}=3^{\frac{n+2}{2p}-\gamma}$$
and thus we obtain from \eqref{8.31} that
\begin{equation}\label{8.37}
 f_0 +f_j \leq Cf_0 \leq C(J_\alpha f_0 )^\lambda \leq C(J_0 (f_0 +f_j ))^\lambda \quad\text{in } \Omega^{-}_{j}.
\end{equation}
Let $f=f_0 +\sum^{\infty}_{j=1}f_j$.  Then clearly $f$ satisfies \eqref{4.6} and by \eqref{8.30}, \eqref{8.35}, and \eqref{8.34} we see that $f$ satisfies \eqref{4.21}.

In $\Omega_j$ we have by \eqref{8.33}$_2$, \eqref{8.36}, and \eqref{8.37} that
$$f=f_0 +f_j \leq C(J_\alpha (f_0 +f_j ))^\lambda \leq C(J_\alpha f)^\lambda$$
and in $(\mathbb{R}^n \times\mathbb{R})\backslash\cup^{\infty}_{j=1}\Omega_j$ we have by \eqref{8.31} that
$$f=f_0 \leq C(J_\alpha f_0 )^\lambda \leq C(J_\alpha f)^\lambda .$$
Thus after scaling $f$, we find that $f$ satisfies \eqref{4.5}.

Since $|R_j |<\infty$, we can for the proof of \eqref{4.22} assume instead of \eqref{4.20}$_2$ that
$$q=\frac{n+2}{2\alpha}(1-\frac{1}{\lambda})$$
and hence by \eqref{8.29} we get
$$\frac{n+2}{2p}-\gamma=\frac{\alpha}{1-\frac{1}{\lambda}}=\frac{n+2}{2q}.$$
Consequently from \eqref{8.33}$_1$, \eqref{8.32}, and Lemma \ref{lem7.7} we find that
$$\| f\|_{L^q (R_j )}\geq\| f_j \|_{L^q (\Omega_j )}=\infty \quad\text{for }j=1,2,...$$
which proves \eqref{4.22} 
\end{proof}

\appendix
\section{Appendix}
For the proof of Theorem \ref{thm2.3}(ii) we will need the following
result due to Nogin and Rubin \cite{NR} concerning the inversion of the
operator $J_\alpha$ in the framework of the spaces
$L^p (\mathbb{R}^n \times\mathbb{R})$.  See also \cite[Theorem 9.24]{SK}.

\begin{thm}\label{thmA.1}
 Suppose $0<\alpha<\frac{n+2}{2p},\,1<p<\infty$, and $u=J_\alpha f$ with $f\in L^p (\mathbb{R}^n \times\mathbb{R})$.  Then
 $$\lim_{\varepsilon\to 0^{+}}J^{-\alpha}_{\varepsilon}u=f\quad\text{in } L^p (\mathbb{R}^n \times\mathbb{R})$$
 where
 \begin{equation}\label{A.1}
  J^{-\alpha}_{\varepsilon}u(x,t)=C(n,\alpha,l)\iint_{\mathbb{R}^n \times(\varepsilon,\infty)}\frac{(\Delta^{l}_{y,\tau}u)(x,t)}{\tau^{1+\alpha}}e^{-\frac{|y|^2}{4}}\, dy \, d\tau
 \end{equation}
 and
 \begin{equation}\label{A.2}
  (\Delta^{l}_{y,\tau}u)(x,t)
=\sum^{l}_{k=0}(-1)^k \binom{l}{k}u(x-y\sqrt{k\tau},t-k\tau),\quad l>\alpha.
 \end{equation}
\end{thm}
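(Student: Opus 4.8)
The plan is to reduce the statement to a Fourier-multiplier computation—exploiting that both $J_\alpha$ and the truncated hypersingular operator $J^{-\alpha}_\varepsilon$ act diagonally on the Fourier side—and then to upgrade the resulting pointwise convergence of symbols to convergence in $L^p(\mathbb{R}^n\times\mathbb{R})$. I write $(\eta,\sigma)$ for the variables dual to $(x,t)$, to avoid clashing with the integration variables $y,\tau$ appearing in \eqref{A.1}, \eqref{A.2}.

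First I would note that each summand of the finite difference \eqref{A.2} is a parabolic translate of $u$, namely $u$ shifted by $(y\sqrt{k\tau},k\tau)$, and that translations commute with the convolution $u=\Phi_\alpha*f$. Taking the Fourier transform in $(x,t)$ and carrying out the Gaussian integration in $y$ via $\int_{\mathbb{R}^n}e^{i\eta\cdot y\sqrt{k\tau}}e^{-|y|^2/4}\,dy=(4\pi)^{n/2}e^{-|\eta|^2k\tau}$, the time-shift factor $e^{i\sigma k\tau}$ combines with this Gaussian factor to produce $e^{-(|\eta|^2-i\sigma)k\tau}$. Summing the binomial series then collapses the finite difference, and with $z:=|\eta|^2-i\sigma$ one obtains
\[
\widehat{J^{-\alpha}_\varepsilon u}(\eta,\sigma)
=C(n,\alpha,l)(4\pi)^{n/2}\,\widehat u(\eta,\sigma)\int_\varepsilon^\infty\frac{(1-e^{-z\tau})^l}{\tau^{1+\alpha}}\,d\tau .
\]
This identity is made rigorous by pairing against Schwartz functions and invoking the multiplier identity of Theorem \ref{thm2.1}.

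Next I would let $\varepsilon\to0^+$. The substitution $\tau=s/z$ turns the $\tau$-integral into $z^\alpha\int$ along the ray through $z$, which by contour rotation (valid since $\operatorname{Re}z=|\eta|^2>0$, the null set $\eta=0$ being irrelevant for the $L^p$ statement) equals $z^\alpha\int_0^\infty(1-e^{-s})^l s^{-1-\alpha}\,ds$. This last integral converges precisely because $0<\alpha<l$: the behaviour $(1-e^{-s})^l\sim s^l$ near $0$ forces $l>\alpha$, while the decay $s^{-1-\alpha}$ at infinity forces $\alpha>0$. The normalizing constant $C(n,\alpha,l)$ is chosen exactly so that $C(n,\alpha,l)(4\pi)^{n/2}\int_0^\infty(1-e^{-s})^l s^{-1-\alpha}\,ds=1$, and hence the multiplier converges pointwise to $z^\alpha$. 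Since $\widehat u=z^{-\alpha}\widehat f$ by Theorem \ref{thm2.1}(ii), the limiting symbol is exactly $\widehat f$, so $J^{-\alpha}_\varepsilon u\to f$ at the level of Fourier transforms.

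Finally I would promote this pointwise convergence of symbols to convergence in $L^p$. Writing $J^{-\alpha}_\varepsilon u=K_\varepsilon*f$, where $\widehat{K_\varepsilon}(\eta,\sigma)=C(n,\alpha,l)(4\pi)^{n/2}z^{-\alpha}\int_\varepsilon^\infty(1-e^{-z\tau})^l\tau^{-1-\alpha}\,d\tau\to1$, the goal is $K_\varepsilon*f\to f$ in $L^p$. I expect this to be the main obstacle. One must verify a Mikhlin–Hörmander type condition on the family $\widehat{K_\varepsilon}$, \emph{uniformly in} $\varepsilon$, but adapted to the anisotropic parabolic dilations $(x,t)\mapsto(\delta x,\delta^2 t)$ rather than isotropic ones; this yields a bound $\|K_\varepsilon*f\|_p\le C\|f\|_p$ independent of $\varepsilon$. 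Convergence then follows by combining this uniform bound with the already-established convergence on the dense subclass $J_\alpha(S)$. Establishing the uniform multiplier estimate in the parabolic homogeneity is the delicate analytic point and is precisely where one invokes the framework of Nogin and Rubin \cite{NR} (see also \cite[Sec.\ 9.2]{SK}).
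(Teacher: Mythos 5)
The first thing to say is that the paper does not prove Theorem \ref{thmA.1} at all: it is imported verbatim from Nogin and Rubin \cite{NR} (see also \cite[Theorem 9.24]{SK}) precisely so that it can be \emph{used} in the proof of Theorem \ref{thm2.3}(ii). So there is no in-paper argument to compare against; your proposal has to be judged as a standalone proof. As an outline it follows the standard (and essentially the Nogin--Rubin) strategy, and the symbol computation is correct: the Gaussian integration in $y$ together with the time shift does produce $e^{-zk\tau}$ with $z=|\eta|^2-i\sigma$, the binomial sum collapses to $(1-e^{-z\tau})^l$, the integral $\int_0^\infty(1-e^{-s})^l s^{-1-\alpha}\,ds$ converges exactly when $0<\alpha<l$, and the contour rotation giving the factor $z^\alpha$ is legitimate since $\operatorname{Re}z\ge0$ keeps $(1-e^{-w})^l$ bounded on the relevant sector.

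The genuine gap is at the step you yourself flag as "the main obstacle": the uniform $L^p$ bound $\|K_\varepsilon*f\|_p\le C\|f\|_p$. This is not a technical verification to be outsourced --- it \emph{is} the theorem. Everything before it (pointwise convergence of symbols) is soft; the entire analytic content of the Nogin--Rubin result is that the normalized kernel is well behaved in $L^p$. Deferring that step to "the framework of Nogin and Rubin" is circular, since that framework is exactly what is being proved. If you want to close the gap yourself, the efficient observation is that the multiplier $\widehat{K_\varepsilon}(\eta,\sigma)=\mu(\varepsilon z)$ is a \emph{parabolic dilate of a single function} $\mu$, because $z^{-\alpha}\int_\varepsilon^\infty(1-e^{-z\tau})^l\tau^{-1-\alpha}\,d\tau$ depends on $(\eta,\sigma)$ only through $\varepsilon z$. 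Hence $K_\varepsilon(x,t)=\varepsilon^{-(n+2)/2}K_1(x/\sqrt\varepsilon,\,t/\varepsilon)$, and it suffices to prove the single statement $K_1\in L^1(\mathbb{R}^{n+1})$ with $\iint K_1=1$; then $\{K_\varepsilon\}$ is an approximate identity and $K_\varepsilon*f\to f$ in $L^p$ for every $1\le p<\infty$ by the classical lemma, with no Mikhlin--H\"ormander machinery and no separate uniformity argument. Proving $K_1\in L^1$ still requires real work (one writes $K_1=C\iint_{\tau>1}\tau^{-1-\alpha}e^{-|y|^2/4}\Delta^l_{y,\tau}\Phi_\alpha\,dy\,d\tau$ and estimates the cancellation in the finite difference of $\Phi_\alpha$), and that computation is absent from your sketch. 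Two smaller points: Theorem \ref{thm2.1}(ii), which you invoke for $\widehat u=z^{-\alpha}\widehat f$, is stated only for $f\in L^1$, so for $f\in L^p$, $p>1$, you need a density argument there as well; and note that $u=J_\alpha f$ itself lies in $L^q$ with $1/q=1/p-2\alpha/(n+2)$ rather than in $L^p$ (Remark \ref{rem2.1}), so the assertion that $J^{-\alpha}_\varepsilon u$ is even an $L^p$ function already relies on the cancellation in $\Delta^l_{y,\tau}$ and cannot be taken for granted.
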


\noindent {\bf Acknowledgments}

\noindent The author thanks the anonymous referee for very helpful comments.

\end{document}